\newcommand{\R}{\textnormal{I\kern-0.21emR}}
\newcommand{\N}{\textnormal{I\kern-0.21emN}}
\renewcommand{\geq}{\geqslant}
\renewcommand{\leq}{\leqslant}
\def\e{{\varepsilon}}
\def\tamu{{\theta_{\alpha,\mu}}}
\def\taa{{\theta_{\alpha_1,\alpha_2,\mu}}}
\def\YYint#1#2#3{{\setbox0=\hbox{$#1{#2#3}{\iint}$}
    \vcenter{\hbox{$#2#3$}}\kern-.51\wd0}}
\newtheorem*{theorem*}{Theorem}
\newtheorem{theorem}{Theorem}
\DeclareMathOperator*{\argmax}{arg\,max}
\definecolor{lightgray}{gray}{0.85}
\newtheorem{material}{material}
\newtheorem{proposition}[material]{Proposition}
\newtheorem{definition}[material]{Definition}
\newtheorem{lemma}[material]{Lemma}
\newtheorem{remark}[material]{Remark}
\newtheorem{algo}{Algorithm}
\def\O{{\Omega}}
\def\n{{\nabla}}
\def\p{{\varphi}}
 \newcommandx{\christian}[2][1=]{\todo[linecolor=red,backgroundcolor=red!25,bordercolor=red,#1]{#2}}
 \newcommandx{\laura}[2][1=]{\todo[linecolor=blue,backgroundcolor=blue!25,bordercolor=blue,#1]{#2}}
 \newcommandx{\info}[2][1=]{\todo[linecolor=green,backgroundcolor=green!25,bordercolor=green,#1]{#2}}
 \newcommandx{\improvement}[2][1=]{\todo[linecolor=yellow,backgroundcolor=yellow!25,bordercolor=yellow,#1]{#2}}
  \newcommandx{\biblio}[2][1=]{\todo[linecolor=blue,backgroundcolor=magenta!25,bordercolor=blue,#1]{#2}}
 \numberwithin{equation}{section}
\begin{document}
\title{Spatial ecology, optimal control and game theoretical fishing problems}


\author{Idriss Mazari\footnote{CEREMADE, UMR CNRS 7534, Universit\'e Paris-Dauphine, Universit\'e PSL, Place du Mar\'echal De Lattre De Tassigny, 75775 Paris cedex 16, France, (mazari@ceremade.dauphine.fr).}, \quad Domenec Ruiz-Balet\footnote{ 
Department of Mathematics
Imperial College, South Kensington, London, UK, (d.ruiz-i-balet@imperial.ac.uk).
 }}
\date{\today}

\maketitle

\begin{abstract} 
Of paramount importance in both ecological systems and economic policies are the problems of harvesting of natural resources. A paradigmatic situation where this question is raised is that of fishing strategies. Indeed, overfishing is a well-known problem in the management of live-stocks, as being too greedy may lead to an overall dramatic depletion of the population we are harvesting. A closely related topic is that of Nash equilibria in the context of fishing policies. Namely, two players being in competition for the same pool of resources, is it possible for them to find an equilibrium situation? 
{\color{blue}
And if so, what are the properties of such equilibrium? We will see that there are equilibria for which the total amount of fish harvested is lower than the optimal harvesting, and hence the equilibria produces overfishing. Furthermore, we will see that when the number of players increases, the total amount of fish harvested may decrease to zero. This is a clear example of the \textit{Tragedy of the commons}. 
 Moreover, we will observe that by considering regulations we might avoid these equilibria that lead to overfishing.

}


The goal of this paper is to provide a detailed analysis of these two queries (\emph{i.e} optimal fishing strategies for single-player models and study of Nash equilibria for multiple players games) by using a basic yet instructive mathematical model, the logistic-diffusive equation. In this framework,  the underlying model simply reads $-\mu\Delta \theta=\theta(K(x)-\alpha(x)-\theta)$ where $K$ accounts for natural resources, $\theta$ for the density of the population that is being harvested and $\alpha=\alpha(x)$ encodes either the single player fishing strategy or, when dealing with Nash equilibria, a combination of the fishing strategies of both players. This article consists of two main parts. The first one gives a very fine characterisation of the optimisers for the single-player game where one aims at solving $\sup_\alpha \int_\O \alpha \theta$, under $L^\infty$ and $L^1$ constraints on the fishing strategies $\alpha$. In particular, we show that, depending on the value of these constraints, this optimal control problem may behave like a convex or, conversely, concave problem. We also provide a detailed analysis of the large diffusivity limit of this problem. In the case where two players are involved, we rather write $\alpha$ as $\alpha_1+\alpha_2$ where $\alpha_i$, the fishing strategy of the $i$-th player, also satisfies $L^\infty$ and $L^1$ constraints. Defining $I_1:=\int_\O \alpha_i \theta$ we aim at finding a Nash equilibrium. We prove the existence of Nash equilibria in several different regimes.

In both situations, numerical simulations illustrate our results and allow us to formulate open questions and conjectures.
\end{abstract}

\noindent\textbf{Keywords:} diffusive logistic equation, optimal control, bilinear optimal control, calculus of variations, Nash equilibria, game theory.

\medskip

\noindent\textbf{AMS classification:} 35Q92,49J99,34B15,49N90,91A05. 

\paragraph{Acknowledgment.} This work was started during a visit of D. Ruiz-Balet at CEREMADE. I. Mazari was partially supported by the French ANR Project ANR-18-CE40-0013 - SHAPO on Shape Optimization and by the Project ”Analysis and simulation of optimal shapes - application to life sciences” of the Paris City Hall.
\tableofcontents

\section{Introduction}
\subsection{Scope of the paper}

{\color{blue}
In this paper, we will study an optimal harvesting problem motivated by the ecological managment of wild fisheries and then we explore the ecologial implication of considering a harvesting game.} One of the main ecological threats we currently face is the depletion of fish populations in oceans \cite{BBC2,davies2012extinction,pinsky2011unexpected}. While many factors can be held accountable for this situation, one of the overarching ones is overfishing and, more generally, the poor management of fisheries. The resulting very high strain that is exerted on fishing stocks  puts at risk the biomass \cite{costello2012status,pikitch2012risks}. While it is clear that this overfishing problem may arise when only one population of fishermen is present, the situation can be more dramatic when several populations of fishermen are competing for the same pool of resources. This is an example of the ubiquitous \emph{tragedy of commons} \cite{hardin2009tragedy}: the competition over finite common resources may lead to the extinction of said resources. But not only does this affect the fish population, it also endangers the fishing-based economies of several societies \cite{hamilton2001outport}. Consequently, the future of fisheries and the study of optimal fishing strategies is now a central topic both in the scientific community and in society \cite{BBC50,costello2012status,worm2012future,BBCwaste}.

In the present work, we aim at providing an in-depth analysis of a paradigmatic model of such (over)fishing problems from the perspective of optimal control of spatial ecology models and game theory. Using, as a basic building block, the logistic-diffusive equation, we offer several qualitative results that exemplify the intricate and rich qualitative behaviours of such queries, and provide theoretical illustrations of the aforementioned concepts in the management of fisheries (in particular, of the tragedy of commons).

{\color{blue}
We model the population through a logistic diffusive equation equation in its static regime
\begin{equation}\label{Imain}
 \begin{cases}
    -\mu\Delta \theta= \theta(K(x)-\theta)-\underbrace{\alpha(x)\theta}_{\text{harvested fish}}\quad &\text{ in }\Omega,\\
  \frac{\partial \theta}{\partial \nu } =0 \quad &\text{ on }\partial\Omega,
 \end{cases}
\end{equation}
where $\theta$ is the population, $\alpha \theta$ the fishing rate and $K(x)$ the total capacity or resources of the environment (for further mathematical details see section \ref{singleIntro}). Here, the optimisation problem we seek to understand is the maximisation of the fishing rate:
\begin{equation*}
\max_{\alpha\in L^\infty (\Omega;[0,\kappa])}\int_\Omega \alpha(x)\theta_\alpha(x)dx,
\end{equation*}
where $\kappa>0$ is a positive constant and where by $\theta_\alpha$ we indicate the dependence of $\theta$ with the variable $\alpha$ in \eqref{Imain}. It is worth noting that, in the case $K(x)=1$, the problem becomes fairly simple
\begin{equation*}
\max_{\alpha\in L^\infty (\Omega;[0,\kappa])}\int_\Omega \alpha(x)\theta_\alpha(x)dx=\max_{\alpha\in L^\infty (\Omega;[0,\kappa])}\int_\Omega \theta_\alpha(1-\theta_\alpha)dx=\frac{1}{4}\quad \left(\text{ with }\alpha\equiv \frac{1}{2}\right)
\end{equation*}
However, when we consider a general capacity $K(x)$ the study becomes more intricate.
Now, our objective is to see what is the "\textit{impact}" of considering a game instead of an optimisation problem. Considering two players, the equation becomes:
\begin{equation}\label{Igame}
 \begin{cases}
    -\mu\Delta \theta= \theta(K(x)-\theta)-\underbrace{\alpha_1\theta}_{\text{Player 1}}-\underbrace{\alpha_2\theta}_{\text{Player 2}}\quad &\text{ in }\Omega,\\
  \frac{\partial \theta}{\partial \nu } =0 \quad &\text{ on }\partial\Omega,
 \end{cases}
\end{equation}
where each player wants to optimize their fishing
\begin{equation*}
I_1(\alpha_1,\alpha_2)=\int_\Omega \alpha_1 \theta_{\alpha_1,\alpha_2}dx,\quad I_2(\alpha_1,\alpha_2)=\int_\Omega \alpha_2 \theta_{\alpha_1,\alpha_2}dx,
\end{equation*}
the outcome of one player depends on the strategy of the other player, since, both players have an impact on the total population $\theta$ through equation \eqref{Igame}.

By studying certain Nash equilibria for the harvesting game, we will be able to observe detrimental consequences for the total outcome and for the environment compared with a common policy for harvesting. A pair of strategies $(\alpha_1^*,\alpha_2^*)$ is said to be a Nash equilibria if
\begin{equation}
\alpha_1^*\in \argmax_{\alpha_1} I_1(\alpha_1,\alpha_2^*),\quad \alpha_2^*\in\argmax_{\alpha_2}I_2(\alpha_1^*,\alpha_2)
\end{equation}
In general, Nash equilibria do not necessary exist, the existence of them is one of the main focus of the paper.
Afterwards, we will realise the \textit{impact} of allowing a game by finding  in certain situations, Nash equilibria for which, the total fish harvested for the both players is strictly lower than the common optimisation. Our results point that 
\begin{center}
\emph{By allowing a game we risk not managing efficiently the resources}
\end{center}
Furthermore, when considering $n$ players we will see a more devastating effect. We will see that as the number of players increase, there exist Nash equilibria in which total amount of fish harvested tends to $0$ as the number of players increases. Again, this result points to the following conjectured principle
\begin{center}
\emph{If one increases the number of players in the harvesting game, the total amount harvested may decrease dramatically}
\end{center}
This is a prototypical example of the \textit{tragedy of the commons} \cite{hardin2009tragedy} (see Figure \ref{exampleintro})

\begin{figure}
\centering
\includegraphics[scale=0.4]{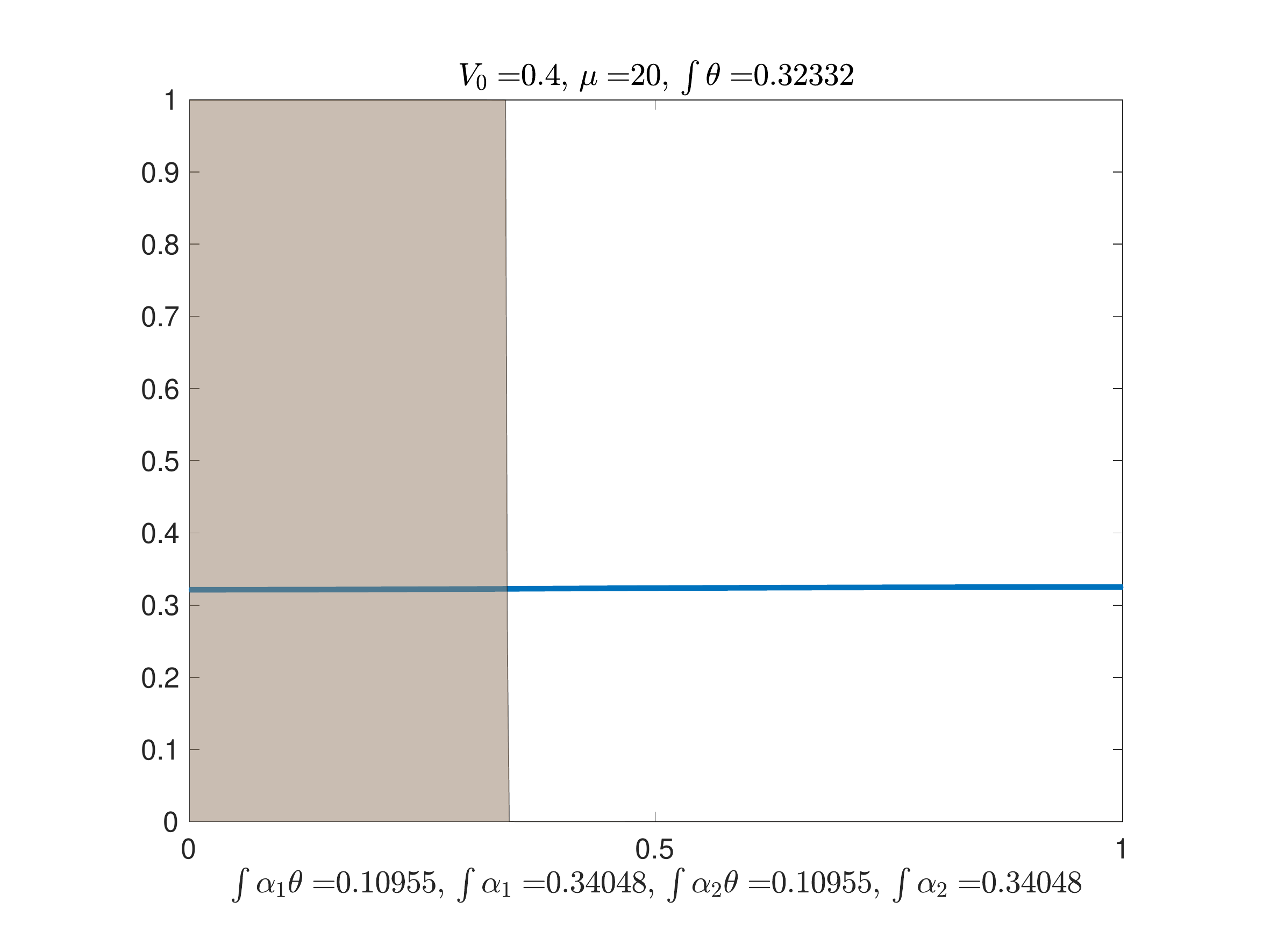}
\caption{A Nash equilibrium with $K(x)=1$. The  grey area is the subgraph of both $\alpha_1$ and $\alpha_2$, the strategy for both players is the same.  The blue line is the associated state for the two fishing strategies. Observe that the total amount harvested is less than $1/4$ which would be the optimal management of the resources for $\alpha\equiv 1/2$.}\label{exampleintro}
\end{figure}

In order to avoid overfishing, typically governments impose regulations on the fishing capacity of the players. Furthermore, players themselves might have limited fishing ability. In this paper we will model this by imposing an integral constraint on $\alpha$,
$\int_\Omega \alpha(x)dx \leq V_0$ or $\int_\Omega \alpha(x)dx = V_0$.

In presence of the relevant integral constraints, we will study the existence of Nash equilibria and observe different properties depending on the integral bound. Which would be the setting for deriving all of our results. In particular, our study suggests that
\begin{center}
\emph{Regulations may help on improving the total amount harvested for all players}
\end{center}

}

Throughout this study, we shall also cover several aspects of optimal control problems that are interesting in their own right, and that belong to a currently very active field of research devoted to the understanding of spatial heterogeneity in population dynamics and, more generally, in the study of spatial ecologoy \cite{BaiHeLi,BHR,DeAngelis2020,heo2021ratio,InoueKuto,KaoLouYanagida,LamLiuLou,LamboleyLaurainNadinPrivat,LiangLou,LiangZhang,LouInfluence,Lou2008,LouNagaharaYanagida,LouYanagida,MazariThese,MazariNadinPrivat,MNPChapter,MRBSIAP,SuTongYang}. Let us give a more mathematical point of view on our contributions:
\paragraph{From the applied mathematics perspective}
In this paper we investigate several \emph{optimal fishing problems} in spatial ecology. The first class of problem corresponds to a  \emph{single fisher problem}, while the other two deal with \emph{multiple players problems}. In the single fisher case, we mostly investigate the influence of the total fishing capacity on the  qualitative features of optimal fishing strategies, while in the other problems we provide some contribution to the existence of Nash equilibria. For multiple player games, we {\color{purple}   } {\color{blue} mostly } consider the case of two players {\color{blue}. However, we will see that our approach can also be used for analysing games with more players. Furthermore, we will discuss the consequences of the existence of such Nash equilibria.} Our theoretical analysis is illustrated by several detailed numerical solutions.

\paragraph{From the optimal control perspective}
Another outlook on the results of this paper is to notice that we are investigating a \emph{non-monotonic bilinear optimal control problem}. By this we mean the following in the case of a single fisher problem: the population of fishes being modelled by its density $\theta$ and a fishing strategy being accounted for by a certain function $\alpha$, the equation features a loss term $-\alpha\theta$, while the player tries to optimise a criterion of the form $\fint \alpha \theta$.  Then it is clear that overfishing will be detrimental to the fisher, as it is going to be detrimental for the overall population. In this paper, we exemplify the shift this creates in the qualitative analysis; for instance, maximisers can saturate certain constraints, or not at all depending on the values of the parameters of the problem.

For further references and discussion, we refer to section \ref{Se:Bib} of the introduction.

\subsection{The single fisher problem}\label{singleIntro}
\paragraph{State equation}
Following the seminal papers \cite{Fisher,KPP}, we model our population of fishes according to the logistic diffusive equation: we assume that the population lives in a domain $\O\subset \R^d$, assumed to be bounded and with a $\mathscr C^2$ boundary. The population is modelled by a population density $\theta$ and depends on the characteristic dispersal rate $\mu>0$ of the species, on the resources available in the domain, which are accounted for by a function $K\in L^\infty(\O)$, and the fishing strategy $\alpha\in L^\infty(\O)$ of the single player. In general, we denote by  $\theta_{K,\alpha,\mu}$ the population density. In the course of this paper, when $K,\alpha$ or $\mu$ are fixed, we may drop certain of the subscripts and only use the notations $\theta_\alpha$ or $\theta_{K,\alpha}$ for instance. Overall, $\theta_{\alpha,\mu}$ solves the following logistic-diffusive equation:
\begin{equation}\label{Eq:MainSingle}
\begin{cases}
-\mu \Delta \theta_{K,\alpha,\mu}-\theta_{K,\alpha,\mu}\left(K(x)-\alpha(x)-\theta_{K,\alpha,\mu}\right)=0&\text{ in }\O\,, 
\\ \frac{\partial \theta_{K,\alpha,\mu}}{\partial \nu}=0&\text{ on }\partial \O\,, 
\\ \theta_{K,\alpha,\mu}\geq 0\,, \neq 0.\end{cases}\end{equation} We refer to \cite[Introduction]{MazariThese} and the references therein for more details on the modelling.
The question of existence and uniqueness of solutions of \eqref{Eq:MainSingle} can be tedious. It is known \cite{BHR} that for fixed $\alpha\,, K$ there exists a unique solution to \eqref{Eq:MainSingle} if and only if the first eigenvalue of the operator $-\mu \Delta -(K-\alpha)$ is negative. Since we work on optimisation problems, it is easier to ensure the existence and uniqueness of the solution for any control. As the first eigenvalue is bounded from above \cite{DockeryHutsonMischaikowPernarowskiEvolution} by $\fint_\O\left( \alpha-K\right)$ we will simply work with controls $\alpha$ satisfying
\begin{equation}\label{Contr:Integ} 0<\fint_\O \alpha< \fint_\O K.\end{equation} Under these conditions, classical results from \cite{BHR,31806255e7f648d5b65ff02c30c4f539} guarantee the existence and uniqueness of a solution of \eqref{Eq:MainSingle}. 

We introduce a parameter $K_0\in(0;1)$ and always assume that $K\in \mathcal K(\O)$, where $\mathcal K(\O)$ is defined as 

\begin{equation}\label{Eq:AdmK} \mathcal K(\O):=\left\{K\in L^\infty(\O)\,, 0\leq K\leq 1\,, \fint_\O	 K=K_0\right\},\end{equation}
where, for any $f\in L^1(\O)$ we use the notation 
\[ \fint_\O f=\frac1{\mathrm{Vol}(\O)}\fint_\O f.\]

\paragraph{Single player functional}
The functional to optimise in the single player case is the \emph{total fishing output}
\[
J_\mu:\alpha \mapsto \fint_\O \alpha \theta_{\alpha,\mu},\]and the relevant optimisation problem is 
\[
\sup_{\alpha}J_\mu(\alpha).\] Of course, we need to specify which admissible fishing strategies $\alpha$ we consider.

\paragraph{Admissible controls}
Beyond the integral condition \eqref{Contr:Integ}, we  enforce a pointwise bound 
\[0\leq \alpha\leq  \kappa\] where $\kappa>0$ is a fixed parameter: a single player has a limited fishing capacity at any given spots.

Second, we need to implement a global, $L^1$ constraint (the player has a globally limited fishing ability); in order to still satisfy \eqref{Contr:Integ} we fixe a parameter $V_0\in (0;K_0)$ and we assume that either all controls satisfy 
\[
\fint_\O \alpha \leq V_0 \quad \text{(Inequality constraint)}\] or, on the other hand, that 
\[ \fint_\O \alpha=V_0\quad \text{(equality constraint)}.\]

Overall, we thus define, for these two fixed paramers $\kappa\,, V_0$, the two admissible classes of controls 
\begin{equation}\label{Adm:Ineq} \mathcal M_\leq(\kappa,V_0)=\left\{\alpha \in L^\infty(\O)\,, 0\leq \alpha\leq \kappa \text{ a.e., } \fint_\O\alpha \leq V_0\right\}\end{equation}
and 

\begin{equation}\label{Adm:Eq} \mathcal M_=(\kappa,V_0)=\left\{\alpha \in L^\infty(\O)\,, 0\leq \alpha\leq \kappa \text{ a.e., } \fint_\O \alpha=V_0\right\}\end{equation}

Working in one or the other of these admissible classes changes the features of the problem drastically. This is related to the  problem of overfishing: as we shall see throughout the proofs, depending on the value of $V_0$, the functional $J_\mu$ may be increasing (in the sense that $\alpha_1\leq \alpha_2\Rightarrow J_\mu(\alpha_1)\leq J_\mu(\alpha_2)$), in which case optimisers for the problem $\sup_{m\in \mathcal M_=(\kappa,v_0)} J_\mu(\alpha)$ are also optimisers for the problem $\sup_{m\in \mathcal M_\leq(\kappa,v_0)} J_\mu(\alpha)$, or loose this monotonicity, in which case the optimisers for the inequality case are strictly better than optimisers for the equality constraint: $\sup_{m\in \mathcal M_=(\kappa,v_0)} J_\mu(\alpha)<\sup_{m\in \mathcal M_\leq(\kappa,v_0)} J_\mu(\alpha)$. This is a first major difference between between the fishing problem and the problem of optimisation of the total population size, where the monotonicity of the functional is a stepping stone for further qualitative analysis of optimisers, see section \ref{Se:Bib}.

\paragraph{The main problem}
Thus, the first two optimisation problems to be considered  here and that are the main foci of the present contribution are:
\begin{equation}\label{Pv:1Ineq}\tag{$\bold{P}_{\leq,V_0}^{\mathrm{single}}$} \fbox{$\displaystyle \sup_{\alpha \in \mathcal M_\leq (\kappa,V_0)} J_\mu(\alpha)$}\end{equation} and 

\begin{equation}\label{Pv:1Eq}\tag{$\bold{P}_{=,V_0}^{\mathrm{single}}$} \fbox{$\displaystyle \sup_{\alpha \in \mathcal M_= (\kappa,V_0)} J_\mu(\alpha)$}\end{equation} 
For these two problems, we can provide a fine analysis in the case of low fishing abilities ($V_0\ll 1$) orin the large diffusivity asymptotic regime $\mu \to \infty$.  In particular, we will show that, in general (\emph{i.e.} for a fixed diffusivity), if $V_0\ll 1$, $J_\mu$  is increasing on $\mathcal M_\leq(\kappa,V_0)$ (Theorem \ref{Th:MonotonicityTrue}), and also concave (Theorem \ref{Th:Concave}) while, in the large diffusivity case $\mu\to \infty$, we can attain an explicit description of optimal strategies (Proposition \ref{Pr:Large0}, Theorem \ref{Th:AsymptoticSingle}).

\paragraph{A "large fishing ability" model to showcase the complexity of fishing problems }

To exemplify, however, the breadth of behaviours such fishing problems can display, we also propose a deep exploration of another asymptotic case, that of \emph{large fishing abilities}.

Let us make this more precise. What we mean here is that the fishing strategy is going to be a small perturbation  of the resources distribution $K$, \emph{i.e.} that any fishing strategy writes $\alpha=K+\delta m$ for a small parameter $\delta>0$. 

This leads us to introduce the auxiliary classes
\[ \mathcal N_{\leq}(\O):=\left\{m\in L^\infty(\O)\,, \Vert m\Vert_{L^\infty(\O)}\leq 1\,,-m_1\leq \fint_\O m\leq=-m_0\right\}\]
and 
\[ \mathcal N_{=}(\O):=\left\{m\in L^\infty(\O)\,, \Vert m\Vert_{L^\infty(\O)}\leq 1\,,\fint_\O m=-m_0\right\}\]  where $m_0$ is a fixed volume constraint, $m_1>-1$ and we define, for any $m\in \mathcal N(\O)$ and any $\delta>0$, the fishing strategy 
\[ \alpha_{\delta,m}:=K+\delta m.\] The parameter $\delta$ is destined to be small, so we are essentially, through this reparameterisation, assuming that fishing strategies  are close to natural resources distribution, and essentially lead to killing all the population off.

\begin{remark}
For any $m\in \mathcal N(\O)$, the zones $\{m<0\}$ correspond to zones where we are not exhausting the natural resources modelled by $K$.
\end{remark}

We define, for any $\delta>0$, the map 
\[ \overline J_{\delta,\mu}:\mathcal N(\O)\ni m\mapsto \fint_\O \alpha_{\delta,m}\theta_{\alpha_{\delta m},\mu}.\] The  related optimisation problems are
\begin{equation}\label{Pv:1IneqLargeFishing}\tag{$\bold{Q}_{\leq,\delta}^{\mathrm{single}}$} 
\fbox{$\displaystyle \sup_{m\in \mathcal N_\leq(\O)}\overline J_{\delta,\mu}(m)$}\end{equation} and 
\begin{equation}\label{Pv:1EqLargeFishing}\tag{$\bold{Q}_{=,\delta}^{\mathrm{single}}$} 
\fbox{$\displaystyle \sup_{m\in \mathcal N_=(\O)}\overline J_{\delta,\mu}(m)$}\end{equation} 
While these two problems seem extremely related to our original formulations \eqref{Pv:1Ineq}-\eqref{Pv:1Eq} the qualitative behaviours of \eqref{Pv:1IneqLargeFishing}-\eqref{Pv:1EqLargeFishing} are very different. For instance, we show in Theorem \ref{Th:MonotonicityTrue} that when $\delta\ll1$ the functional $\overline J_{\delta,\mu}$ is not monotonic, and that it even behaves like a  convex function, in the sense that its maximisers are extreme points of the admissible set (see Theorem \ref{Th:Convex}).

\paragraph{Structure of the statement of the results for single fisher models}
While it would seem natural to divide our presentation of the results in two batches, one devoted to \eqref{Pv:1Eq}-\eqref{Pv:1Ineq} and another to \eqref{Pv:1EqLargeFishing}-\eqref{Pv:1IneqLargeFishing}, the coherence of the methods of proofs used prompts us to rather present them in the following order:

\begin{enumerate}
\item \underline{Monotonicity properties:} in the first two theorems, Theorems \ref{Th:MonotonicityTrue} and \ref{Th:MonotonicityFalse}, we investigate the monotonicity of the functionals $J_\mu$ and $\overline J_{\delta,\mu}$. In Theorem \ref{Th:MonotonicityTrue} we show that \eqref{Pv:1Ineq} and \eqref{Pv:1Eq} coincide when $V_0\ll1$.  In Theorem \ref{Th:MonotonicityFalse} we prove that when $\delta\ll1$ the problems \eqref{Pv:1IneqLargeFishing} and \eqref{Pv:1EqLargeFishing} do not coincide. While such results can be obtained in a very straightforward manner when we consider the case of a constant resources distribution $K$ (see in particular Remark \ref{Re:KConstant}), it is not immediate at all in the case of varying $K$. The interest of Theorem \ref{Th:Concave} is twofold: first, it exemplifies the qualitative change of behaviour of the functional $J_\mu$ when the volume constraint is perturbed. Second, it is an essential building block to obtain concavity properties for the functional and, therefore, to derive the existence of Nash equilibria when we will, in the second part of the paper, study multiple players games.
\item\underline{Concavity and convexity properties:} In  Theorems \ref{Th:Concave}-\ref{Th:Convex}, we focus on the problems with equality constraints \eqref{Pv:1Eq}-\eqref{Pv:1EqLargeFishing}. We first show in Theorem \ref{Th:Concave} that, if $V_0$ is small enough and if $\O$ is one-dimensional then, regardless of the resources distribution $K$, $J_\mu$ is a concave functional, and we identify the maximising controls for particular values of $V_0$ or for particular resources distribution $K$. This relies on very fine properties of the one-dimensional logistic diffusive equation previously investigated in \cite{BaiHeLi}. We prove the same result in higher dimensions, provided $K$ remains close to a constant. We show in particular that if $K$ is constant, then the maximising controls are constant as well. Then, in Theorem \ref{Th:Convex} we show that, if $\delta>0$ is small enough, the functional $J_{\delta,\mu}$ behaves, conversely, like a convex function from the point of view of optimisation in $\mathcal N_=(\O)$:  all solutions of \eqref{Pv:1EqLargeFishing} are extreme points of the admissible sets and so they write $m^*=\kappa \mathds 1_{E^*}$ for some suitable subset $E^*$ of $\O$.
\item\underline{Precised behaviour in asymptotic regimes:} finally, to conclude the theoretical contributions to single player games, we offer an in-depth analysis of the large diffusivity limit $\mu \to \infty$ of the optimisation problem \eqref{Pv:1Eq}. Building on techniques of \cite{Mazari2020}, we give explicit maximisers in the one-dimensional case; we refer to Theorem \ref{Th:AsymptoticSingle}. Similarly, this result will be used to exhibit Nash equilibria in two-players games. 

\end{enumerate}

All these results are gathered in subsection \ref{Se:QualitativeSingle}.

This part concludes with a presentation, in subsection \ref{Se:NumericsSingle}, of the numerical algorithm implemented to obtain numerical simulations that will be presented and commented in section \ref{Se:NumericsSingleCommented}.

\paragraph{Remark on the techniques used} Throughout this first part of the paper, especially for Theorems \ref{Th:MonotonicityTrue}-\ref{Th:Concave}-\ref{Th:Convex} one of the key ingredient is the second-order technique introduced in \cite{Mazari2021} to tackle the problem of optimising the total population size. While this method proved fruitful in a variety of other situations \cite{M2021,MPRobin}, it is here impossible to apply directly, and it needs to be coupled with some fine analytical study of the functions at hand. The characterisation of optimisers in the large diffusivity limit is on the obtained using rearrangement-like arguments and Talenti inequalities. Specifically, we shall use some results of \cite{Langford} and of \cite{Mazari2020}, the latter being used solely to derive the limit model.
\paragraph{Terminology: bang-bang functions}
We shall often refer in this paper to "bang-bang" functions. They are simply admissible controls that write 
\[ \alpha=\kappa\mathds 1_E.\] Such bang-bang functions are known to be important in the optimal control of reaction-diffusion equations (see in particular section \ref{Se:Bib} of this introduction), and, geometrically, are extreme points of the convex set $\mathcal M_=(\kappa,V_0)$.

\subsection{Qualitative properties for single player games: general diffusivities}\label{Se:QualitativeSingle}
\paragraph{Monotonicity of the fishing output}
We begin with the monotonicity of the fishing output functional and explain how the volume constraint may have an influence on the increasing character of $J_\mu$. Of course, this is a theoretical, optimal control formulation of the overfishing problem.  Before we state our result, let us explain in the following remark that such a result is very much expected when working in homogeneous environments ($K\equiv 1$) where explicit computations allow for an explicit characterisation of maximisers; this shows that monotonicity is not the general rule.

\begin{remark}[A standard example with loss of monotonicity]\label{Re:KConstant}
A simple yet instructive case to exemplify the loss of monotonicity is given by the case $K\equiv K_0$. In this case, for any strategy $\alpha\in \mathcal M_{\leq}(\kappa,V_0)$, $\theta_{\alpha,u}$ solves 
\[ -\mu \Delta \theta_{\alpha,\mu}-\theta_{\alpha,\mu}\left(K_0-\theta_{\alpha,\mu}\right)=\alpha \theta_{\alpha,\mu}.\] As $\theta_{\alpha,\mu}$ satisfies Neumann boundary conditions, this entails 
\[ J_\mu(\alpha)=\fint_\O \alpha \theta_{\alpha,\mu}=\fint_\O \tamu(K_0-\tamu).\] Besides, if we assume that $\kappa<2$, so that $\Vert 1-\alpha\Vert_{L^\infty}\leq 1$, the maximum principle implies $\tamu\leq 1$ almost everywhere. As the maximiser of $\p:x\mapsto x(K_0-x)$ on $[K_0;1]$ is reached at $x=\frac12$ it follows that 
\[J_\mu(\alpha)\leq \p\left(\frac12\right),\] with equality if, and only if, $\tamu\equiv \frac{K_0}2$. However, $\tamu=\frac{K_0}2$ if and only if $\alpha \equiv\frac{K_0}2$. We thus obtain the following conclusion: for any $V_0\geq \frac12$, $\alpha^*\equiv \frac{K_0}2$ is the unique maximiser of $J_\mu$ on $\mathcal M_\leq (\kappa,V_0)$. In particular, if $V_0>\frac{K_0}2$, the volume constraint is not saturated in \eqref{Pv:1Ineq}.
\end{remark}
We now state our main theorem:
\begin{theorem}\label{Th:MonotonicityTrue}
 Let $\kappa>0$ be fixed. There exists $\e_1>0$ such that, if $V_0\in (0;\e_1)$, the map $\alpha \mapsto J_\mu(\alpha)$ is monotonic on $\mathcal M_\leq(\kappa,V_0)$:
\[ \alpha_1\leq \alpha_2\Rightarrow J_\mu (\alpha_1)\leq J_\mu(\alpha_2).\] As a consequence, any solution $\alpha^*$ of \eqref{Pv:1Ineq} satisfies
\[ \fint_\O \alpha^*=V_0.\]
\end{theorem}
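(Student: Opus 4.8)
The plan is to prove monotonicity through the sign of the Gâteaux derivative of $J_\mu$ and to reduce that sign to a pointwise bound on an adjoint state. First I would establish the differentiability of $\alpha\mapsto\theta_{\alpha,\mu}$ and compute, for an admissible nonnegative increment $h$, the linearised state $\dot\theta$ solving
\[ -\mu\Delta\dot\theta-(K-\alpha-2\theta_{\alpha,\mu})\dot\theta=-\theta_{\alpha,\mu}\,h\quad\text{in }\O,\qquad \partial_\nu\dot\theta=0. \]
Introducing the adjoint state $p=p_{\alpha,\mu}$ solving
\[ -\mu\Delta p+(\alpha+2\theta_{\alpha,\mu}-K)p=\alpha\quad\text{in }\O,\qquad \partial_\nu p=0, \]
a standard integration by parts (using the Neumann conditions) yields the clean identity
\[ \dot J_\mu(\alpha)[h]=\fint_\O h\,\theta_{\alpha,\mu}\,(1-p). \]
Since $h\geq0$ and $\theta_{\alpha,\mu}>0$ on $\overline\O$ by the strong maximum principle, the whole question reduces to showing $p\leq1$ (in fact $p<1$) almost everywhere.

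Second, I would pin down the sign and size of $p$. The operator $L_0:=-\mu\Delta+(2\theta_{0,\mu}-K)$ obtained at $\alpha=0$ satisfies $L_0\theta_{0,\mu}=\theta_{0,\mu}^2>0$; as $\theta_{0,\mu}>0$, this exhibits a positive strict supersolution and forces the principal Neumann eigenvalue $\lambda_1(L_0)>0$. Hence $L_0$, and by perturbation the operators $-\mu\Delta+(\alpha+2\theta_{\alpha,\mu}-K)$ for $\alpha$ of small mass, are invertible and satisfy the maximum principle, so that $p\geq0$ (the datum $\alpha$ being nonnegative). The crux is then to prove that $\|p\|_{L^\infty}\to0$ as $V_0\to0$, uniformly over $\alpha\in\mathcal M_\leq(\kappa,V_0)$.

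Third comes the quantitative smallness, which I expect to be the main obstacle. The difficulty is that $\alpha$ is only small in $L^1$ (its average is $\leq V_0$) while $\|\alpha\|_{L^\infty}\leq\kappa$ is fixed, so $L^1$ data cannot feed directly into an $L^\infty$ bound. The remedy is interpolation: since $0\leq\alpha\leq\kappa$ one has $\fint_\O\alpha^q\leq\kappa^{q-1}\fint_\O\alpha\leq\kappa^{q-1}V_0$, so that $\|\alpha\|_{L^q}\leq C\,V_0^{1/q}\to0$. Choosing $q>d/2$, the uniform elliptic resolvent estimate $\|p\|_{W^{2,q}}\leq C\|\alpha\|_{L^q}$ (the constant being uniform thanks to a uniform lower bound on $\lambda_1$) together with the embedding $W^{2,q}\hookrightarrow L^\infty$ gives $\|p\|_{L^\infty}\leq C\,V_0^{1/q}$. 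To make the constant genuinely uniform I first need $\theta_{\alpha,\mu}\to\theta_{0,\mu}$ in $C(\overline\O)$ uniformly as $V_0\to0$, which I would obtain from the continuity of $\alpha\mapsto\theta_{\alpha,\mu}$ via a compactness–contradiction argument: any sequence with $\fint_\O\alpha_n\to0$ converges to $0$ in every $L^q$, whence $\theta_{\alpha_n,\mu}\to\theta_{0,\mu}$, keeping $\lambda_1$ bounded away from zero and resolving the apparent circularity between ``state close to $\theta_{0,\mu}$'' and ``operator invertible''.

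Finally, fixing $\e_1$ so that $\|p_{\alpha,\mu}\|_{L^\infty}<1$ for every $\alpha\in\mathcal M_\leq(\kappa,V_0)$ whenever $V_0<\e_1$, the derivative formula gives $\dot J_\mu(\alpha)[h]=\fint_\O h\,\theta_{\alpha,\mu}(1-p)>0$ for every admissible $h\geq0$ with $h\not\equiv0$. Integrating along the segment $t\mapsto\alpha_1+t(\alpha_2-\alpha_1)$, which remains in the convex set $\mathcal M_\leq(\kappa,V_0)$, yields $J_\mu(\alpha_1)\leq J_\mu(\alpha_2)$ whenever $\alpha_1\leq\alpha_2$, with strict inequality if $\alpha_1\neq\alpha_2$. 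The saturation of the constraint is then immediate: if a solution $\alpha^*$ of \eqref{Pv:1Ineq} had $\fint_\O\alpha^*<V_0$, there would remain room to add a nonnegative increment within $\mathcal M_\leq(\kappa,V_0)$, strictly increasing $J_\mu$ and contradicting optimality; hence $\fint_\O\alpha^*=V_0$.
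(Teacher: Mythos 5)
Your proposal is correct and follows essentially the same route as the paper: reduce monotonicity to the sign of $1-p_\alpha$ via the adjoint-state formula for $\dot J_\mu$, obtain a uniform lower bound on the principal eigenvalue of $-\mu\Delta-(K-\alpha-2\theta_\alpha)$ by a compactness argument showing $\theta_\alpha\to\overline\theta$ as $V_0\to0$, and deduce that $\Vert p_\alpha\Vert_{L^\infty}\to0$ uniformly. The only cosmetic difference is your quantitative step (the interpolation $\Vert\alpha\Vert_{L^q}\leq C V_0^{1/q}$ fed into a uniform $W^{2,q}\hookrightarrow L^\infty$ resolvent bound, which even yields an explicit rate), where the paper instead tests the adjoint equation with $p_\alpha$ to get an $L^2$ bound of order $\sqrt{V_0}$ and then bootstraps and passes to the limit by compactness.
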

Our second theorem deals with \eqref{Pv:1IneqLargeFishing}-\eqref{Pv:1EqLargeFishing}:
\begin{theorem}\label{Th:MonotonicityFalse} There exists $\delta_1>0$ such that, for any $ \delta\in (0;\delta_1)$, the functional $\overline J_{\delta,\mu}$ is not increasing on $\mathcal N_\leq(\O)$; furthermore, for any solution $\alpha^*$ of \eqref{Pv:1IneqLargeFishing}, there holds 
\[ \fint_\O \alpha^*<V_0.\]
\end{theorem}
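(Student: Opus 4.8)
The plan is to reduce both assertions to a sharp asymptotic expansion of $\overline J_{\delta,\mu}$ as $\delta\to 0$, uniform over $\mathcal N_\leq(\O)$, and then to read them off from the leading term. Throughout write $\theta=\theta_{\alpha_{\delta,m},\mu}$ and $\overline m:=-\fint_\O m\in[m_0,m_1]$. Since $\alpha_{\delta,m}=K+\delta m$, the state equation reads $-\mu\Delta\theta=\theta(-\delta m-\theta)$, so after the rescaling $\psi:=\theta/\delta$ one gets
\[-\mu\Delta\psi=-\delta\,\psi(\psi+m)\ \text{ in }\O,\qquad \partial_\nu\psi=0\ \text{ on }\partial\O.\]
First I would record a priori bounds. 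The maximum principle applied to the logistic equation gives $0<\theta\leq\sup_\O(K-\alpha_{\delta,m})=\delta\sup_\O(-m)\leq\delta$, hence $0<\psi\leq 1$. For a matching lower bound I would use the principal eigenpair $(\lambda_1,\varphi_1)$ of $-\mu\Delta+\delta m$ with Neumann conditions (normalised by $\fint_\O\varphi_1=1$): by the bound recalled in the introduction, $\lambda_1\leq\fint_\O(\alpha_{\delta,m}-K)=-\delta\overline m<0$, while $\varphi_1\to 1$ uniformly as $\delta\to0$; then $\tfrac{|\lambda_1|}{2}\varphi_1$ is a subsolution of the logistic problem, so by comparison $\theta\geq c\,\delta$ with $c=c(m_0)>0$ \emph{uniformly} in $m$. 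This uniform lower bound is the crucial point, as it excludes the degenerate branch $\psi\to 0$.

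With these bounds, elliptic regularity applied to $-\mu\Delta\big(\psi-\fint_\O\psi\big)=-\delta\psi(\psi+m)$, whose right-hand side is $O(\delta)$ in $L^\infty$, yields $\Vert\psi-\fint_\O\psi\Vert_{L^\infty}\leq C\delta$ uniformly. Integrating the equation over $\O$ gives the identity $\int_\O\psi(\psi+m)=0$; inserting $\psi=\fint_\O\psi+O(\delta)$ and using the uniform lower bound on $\fint_\O\psi$ selects the nontrivial root and forces $\fint_\O\psi=\overline m+O(\delta)$, whence $\Vert\psi-\overline m\Vert_{L^\infty}\leq C\delta$ uniformly. Expanding the cost as $\overline J_{\delta,\mu}(m)=\delta\fint_\O K\psi+\delta^2\fint_\O m\psi$, I obtain
\[\overline J_{\delta,\mu}(m)=\delta\,K_0\,\overline m+O(\delta^2)\qquad\text{uniformly on }\mathcal N_\leq(\O).\]

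For the non-monotonicity I would sidestep the expansion with an explicit example: if $m\equiv-c$ is constant then $\theta\equiv\delta c$ solves the equation exactly, so $\overline J_{\delta,\mu}(-c)=\delta c\,K_0-\delta^2c^2$. Taking the admissible constant controls $m_a\equiv-m_1\leq m_b\equiv-m_0$ (using $m_0<m_1\le 1$), a direct computation gives
\[\overline J_{\delta,\mu}(m_a)-\overline J_{\delta,\mu}(m_b)=\delta(m_1-m_0)\big(K_0-\delta(m_1+m_0)\big),\]
which is strictly positive once $\delta<K_0/(m_1+m_0)$. Thus $m_a\leq m_b$ while $\overline J_{\delta,\mu}(m_a)>\overline J_{\delta,\mu}(m_b)$, so $\overline J_{\delta,\mu}$ is not increasing.

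Finally, for the saturation statement let $m^*$ be any maximiser of \eqref{Pv:1IneqLargeFishing}, whose existence is standard from weak-$\ast$ compactness of $\mathcal N_\leq(\O)$ and continuity of the solution map. Comparing with the constant competitor $m_a\equiv-m_1$ and applying the uniform expansion twice,
\[\delta K_0\,\overline{m^*}+O(\delta^2)=\overline J_{\delta,\mu}(m^*)\geq\overline J_{\delta,\mu}(m_a)=\delta K_0 m_1-\delta^2m_1^2,\]
which forces $\overline{m^*}\geq m_1-C\delta$. Since $m_1>m_0$, choosing $\delta_1$ so small that $C\delta_1<m_1-m_0$ gives $\overline{m^*}>m_0$ for all $\delta<\delta_1$, i.e. $\fint_\O m^*<-m_0$, equivalently $\fint_\O\alpha^*=K_0+\delta\fint_\O m^*<K_0-\delta m_0=V_0$. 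I expect the uniform lower bound $\theta\geq c\delta$ (the branch-selection step) to be the main obstacle, since it is exactly what prevents the rescaled state from collapsing to zero and is the only place where the sign of $\lambda_1$ and the nondegeneracy of the logistic bifurcation genuinely enter.
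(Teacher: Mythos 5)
Your proof is correct, and it reaches both conclusions by a genuinely different route from the paper. The paper works at the level of derivatives: it writes $\dot{\overline J}_{\delta,\mu}(m)[h]=\delta\fint_\O(1-q_{\delta,m})\theta_{K+\delta m}h$ for the adjoint state $q_{\delta,m}$, proves the expansions $\theta_{K+\delta m}=\delta m_0+O(\delta^2)$ and $q_{\delta,m}=\frac{K_0}{m_0\delta}+o(1/\delta)$ by rescaling and invoking the large-diffusivity analysis of \cite{Mazari2020}, and concludes that $1-q_{\delta,m}<0$ for $\delta$ small, so that $\overline J_{\delta,\mu}$ is in fact \emph{strictly decreasing}; both the non-monotonicity and the non-saturation then follow at once from the mean value theorem. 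You instead argue at the level of values: your leading-order expansion of the state coincides with the paper's (and your self-contained derivation via the subsolution $\frac{|\lambda_1|}{2}\varphi_1$, the $L^\infty$ oscillation bound, and the integral identity $\int_\O\psi(\psi+m)=0$ is a legitimate replacement for the citation of \cite{Mazari2020} -- it even corrects a small imprecision, since on $\mathcal N_\leq(\O)$ the leading term is $-\delta\fint_\O m$ rather than $\delta m_0$, which only agree on $\mathcal N_=(\O)$), but you then dispose of the non-monotonicity with the exact constant solutions $\theta\equiv\delta c$ (no derivatives or adjoints needed at all) and of the non-saturation by comparing the value at a maximiser with the value at the competitor $m_a\equiv-m_1$ via the uniform expansion $\overline J_{\delta,\mu}(m)=\delta K_0\overline m+O(\delta^2)$. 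What each approach buys: the paper's adjoint computation yields the stronger pointwise statement that the switch function is everywhere negative (strict decrease under any pointwise increase of $\alpha$), which is reused later for the convexity analysis of Theorem \ref{Th:Convex}; your argument is more elementary and even localises the optimiser, giving $\fint_\O m^*\leq -m_1+C\delta$, but it explicitly requires $m_1>m_0$ (an assumption that is anyway implicit in the statement, since otherwise $\mathcal N_\leq=\mathcal N_=$ and the non-saturation claim is vacuous). The one step you should make sure is airtight is the uniform lower bound $\theta\geq c\delta$, which, as you correctly identify, is the branch-selection step; your comparison argument needs the standard fact that the unique positive solution of the logistic equation dominates any positive subsolution, which follows from the sub/supersolution construction together with uniqueness.
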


As was explained for example in \cite{Mazari2021}, the monotonicity is intimately linked to pointwise properties of optimisers. In \cite{Mazari2021,M2021} it is shown that for certain bilinear control problems, the monotonocity of the functional entails that optimisers are extreme points of the convex set under consideration, the aforementioned "bang-bang" functions. Here, we show related results, in that we obtain concavity and convexity-like properties. The first theorem deals with the "low fishing capacity" limit.

\begin{theorem}\label{Th:Concave}
\begin{enumerate}
\item Assume $\O=(0;1)$ i.e. that we are working in the one-dimensional case. There exists $\e_2>0$ such that, for any $V_0\in (0;\e_2)$, the map $J_\mu$ is strictly concave on $\mathcal M_\leq(\kappa,V_0)$. If $K$ is constant, and if $V_0\in (0; \e_2)$, the solution of \eqref{Pv:1Eq} and of \eqref{Pv:1Ineq} is $\overline \alpha \equiv \frac{V_0}{\mathrm{Vol}(\O)}.$
\item In any dimension $d$, there exists $\e_2>0$ and $\e_3>0$ such that for any $V_0\in (0;\e_2)$ and for any $K\in \mathcal K(\O)$ such that, defining $\overline K:=K_0$, 
\[\Vert K-\overline K\Vert_{L^1(\O)}\leq \e_3\] then the map $J_\mu$ is strictly concave on $\mathcal M_\leq(\kappa,V_0)$. If $K$ is constant, and if $V_0\in (0; \e_2)$, the solution of \eqref{Pv:1Eq} and of \eqref{Pv:1Ineq} is $\overline \alpha \equiv V_0.$

\end{enumerate}
\end{theorem}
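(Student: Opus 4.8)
The plan is to establish strict concavity through a second-order analysis of $J_\mu$, starting from a convenient reformulation. Integrating the state equation \eqref{Eq:MainSingle} over $\O$ and using the Neumann condition gives, for $\theta:=\theta_{\alpha,\mu}$,
\[ J_\mu(\alpha)=\fint_\O \alpha\theta=\fint_\O K\theta-\fint_\O\theta^2. \]
In this form the concavity in $\theta$ is transparent, and the only obstruction to concavity in $\alpha$ is the nonlinearity of $\alpha\mapsto\theta$. Differentiating twice along an admissible segment $\alpha_t=\alpha+th$ (with $\alpha,\alpha+h\in\mathcal M_\leq(\kappa,V_0)$), and writing a dot for $\frac{d}{dt}$ at $t=0$, the linearised states solve $\mathcal L\dot\theta=-\theta h$ and $\mathcal L\ddot\theta=-2\dot\theta h-2\dot\theta^2$, where $\mathcal L:=-\mu\Delta-(K-\alpha-2\theta)$. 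Since $\theta>0$ solves \eqref{Eq:MainSingle}, the operator $\widetilde{\mathcal L}:=-\mu\Delta-(K-\alpha-\theta)$ has principal eigenvalue $0$ with eigenfunction $\theta$, so $\mathcal L=\widetilde{\mathcal L}+\theta$ has positive principal eigenvalue; this yields coercivity, solvability of the above equations, and smoothness of $\alpha\mapsto\theta$ by the implicit function theorem.

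Introducing the adjoint $p:=\mathcal L^{-1}(K-2\theta)$ and integrating by parts, I obtain
\[ \ddot J_\mu=-2\fint_\O p\,\dot\theta\,h-2\fint_\O(p+1)\dot\theta^2. \]
The pivotal remark is that $p+1$ is small for \emph{every} $K$ and in \emph{every} dimension: since $\mathcal L(-1)=K-\alpha-2\theta$, the function $r:=p+1$ solves $\mathcal L r=\alpha$, whence $r=\mathcal L^{-1}\alpha$ is controlled by $\alpha$ through elliptic estimates and becomes small as $V_0\to0$ (directly via $H^1\hookrightarrow L^\infty$ in dimension one, and via $W^{2,q}$ regularity together with $\Vert\alpha\Vert_{L^q}\leq\kappa^{1-1/q}\Vert\alpha\Vert_{L^1}^{1/q}$ in higher dimension). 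Substituting $p=-1+r$ and eliminating $h$ through $\theta h=-\mathcal L\dot\theta$, this becomes $\ddot J_\mu=-2\fint_\O\theta^{-1}\dot\theta\,\mathcal L\dot\theta-2\fint_\O r\,\dot\theta\,h-2\fint_\O r\,\dot\theta^2$, the last two terms being remainders carrying the small factor $r$.

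The core of the argument is the negative-definiteness of the leading term. Using the ground-state substitution $\dot\theta=\theta w$ (licit since $\theta>0$) together with $\widetilde{\mathcal L}\theta=0$, it reduces to
\[ -2\fint_\O\theta^{-1}\dot\theta\,\mathcal L\dot\theta=-2\mu\fint_\O\theta|\nabla w|^2-\fint_\O\theta\,(3\theta-K+\alpha)\,w^2, \]
so strict concavity follows once the quadratic form $w\mapsto 2\mu\fint_\O\theta|\nabla w|^2+\fint_\O\theta(3\theta-K+\alpha)w^2$ is positive definite and dominates the $r$-remainders. Constant test functions never violate positivity: in the limit $\alpha\to0$ one has $\fint_\O\theta_0(3\theta_0-K)=2\fint_\O\theta_0^2>0$ (using $\fint_\O K\theta_0=\fint_\O\theta_0^2$, where $\theta_0:=\theta_{K,0,\mu}$). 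This is where the two regimes part, and where the main difficulty lies. In higher dimension, under $\Vert K-K_0\Vert_{L^1(\O)}\leq\e_3$ and $V_0\leq\e_2$, the potential $3\theta-K+\alpha$ is uniformly close to $2K_0>0$, so the form is trivially coercive and the estimate closes. In dimension one, however, the potential $3\theta-K$ may change sign (for instance when $K_0$ is small and $K$ is peaked), so its positivity is genuinely delicate; I expect this to be the crux, requiring the sharp one-dimensional pointwise and derivative bounds on $\theta_{K,0,\mu}$ from \cite{BaiHeLi} to absorb the negative part of the potential into the Dirichlet term $2\mu\fint_\O\theta|\nabla w|^2$, uniformly over $K\in\mathcal K(\O)$. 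Once the leading form is coercive, the smallness of $r=\mathcal L^{-1}\alpha$ absorbs the remainders, giving $\ddot J_\mu<0$ for $h\not\equiv0$, i.e. strict concavity.

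It remains to identify the optimiser when $K\equiv K_0$. The first derivative is $\dot J_\mu=-\fint_\O(p\theta)\,h$, so $-p\theta$ plays the role of the gradient of $J_\mu$. For $\alpha\equiv V_0$ one gets $\theta\equiv K_0-V_0$, hence, solving $\mathcal L p=K-2\theta$ with constant data, $p\equiv(2V_0-K_0)/(K_0-V_0)$ and $-p\theta\equiv K_0-2V_0$ is constant; thus $\alpha\equiv V_0$ satisfies the first-order optimality condition on $\mathcal M_=(\kappa,V_0)$. By strict concavity it is the unique solution of \eqref{Pv:1Eq}, and since $J_\mu$ is increasing for $V_0\ll1$ (Theorem \ref{Th:MonotonicityTrue}) the volume constraint is saturated, so $\alpha\equiv V_0$ also solves \eqref{Pv:1Ineq}.
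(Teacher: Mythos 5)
Your overall architecture coincides with the paper's: compute the second-order Gateaux derivative, show the adjoint $r=\mathcal L^{-1}\alpha$ (the paper's $p_\alpha$) tends to $0$ uniformly as $V_0\to0$, reduce strict concavity to the positive-definiteness of a limiting quadratic form built from the unharvested state $\overline{\theta}$, and then invoke the Bai--He--Li gradient estimate in dimension one and closeness of $K$ to a constant in higher dimension. Your form $Q(w)=2\mu\fint_\O\theta|\nabla w|^2+\fint_\O\theta(3\theta-K+\alpha)w^2$ with $w=\dot\theta/\theta$ is exactly the paper's form $\mu\fint_\O\frac{|\nabla\dot\theta|^2}{\overline{\theta}}-\fint_\O \overline{W}\dot\theta^2$ rewritten after a ground-state substitution, and your computations up to that point are correct.

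The gap is precisely at the step you flag as "the crux": in the one-dimensional case you do not prove positivity of $Q$, you only state the expectation that the Bai--He--Li bounds let you "absorb the negative part of the potential into the Dirichlet term." As stated this cannot work directly: the Dirichlet term $2\mu\fint_\O\overline{\theta}|\nabla w|^2$ vanishes on constants, and the potential $3\overline{\theta}-K$ can be strongly negative on the set where $K$ is large, so no Poincar\'e-type absorption is available without an additional algebraic input. The missing idea is a \emph{second} ground-state comparison: one checks that $\varphi=\overline{\theta}^{3/2}$ is a positive zero mode of $-\mu\nabla\cdot\bigl(\overline{\theta}^{-1}\nabla\bigr)-\overline{Z}$ with $\overline{Z}:=\frac32\frac{K-\overline{\theta}}{\overline{\theta}}+\frac34\mu\frac{|\nabla\overline{\theta}|^2}{\overline{\theta}^3}$, so that by monotonicity of the principal eigenvalue the positivity of the limiting form reduces to the \emph{pointwise} inequality $\overline{Z}>\overline{W}$, i.e.
\[
1-\frac{\mu}{4}\cdot\frac{|\overline{\theta}'|^2}{\overline{\theta}^3}>0,
\]
and it is only to this pointwise inequality that the estimate $\frac{\mu}{2}(\overline{\theta}')^2\leq\frac{\overline{\theta}^3}{3}$ of \cite{BaiHeLi} applies (giving the lower bound $\frac56$). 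Without this device, citing \cite{BaiHeLi} does not close the argument, so point 1 of the theorem is not established by your proposal. The higher-dimensional case and the identification of $\overline{\alpha}\equiv V_0$ for constant $K$ (first-order condition plus strict concavity plus Theorem \ref{Th:MonotonicityTrue}) are fine. A secondary, standard point you gloss over is the uniformity in $\alpha\in\mathcal M_\leq(\kappa,V_0)$ of the coercivity of $\mathcal L$ as $V_0\to0$ (the paper's Lemma \ref{Le:LFA1}), which is needed both for the smallness of $r$ in $\mathscr C^1$ and for the convergence of the quadratic forms.
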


\begin{theorem}\label{Th:Convex}
There exists $\delta_2>0$ such that, for any $0<\delta<\delta_2$,  any solution $m^*$ of \eqref{Pv:1EqLargeFishing} is a bang-bang function: there exists a subset $E^*\subset \O$ such that 
\[ m^*=-\mathds 1_{E^*}.\]

\end{theorem}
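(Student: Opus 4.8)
The plan is to show that the functional $\overline J_{\delta,\mu}$, after subtracting its value at $\delta=0$ and rescaling, becomes a convex function of $m$ to leading order in $\delta$, so that its maximisers over the convex set $\mathcal N_=(\O)$ must be extreme points. First I would expand $\theta_{\alpha_{\delta,m},\mu}$ in powers of $\delta$ around the baseline state. Writing $\alpha_{\delta,m}=K+\delta m$, the state equation at $\delta=0$ reads $-\mu\Delta\theta_0=\theta_0(K-K-\theta_0)=-\theta_0^2$, which together with the Neumann condition and $\theta_0\geq 0$, $\theta_0\neq 0$ forces $\theta_0\equiv 0$. This degeneracy is the crux: the baseline population is extinct, so the first-order term does not vanish trivially and one must track the correct scaling. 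I would therefore set $\theta_{\alpha_{\delta,m},\mu}=\delta\, u_1 + \delta^2 u_2 + o(\delta^2)$ and identify $u_1,u_2$ by matching orders in $\delta$, keeping in mind that the logistic nonlinearity $\theta(-\delta m-\theta)$ contributes at order $\delta^2$ from the $-\theta^2$ term and at order $\delta^2$ from the $-\delta m\theta$ term.

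Next I would substitute this expansion into the objective. Since $\alpha_{\delta,m}\theta=(K+\delta m)(\delta u_1+\delta^2 u_2+\cdots)$, the functional $\overline J_{\delta,\mu}(m)=\fint_\O (K+\delta m)\theta$ expands as $\delta\fint_\O K u_1 + \delta^2\big(\fint_\O K u_2 + \fint_\O m u_1\big)+o(\delta^2)$. The leading coefficient $\fint_\O K u_1$ should be constant in $m$ (or at least affine), so the selection between competing strategies is governed by the order-$\delta^2$ coefficient, and it is this coefficient that I expect to exhibit strict convexity in $m$. Concretely, I would compute the second Gateaux derivative of $\overline J_{\delta,\mu}$ at an admissible $m$ in an admissible direction $h$ (so $\fint_\O h=0$), and show that for $\delta$ small the dominant contribution is a strictly positive quadratic form in $h$. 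This is where the second-order technique of \cite{Mazari2021} enters: one writes the second derivative in terms of the linearised state and adjoint, isolates the leading term in $\delta$, and verifies positivity.

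Once strict convexity of $\overline J_{\delta,\mu}$ on the convex admissible set $\mathcal N_=(\O)$ is established for $0<\delta<\delta_2$, the conclusion follows from a standard argument: a strictly convex functional on a convex set attains its maximum only at extreme points, and the extreme points of $\mathcal N_=(\O)=\{m:\ \Vert m\Vert_{L^\infty}\leq 1,\ \fint_\O m=-m_0\}$ are exactly the functions taking values in $\{-1,+1\}$ almost everywhere. Because the volume constraint fixes $\fint_\O m=-m_0$ with $m_0>0$, I would then argue that the optimiser cannot take the value $+1$ on a set of positive measure — intuitively, by the sign structure of the first-order analysis (fishing beyond the resources at a point is strictly wasteful, consistent with Theorem \ref{Th:MonotonicityFalse}), the value $+1$ is never selected — so the maximiser writes $m^*=-\mathds 1_{E^*}$ for some $E^*\subset\O$, which is the claimed form.

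The main obstacle I anticipate is twofold. First, the degeneracy $\theta_0\equiv 0$ means the linearisation is singular and the expansion must be carried out carefully to the correct order; one cannot directly invoke the $\delta=0$ linearised operator because it is the zero-potential Neumann Laplacian, whose kernel (constants) interferes with the analysis, so uniform-in-$m$ control of the remainder $o(\delta^2)$ requires genuine work. Second, proving that the order-$\delta^2$ quadratic form is \emph{strictly} positive — rather than merely nonnegative — and that this positivity dominates the higher-order remainder uniformly over $\mathcal N_=(\O)$ is the delicate analytic point; this is precisely where the fine coupling of the second-order method with explicit estimates on the linearised state, as flagged in the paper's remark on techniques, becomes indispensable.
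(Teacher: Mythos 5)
Your overall strategy---prove that $\overline J_{\delta,\mu}$ is (to leading order) strictly convex and then invoke ``a strictly convex functional on a convex set is maximised only at extreme points''---contains a genuine gap, and it is not the route the paper takes. The problem is where the convexity is supposed to live. Writing $z_{\delta,m}=\theta_{K+\delta m}/\delta$, one has $z_{\delta,m}=m_0+\frac{\delta}{\mu}w_m+O(\delta^2)$ with $w_m$ depending \emph{linearly} on $m$, so that
\begin{equation*}
\overline J_{\delta,\mu}(m)=\delta K_0m_0-\delta^2m_0^2+\frac{\delta^2}{\mu}\fint_\O K\,w_m+O(\delta^3).
\end{equation*}
The order-$\delta^2$ coefficient is therefore \emph{affine} in $m$ on $\mathcal N_=(\O)$, not strictly convex; the first genuinely quadratic term ($\fint_\O m\,w_m$, the analogue of $J^1$) only appears at order $\delta^3$, i.e.\ at the same order as the remainder you have not controlled. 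So the ``dominant strictly positive quadratic form'' you expect at order $\delta^2$ does not exist, and even if you pushed the expansion one order further you would need a uniform coercivity estimate to prevent the $O(\delta^3)$ remainder from destroying convexity --- a point your sketch does not address. Moreover, global strict convexity of $\overline J_{\delta,\mu}$ is almost certainly false and, crucially, is not what the paper proves.

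What the paper actually does is weaker and sufficient. From Lemma \ref{Le:2GDLargeFishing} and the expansion $q_{\delta,m}=\frac{K_0}{m_0\delta}+o(\delta^{-1})$ of Proposition \ref{Pr:LargeFA2}, the coefficient $q_{\delta,m}-1$ in front of $\fint_\O|\n\dot\theta_{K+\delta m}|^2$ becomes large and positive for small $\delta$, while the zeroth-order potential stays bounded in $L^\infty$ uniformly in $m$. This yields only the one-sided estimate
\begin{equation*}
\frac12\ddot{\overline J}_{\delta,\mu}(m)[h,h]\;\geq\;\gamma\fint_\O\left|\n\dot\theta_{K+\delta m}\right|^2-\beta\fint_\O\dot\theta_{K+\delta m}^2,
\end{equation*}
which is \emph{not} positivity of the quadratic form (slowly varying $h$ can make the right-hand side negative). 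The bang-bang property is then obtained by the oscillation argument of \cite{Mazari2021}: if $m^*$ were not extremal, the intermediate set would have positive measure and one could choose a highly oscillatory admissible perturbation supported there making $\fint_\O|\n\dot\theta|^2>\bigl(\beta/\gamma+1\bigr)\fint_\O\dot\theta^2$, hence $\ddot{\overline J}_{\delta,\mu}(m^*)[h,h]>0$, contradicting maximality. Your proposal is missing this mechanism entirely, and replacing it by a convexity claim that, at the order you invoke it, is false. (Your final heuristic for excluding the value $+1$ is also unsubstantiated as written, though it is a secondary issue next to the convexity gap.)
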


As mentioned before we stated the Theorem, the parameters $\delta_1\,, \delta_2$ are linked to the monotonicity of the functional and it will be shown through the proof that 
\[ \delta_k\leq \e_k\quad (k=1,2).\]

\begin{remark}In Theorems \ref{Th:MonotonicityFalse} and \ref{Th:Convex} we have interpreted "large fishing capacity limit" in an $L^\infty$ sense, by requiring that the $L^\infty$ distance from $K$ to any fishing strategy be small. Another possibility would be to require that the $L^1$ distance of $K$ to the admissible controls is small.  From a technical perspective, this is out of reach for the moment, but we nonetheless provide simulations in section \ref{Se:NumericsSingleCommented} that back up the conjecture that, even in this $L^1$ case, the "bang-bang property" still holds.
\end{remark}

\paragraph{Comment on the proofs}
The proofs of the three theorems above rely on the computation of first and second-orde Gateaux derivatives of the map $J_\mu$. The first order Gateaux-derivative of $J_\mu$ will be denoted by $\dot J_\mu$.
These computations can be used to determine whether or not certain configurations can be optimal, by checking whether or not they satisfy first order optimality conditions.

\paragraph{The large diffusivity limit for single player games: precised change of convexity}All the information above can be made much more precise in certain asymptotic limits. In this section, we analyse in depth the behaviour, as $\mu \to \infty$, of the optimisation problems \eqref{Pv:1Ineq}-\eqref{Pv:1Eq}. This interest of this part is two fold: first, it allows to make the change of regime of the functional $J_\mu$, from concave to convex, much more precise and, second, as the problem is linearised, this allows to gain a full characterisation of certain optimal configurations; this will be used at length in the section devoted to the analysis of Nash equilibria in two player games.

It should be noted that this approach is natural in the context of the spatial ecology: as the intricate nature of the problems at hand makes them hard to solve explicitly, it is hoped that such large diffusivity limits may provide meaningful simplifications of the problem at hand. For instance, we refer to \cite{HN,HENIII,HeNi,Mazari2020}, where such asymptotic regimes are used to tackle both the optimisation of the total population size and the study of stability of certain equilibria in Lotka-Volterra systems.

Recall from \cite{HN,HENIII,HeNi,Mazari2020} that uniformly in $\alpha \in \mathcal M_{\leq}{(\kappa,V_0)}(\O)$ there holds, in the $W^{1,2}(\O)$ sense, 
\[ \tamu =\underbrace{{\left(K_0-\fint_\O \alpha\right)}}_{=:M_\alpha}+\frac{v_{\alpha}}\mu+\underset{\mu \to \infty}O\left(\frac1{\mu^2}\right)\text{ where } \begin{cases}
-\Delta v_\alpha-M_\alpha\left(K-\alpha-M_\alpha\right)=0&\text{ in }\O\,, 
\\ \frac{\partial v_\alpha}{\partial \nu}=0\,, 
\\ \fint_\O v_\alpha=\frac1{M_\alpha^2}\fint_\O |\n v_\alpha|^2.
\end{cases}
\]
Also note that as we wish to investigate the monotonicity of the functional with respect to $\alpha$ in order to analyse whether or not the two formulations \eqref{Pv:1Ineq} and \eqref{Pv:1Eq} are equivalent, we keep $\fint_\O \alpha$ and do not replace it with $V_0$.

In particular we can already see the influence of the total fishing capacity on the first order asymptotic expansion of the functional: as in \cite{Mazari2020} we obtain, uniformly in $\alpha \in \mathcal M_{\leq}{(\kappa,V_0)}(\O)$, the expansion
\[ J_\mu(\alpha)=J^0(\alpha)+\left(\frac1{\mu}\right)\text{ where }J^0:\alpha\mapsto \left(\fint_\O \alpha\right)\left(K_0-\fint_\O \alpha\right),
\]
and it is natural to invest the two asymptotic problems
\begin{equation}\label{Pv:1IneqLarge}\tag{$\bold{P}_{\leq,\mathrm{single},\mu\to\infty,0}$} \fbox{$\displaystyle \sup_{\alpha \in \mathcal M_\leq (\kappa,V_0)} J^0(\alpha)$}\end{equation} and 

\begin{equation}\label{Pv:1EqLarge}\tag{$\bold{P}_{=,\mathrm{single},\mu \to\infty,0}$} \fbox{$\displaystyle \sup_{\alpha \in \mathcal M_= (\kappa,V_0)} J^0(\alpha)$}\end{equation} 
Of course the particularly simple shape of the limit functional $J^0$ makes it amenable to an easy analysis and we have the following Proposition:
\begin{proposition}\label{Pr:Large0}
\begin{enumerate}
\item If $V_0<\frac{K_0}2$ then 
\[ \sup_{\alpha \in \mathcal M_= (\kappa,V_0)} J^0(\alpha)= \sup_{\alpha \in \mathcal M_\leq(\kappa,V_0)} J^0(\alpha).\] In particular the two problems \eqref{Pv:1IneqLarge} and \eqref{Pv:1EqLarge} coincide.
\item If $V_0>\frac{K_0}2$ then 
\[ \sup_{\alpha \in \mathcal M_= (\kappa,V_0)} J^0(\alpha)<\sup_{\alpha \in \mathcal M_\leq(\kappa,V_0)} J^0(\alpha).\]  In particular the two problems \eqref{Pv:1IneqLarge} and \eqref{Pv:1EqLarge} do not coincide.
\end{enumerate}
\end{proposition}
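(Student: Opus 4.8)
The plan is to exploit the fact that the limit functional $J^0$ depends on the control $\alpha$ only through its average $\fint_\O\alpha$. Introducing the scalar variable $t:=\fint_\O\alpha$ and the downward parabola $\p:t\mapsto t(K_0-t)$ already encountered in Remark \ref{Re:KConstant}, one has $J^0(\alpha)=\p(t)$, so both optimisation problems collapse to one-dimensional problems for $\p$ over the respective ranges of admissible averages. Thus the whole argument reduces to (i) identifying these ranges and (ii) comparing the maxima of a single quadratic over them.

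First I would pin down the two ranges. Since $V_0\leq\kappa$ (otherwise $\mathcal M_=(\kappa,V_0)$ is empty, the constant control $\alpha\equiv V_0$ being its natural element), every $\alpha\in\mathcal M_=(\kappa,V_0)$ satisfies $\fint_\O\alpha=V_0$, so $J^0$ is constant on this class and
\[\sup_{\alpha\in\mathcal M_=(\kappa,V_0)}J^0(\alpha)=\p(V_0)=V_0(K_0-V_0).\]
On $\mathcal M_\leq(\kappa,V_0)$ the average $t$ ranges exactly over $[0;V_0]$: every such value is attained by the constant control $\alpha\equiv t$ (admissible since $t\leq V_0\leq\kappa$), and no larger value is allowed by the integral constraint. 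Hence
\[\sup_{\alpha\in\mathcal M_\leq(\kappa,V_0)}J^0(\alpha)=\max_{t\in[0;V_0]}\p(t).\]

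It then remains to compare these two quantities using the elementary properties of $\p$, namely $\p'(t)=K_0-2t$, so that $\p$ is strictly increasing on $[0;K_0/2]$ and attains its unique global maximum at $t=K_0/2$. If $V_0<\frac{K_0}2$, then $\p$ is strictly increasing on all of $[0;V_0]$, so its maximum there is reached at $t=V_0$ and equals $\p(V_0)$, which is precisely the value of the equality problem; this proves item (1). If instead $V_0>\frac{K_0}2$, then $K_0/2$ lies in the interior of $[0;V_0]$, so the inequality problem attains $\p(K_0/2)=K_0^2/4$, whereas the equality problem reaches only $\p(V_0)$; since $\p$ has a strict maximum at $K_0/2$ and $V_0\neq K_0/2$, strict concavity gives $\p(V_0)<\p(K_0/2)$, which is the strict inequality of item (2).

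I do not expect a genuine analytic obstacle here: the entire content is the observation that $J^0$ factors through the average, after which everything follows from the monotonicity and strict concavity of a one-variable quadratic. The only points deserving a line of care are the identification of the achievable range of averages as $[0;V_0]$ (handled by constant controls, admissible exactly because $V_0\leq\kappa$) and the strictness in item (2), which is where the strict concavity of $\p$ is genuinely used rather than mere monotonicity.
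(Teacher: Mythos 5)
Your proof is correct and rests on the same observation as the paper's, namely that $J^0$ factors through the average $\fint_\O\alpha$ and everything reduces to the sign of $K_0-2\fint_\O\alpha$; the paper merely packages this as a Gateaux-derivative/monotonicity argument ($\dot J^0(\alpha)[h]=\bigl(\fint_\O h\bigr)\bigl(K_0-2\fint_\O\alpha\bigr)$, giving monotonicity when $V_0<\tfrac{K_0}2$ and a first-order contradiction when $V_0>\tfrac{K_0}2$), whereas you maximise the scalar parabola $\p(t)=t(K_0-t)$ over the achievable range $[0;V_0]$ explicitly. Your version is, if anything, slightly more complete, since it also records the optimal values $\p(V_0)$ and $K_0^2/4$ and makes explicit the (standing) nonemptiness condition $V_0\leq\kappa$ guaranteeing that the constant control $K_0/2$ is admissible in case (2).
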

The content of this proposition is  that at the first order the asymptotic expansion of the functional selects an optimal fishing ability. However, it characterises neither its pointwise nor its geometric properties. This information is carried by the next order of this asymptotic expansion, and we will only work with equality constraints. To make this more precise we define the functional 
\[J^1:\alpha \mapsto \fint_\O \alpha v_\alpha\text{ where }\begin{cases}
-\Delta v_\alpha-M_\alpha\left(K-\alpha-M_\alpha\right)=0&\text{ in }\O\,, 
\\ \frac{\partial v_\alpha}{\partial \nu}=0\,, 
\\ \fint_\O v_\alpha=\frac1{M_\alpha^2}\fint_\O |\n v_\alpha|^2.
\end{cases}\] and similarly to \cite{Mazari2020} we obtain, uniformly in $\alpha \in \mathcal M_\leq(\kappa,V_0)$, 
\[ J_\mu(\alpha)=J^0(\alpha)+\frac{J^1(\alpha)}{\mu}+\underset{\mu \to \infty}O \left(\frac1{\mu^2}\right)\] so that the next order optimisation problem is 

\begin{equation}\label{Pv:1EqLarge2}\tag{$\bold{P}_{=,\mathrm{single},\mu \to\infty,1}$} \fbox{$\displaystyle \sup_{\alpha \in \mathcal M_= (\kappa,V_0)} J^1(\alpha)$.}\end{equation} 
We have a fairly good understanding of this optimisation problem, as showcased by the following theorem:

\begin{theorem}\label{Th:AsymptoticSingle}
We have the following  results:
\begin{enumerate}
\item\underline{Concavity for low fishing abilities:} if $V_0<\frac{K_0}2$, the functional $J^1$ is strictly concave on $\mathcal M_= (\kappa,V_0)$.
\item \underline{Convexity for large fishing abilities:} if $V_0>\frac{K_0}2$, the functional $J^1$ is strictly convex on $\mathcal M_= (\kappa,V_0)$. Consequently the solutions of \eqref{Pv:1EqLarge2} are bang-bang functions.
\item \underline{Characterisation in one dimension:} if $\O=(0;1)$, if $V_0>\frac{K_0}2$, if $K$ is non-increasing and non constant, the optimal fishing strategy $\alpha^*$ is equal to
\[ \alpha^*=\kappa \mathds 1_{[1-\ell;1]}\text{ with }\ell\kappa=V_0.\]
\end{enumerate}
\end{theorem}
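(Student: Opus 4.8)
The plan is to exploit the fact that on the equality-constrained class $\mathcal M_=(\kappa,V_0)$ the mean $\fint_\O\alpha\equiv V_0$ is frozen, so that $M_\alpha\equiv M:=K_0-V_0$ is a \emph{constant}. Consequently the corrector equation in \eqref{DefMA} becomes $-\Delta v_\alpha=M(K-\alpha-M)$ with a right-hand side that is \emph{affine} in $\alpha$; writing $v_\alpha=\tilde v_\alpha+c_\alpha$ with $\fint_\O\tilde v_\alpha=0$ and $c_\alpha=\frac1{M^2}\fint_\O|\n\tilde v_\alpha|^2$, the map $\alpha\mapsto\tilde v_\alpha$ is affine and $J^1$ is therefore a genuine quadratic functional on the affine slice $\mathcal M_=(\kappa,V_0)$. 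A one-line integration by parts (using $-\Delta\tilde v_\alpha=M(K-\alpha-M)$ and $\fint_\O\tilde v_\alpha=0$) rewrites it as
\[ J^1(\alpha)=\fint_\O K\tilde v_\alpha+\frac{2V_0-K_0}{M^2}\fint_\O|\n\tilde v_\alpha|^2,\]
which already isolates the sign-changing parameter $s:=2V_0-K_0$.

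For items (1) and (2) I would compute the Hessian directly. Perturbing $\alpha$ by an admissible direction $h$ (i.e. $\fint_\O h=0$) gives $\dot{\tilde v}=-Mq_h$, where $-\Delta q_h=h$, $\partial_\nu q_h=0$, $\fint_\O q_h=0$; since the first term above is affine it drops out of the second derivative, and
\[ D^2J^1(\alpha)[h,h]=\frac{2s}{M^2}\fint_\O|\n\dot{\tilde v}|^2=2(2V_0-K_0)\fint_\O|\n q_h|^2.\]
As $\fint_\O|\n q_h|^2>0$ whenever $h\neq0$, this quadratic form is negative definite when $V_0<\frac{K_0}2$ and positive definite when $V_0>\frac{K_0}2$, giving strict concavity and strict convexity respectively. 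For (2), a strictly convex functional on the convex, weak-$\star$ compact set $\mathcal M_=(\kappa,V_0)$ attains its maximum only at extreme points, and the extreme points of $\mathcal M_=(\kappa,V_0)$ are exactly the bang-bang controls $\kappa\mathds 1_E$; hence every solution of \eqref{Pv:1EqLarge2} is bang-bang.

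For item (3) I specialise to $\O=(0;1)$ and combine the bang-bang conclusion with a one-dimensional rearrangement argument. Writing $Q_\alpha(x):=\int_0^x(V_0-\alpha)$ (so $Q_\alpha(0)=Q_\alpha(1)=0$) and $p_1:=P'$ with $-P''=K-K_0$, $\fint_\O P=0$, a further integration by parts turns the formula above into
\[ J^1(\alpha)=\mathrm{const}-(3V_0-K_0)\int_0^1 p_1Q_\alpha+(2V_0-K_0)\int_0^1 Q_\alpha^2.\]
Because $K$ is non-increasing one checks $p_1\le0$ on $(0;1)$, and because $V_0>\frac{K_0}2$ both prefactors $3V_0-K_0$ and $2V_0-K_0$ are positive. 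Let $\alpha^\star=\kappa\mathds 1_{[1-\ell;1]}$, $\ell\kappa=V_0$, be the non-decreasing rearrangement of a bang-bang competitor $\alpha=\kappa\mathds 1_E$. The key monotonicity is the elementary majorisation $\int_0^x\alpha^\star\le\int_0^x\alpha$ for all $x$, i.e. $Q_{\alpha^\star}\ge Q_\alpha$ pointwise, with $Q_{\alpha^\star}\ge0$. Since $-p_1\ge0$ and $3V_0-K_0>0$, the $p_1$-term does not decrease under this rearrangement, strictly so as soon as $K$ is non-constant (so that $p_1\not\equiv0$) and $\alpha\neq\alpha^\star$. The purely quadratic energy term $\int_0^1Q_\alpha^2=\frac1{M^2}\fint_\O|\n\tilde v_\alpha|^2$ I would treat by the rearrangement and Talenti machinery of \cite{Langford,Mazari2020}: among all bang-bang controls of fixed measure $\ell$, the monotone rearrangement maximises this Dirichlet-type energy, so this term too is maximal at $\alpha^\star$. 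Combining the two estimates yields $J^1(\alpha^\star)\ge J^1(\alpha)$, with strict inequality unless $\alpha=\alpha^\star$, identifying $\alpha^\star=\kappa\mathds 1_{[1-\ell;1]}$ as the unique maximiser.

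I expect the main obstacle to be this last point. Unlike the $p_1$-term, the energy $\int_0^1Q_\alpha^2$ is genuinely symmetric under $x\mapsto 1-x$ (the increasing and decreasing rearrangements give the \emph{same} value), so it cannot by itself select the right end $[1-\ell;1]$ over the left end; the selection must come entirely from the sign of $p_1$, that is, from the monotonicity of $K$. The delicate step is therefore to run the energy comparison and the $p_1$-comparison \emph{simultaneously} under a single rearrangement while keeping track of the equality cases, which is exactly where the Talenti-type inequalities and the strict non-constancy of $K$ must be used in tandem to upgrade ``$\ge$'' to a strict inequality and secure uniqueness.
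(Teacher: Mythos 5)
Your treatment of items (1) and (2) is essentially the paper's own proof: the same zero-mean decomposition of the corrector, the same identity $J^1(\alpha)=\fint_\O K\hat v_\alpha+\frac{2V_0-K_0}{M_0^2}\fint_\O|\n\hat v_\alpha|^2$, the observation that the first term is affine in $\alpha$, the same sign analysis of the quadratic part, and the same extreme-point argument for the bang-bang conclusion.

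For item (3) you take a genuinely different, and in some respects cleaner, route. The paper keeps the split $J^1=-2C_0\mathcal E_\alpha(\hat v_\alpha)+\fint_0^1 K\hat v_\alpha$ and shows that the monotone rearrangement $\alpha\mapsto\alpha_\#$ increases \emph{both} terms: the energy term via P\'olya--Szeg\"o and Hardy--Littlewood, and the term $\fint_0^1 K\hat v_\alpha$ via a Talenti-type comparison $\hat v_\alpha\prec\hat v_{\alpha_\#}$ proved through the Riesz convolution inequality after even reflection; uniqueness is then extracted from the equality cases of P\'olya--Szeg\"o and Hardy--Littlewood. Your split $J^1=\mathrm{const}-(3V_0-K_0)\int_0^1 p_1Q_\alpha+(2V_0-K_0)\int_0^1 Q_\alpha^2$ (which I checked is algebraically consistent with the paper's formula, the coefficient $3V_0-K_0$ also matching Proposition \ref{Pr:K0/3}) pushes all the $K$-dependence into the linear term, where it is handled by the elementary facts that $p_1=-\int_0^x(K-K_0)\le 0$ (indeed $<0$ on $(0;1)$ when $K$ is non-increasing and non-constant, by concavity of $\int_0^x(K-K_0)$) and $Q_{\alpha^\star}\ge Q_\alpha$ pointwise. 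This buys you two things: the heavy rearrangement machinery is only needed for the $K$-independent term $\int_0^1Q_\alpha^2$, and strictness/uniqueness comes entirely from the linear term, so you never need to track equality cases in P\'olya--Szeg\"o. The step you flag as the main obstacle is in fact not one: the inequality $\int_0^1Q_{\alpha_\#}^2\ge\int_0^1Q_\alpha^2$ is exactly the paper's energy-comparison step specialised to constant $K$ (write $\int_0^1Q_\alpha^2=-2\min_{u}\bigl(\frac12\int_0^1|u'|^2-\int_0^1(V_0-\alpha)u\bigr)$ over zero-mean $u$ and apply P\'olya--Szeg\"o plus the lower Hardy--Littlewood bound to the test function $R_\alpha^\#$), and no Talenti/Green-function argument is required for it. Nor do the two comparisons need to be run ``simultaneously'': both are taken with respect to the single map $\alpha\mapsto\alpha^\star$, the $p_1$-term is already \emph{strictly} increased whenever $\alpha\neq\alpha^\star$ and $K$ is non-constant, and a non-strict ``$\ge$'' for the quadratic term then suffices. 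Your proof therefore closes, and arguably with less machinery than the paper's.
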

The proof of this theorem relies on a rewriting of the functional $J^1$ and, for the characterisation of optimisers in the one-dimensional case, we use Talenti inequalities \cite{Langford}.

With the elements that will be used in the proof of Theorem \ref{Th:AsymptoticSingle} we also derive the following result that shows the particular role of the volume constraint $V_0=\frac{K_0}3$. Before we state it, let us simply recall that a critical point of $J^1$ is simply a fishing strategy $\alpha \in \mathcal M_=(\kappa,V_0)$ such that the Gateaux-derivative of $J^1$ at $\alpha$ in any admissible direction is zero. 
\begin{proposition}\label{Pr:K0/3}
Consider the constant fishing strategy $\overline\alpha\equiv V_0$. Then $\overline \alpha$ is a critical point of $J^1$ on $\mathcal M_=(\kappa,V_0)$  if, and only if, one of the following is satisfied:
\begin{center} Either $K$ is constant or $V_0=\frac{ K_0}3$.\end{center} In particular, for any $K$, if $V_0=\frac{K_0}3$, the only solution of \eqref{Pv:1EqLarge2} is $\overline \alpha$.
\end{proposition}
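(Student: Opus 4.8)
The plan is to compute the first-order Gateaux derivative of $J^1$ at the constant strategy $\overline\alpha\equiv V_0$ in admissible directions and to reduce the criticality condition to a single scalar identity. Since $V_0\in(0;\kappa)$, the constant $\overline\alpha$ lies in the interior of the pointwise constraints, so the admissible directions are exactly the functions $h\in L^\infty(\O)$ with $\fint_\O h=0$ (this is what keeps $\fint_\O(\overline\alpha+th)=V_0$). The crucial preliminary observation is that along such a direction the mean $M_\alpha=K_0-\fint_\O\alpha$ is \emph{unchanged} to first order, i.e. $\dot M_\alpha=-\fint_\O h=0$; this is what makes the whole computation tractable.

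First I would differentiate the defining system for $v_\alpha$. Writing $\dot v_\alpha$ for the derivative of $v_{\alpha+th}$ at $t=0$ and using $\dot M_\alpha=0$, the state equation gives $-\Delta\dot v_\alpha=-M_\alpha h$ with homogeneous Neumann conditions, while differentiating the normalisation $\fint_\O v_\alpha=\frac1{M_\alpha^2}\fint_\O|\nabla v_\alpha|^2$ and integrating by parts yields the clean identity
\[\fint_\O\dot v_\alpha=\frac{2}{M_\alpha^2}\fint_\O\nabla v_\alpha\cdot\nabla\dot v_\alpha=-\frac{2}{M_\alpha}\fint_\O v_\alpha h.\]
This step is where the additive constant in $v_\alpha$ must be tracked carefully: because $J^1$ integrates $\alpha$ against $v_\alpha$, the normalisation genuinely contributes, and getting it wrong would corrupt the coefficient appearing below. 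I expect this bookkeeping to be the only delicate point. To handle the term $\fint_\O\alpha\dot v_\alpha$ I would introduce the adjoint $p_\alpha$ solving $-\Delta p_\alpha=\alpha-\fint_\O\alpha$ with Neumann conditions and $\fint_\O p_\alpha=0$; integrating by parts against $\dot v_\alpha$ gives $\fint_\O(\alpha-\fint_\O\alpha)\dot v_\alpha=-M_\alpha\fint_\O p_\alpha h$. Collecting the three contributions,
\[\dot J^1(\alpha)[h]=\fint_\O h\,v_\alpha-M_\alpha\fint_\O p_\alpha h-\frac{2\fint_\O\alpha}{M_\alpha}\fint_\O v_\alpha h.\]

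Next I would specialise at $\overline\alpha\equiv V_0$. Here $p_{\overline\alpha}\equiv0$ (its source vanishes), $M:=M_{\overline\alpha}=K_0-V_0$, and $v_{\overline\alpha}=Mw+c$ where $w$ is the zero-average solution of $-\Delta w=K-K_0$ and $c$ is the normalisation constant. Since $\fint_\O h=0$ the constant $c$ drops out, and the formula collapses to
\[\dot J^1(\overline\alpha)[h]=\Bigl(1-\tfrac{2V_0}{M}\Bigr)\fint_\O h\,v_{\overline\alpha}=(M-2V_0)\fint_\O h\,w=(K_0-3V_0)\fint_\O h\,w.\]
Thus $\overline\alpha$ is a critical point of $J^1$ iff $(K_0-3V_0)\fint_\O h\,w=0$ for every mean-zero $h$. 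Taking $h=w$ shows this forces either $K_0-3V_0=0$, i.e. $V_0=\frac{K_0}3$, or $w\equiv0$; and $w\equiv0$ is equivalent to $K\equiv K_0$ being constant (from $-\Delta w=K-K_0$ with $\fint_\O w=0$). This is exactly the stated dichotomy.

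Finally, for the last assertion, note that $V_0=\frac{K_0}3<\frac{K_0}2$, so Theorem \ref{Th:AsymptoticSingle}(1) guarantees that $J^1$ is strictly concave on the convex set $\mathcal M_=(\kappa,V_0)$. A strictly concave functional on a convex set admits at most one critical point, which is then its unique maximiser; since we have just shown $\overline\alpha$ to be critical when $V_0=\frac{K_0}3$, it is the unique solution of \eqref{Pv:1EqLarge2}, for every admissible $K$.
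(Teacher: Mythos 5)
Your proof is correct and follows essentially the same route as the paper: a first-order criticality computation with an adjoint state, reducing the condition to the factor $K_0-3V_0$ multiplying a quantity that vanishes if and only if $K$ is constant. The only (minor) differences are that you differentiate the normalisation of $v_\alpha$ directly instead of passing through the paper's zero-mean reformulation $\hat v_\alpha$, and that you explicitly justify the final uniqueness assertion via the strict concavity from Theorem \ref{Th:AsymptoticSingle}, a step the paper's proof leaves implicit.
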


\subsection{Qualitative analysis of Nash equilibria for two-player games: general diffusivities}
In this section, we present the second facet of the fishing problems we laid out in the introduction, namely, the problem of existence and equilibria of Nash equilibria for multiple player games. For the sake of simplicity, we will only work on two-players games.
\paragraph{Set-up and definitions}
We consider two players; the first player plays a fishing strategy $\alpha_1$ and the second player uses a fishing strategy $\alpha_2$. We assume that the fish population still accesses resources modelled by the function $K:\O\to \R$, with $K_0=\fint_\O K$, and that there exists constant $\kappa_i\,, V_i$ ($i=1,2$) such that 
\[ \text{ for }i=1,2\,, \alpha_i\in \mathcal M_= (\kappa_i,V_i).\] Let us note that here we work with equality constraints. We refer to Remark \ref{Re:NashMonot} for additional comments about the constraints but simply note here that this simplifies our presentation.
If we assume that 
\[ V_1+V_2< K_0\] then we can define $\taa$ as the unique solution of 
\begin{equation}\label{Eq:ELDNash}\begin{cases}
-\mu \Delta \taa-\taa(K-\alpha_1-\alpha_2-\taa)=0&\text{ in }\O\,, 
\\ \frac{\partial \taa}{\partial \nu}=0&\text{ on }\partial \O\,, 
\\ \taa\geq 0\,, \taa\neq 0.\end{cases}\end{equation}
It should be noted that throughout this section we once again changed the subscript defining the solution $\taa$ in order to emphasise that our optimisation variables are $\alpha_1\,, \alpha_2$. 

For the $i$-th player ($i=1,2$) the fishing output is given by the functional 
\[I_{i,\mu}:(\alpha_1,\alpha_2)\mapsto \fint_\O \alpha_i\taa.\]
Each player wants to maximise its fishing outcome, so that we are typically in a situation where we want to investigate the existence of \emph{Nash equilibria}, defined as follows:

\begin{definition}\label{De:Nash}A Nash equilibrium for our two-players game is a couple of fishing strategies $(\alpha_1^*,\alpha_2^*)$ such that 
\[\begin{cases} I_{1,\mu}(\alpha_1^*,\alpha_2^*)=\displaystyle\max_{\alpha_1\in \mathcal M_{=}(\kappa_1,V_1)}I_{1,\mu}(\alpha_1,\alpha_2^*)\,,\\
I_{2,\mu}(\alpha_1^*,\alpha_2^*)=\displaystyle\max_{\alpha_2\in \mathcal M_{=}(\kappa_2,V_2)}I_{2,\mu}(\alpha_1^*,\alpha_2).\end{cases}
\]
\end{definition}

\begin{remark}\label{Re:NashMonot}[Equality vs. Inequality constraints]
Of course, the same type of results that we obtained in the single player case  (Theorem \ref{Th:MonotonicityTrue}) could be derived in the case of multiple players games when considering the influence of an (in)equality constraint in the set of admissible fishing strategies; as the results would be extremely similar, we do not detail the influence of an inequality constraint and this is why we work with an equality constraint.
\end{remark}
Our main research question here is:
\begin{center} Does there exist a Nash equilibrium for the two-players game described above?\end{center}Let us note here that, in general, establishing the existence of Nash equilibria is a delicate matter, that can usually be achieved using concavity or convexity properties of the functionals at hand \cite{Gonz_lez_D_az_2010}.      

Our first theorem shows that whenever the fishing abilities of both players are small enough, a Nash equilibrium exists.
\begin{theorem}\label{Th:NashExists}
In the one-dimensional case $\O=(0;1)$, the constants $\kappa_1\,, \kappa_2\,, \mu$ being fixed, there exists $\delta=\delta(\kappa_1,\kappa_2,\mu)>0$ such that, if 
\[ V_1+V_2\leq \delta\] there exists a Nash equilibrium.

In any dimension $d$, the constants $\kappa_1\,, \kappa_2\,, \mu$ being fixed, there exists $\delta_1>0\,, \delta_2>0$ such that, if 
\[ V_1+V_2\leq \delta_1\,, \Vert K-\overline K\Vert_{L^1(\O)}\leq \delta_2\] there exists a Nash equilibrium.
\end{theorem}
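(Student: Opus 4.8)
The plan is to realise a Nash equilibrium as a fixed point of the joint best-response map and to produce that fixed point via the Schauder--Tychonoff theorem. The crucial observation is that, for a \emph{fixed} opponent strategy, each player's problem is exactly a single-player fishing problem for a perturbed resource distribution. Indeed, freezing $\alpha_2\in\mathcal M_=(\kappa_2,V_2)$, the state $\taa$ solves
\[ -\mu\Delta\taa-\taa\bigl((K-\alpha_2)-\alpha_1-\taa\bigr)=0,\]
so that $I_{1,\mu}(\cdot,\alpha_2)$ coincides with the single-player functional $J_\mu$ computed with the effective resource $\widehat K:=K-\alpha_2$ and control $\alpha_1$; symmetrically for player $2$ with $\widehat K:=K-\alpha_1$. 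A Nash equilibrium is then nothing but a fixed point of $\Phi(\alpha_1,\alpha_2):=\bigl(R_1(\alpha_2),R_2(\alpha_1)\bigr)$, where $R_i$ denotes the best-response map of player $i$.

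The first step is to establish strict concavity of each best-response problem by applying Theorem \ref{Th:Concave} to the effective resource $\widehat K$. In dimension one this yields, for $V_1<\e_2$, strict concavity of $\alpha_1\mapsto I_{1,\mu}(\alpha_1,\alpha_2)$ for \emph{every} admissible $\alpha_2$. In dimension $d\ge 2$ I must check that $\widehat K$ stays $L^1$-close to a constant: since $\alpha_2\in[0,\kappa_2]$ with $\fint_\O\alpha_2=V_2$, one has $\|\alpha_2-V_2\|_{L^1(\O)}\le 2V_2\,\mathrm{Vol}(\O)$, whence
\[ \bigl\|\widehat K-\overline{\widehat K}\bigr\|_{L^1(\O)}\le \|K-\overline K\|_{L^1(\O)}+2V_2\,\mathrm{Vol}(\O),\]
so choosing $\delta_1,\delta_2$ small keeps this below the threshold $\e_3$ of Theorem \ref{Th:Concave}. (The positivity of $\widehat K$ is not essential: the second-order Gateaux argument underlying Theorem \ref{Th:Concave} uses only the $L^\infty$-smallness of the control and the closeness of the resource to a constant, both of which survive the perturbation.) Strict concavity forces uniqueness of the maximiser, so $R_1(\alpha_2)=\argmax_{\alpha_1\in\mathcal M_=(\kappa_1,V_1)}I_{1,\mu}(\alpha_1,\alpha_2)$ and its counterpart $R_2$ are single-valued.

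Next I set up the topology and prove a fixed point exists. Each $\mathcal M_=(\kappa_i,V_i)$ is convex, bounded, and weak-$*$ closed in $L^\infty(\O)$, hence weak-$*$ compact by Banach--Alaoglu, and this topology is metrisable on it since $L^1(\O)$ is separable. Standard elliptic estimates show that $\alpha\mapsto\theta$ is continuous from the weak-$*$ topology into the strong $W^{1,2}(\O)$ topology, so each $I_{i,\mu}$ is weak-$*$ continuous, being a product of a weak-$*$ convergent control against a strongly convergent state. Combining this with the uniqueness from the previous step, a routine argument (extract a convergent subsequence of $R_1(\alpha_2^n)$, pass to the limit in the optimality inequality, identify the limit by uniqueness) gives continuity of $R_1,R_2$, hence of $\Phi$, on the convex compact product set. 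The Schauder--Tychonoff fixed-point theorem then yields $(\alpha_1^*,\alpha_2^*)$ with $\Phi(\alpha_1^*,\alpha_2^*)=(\alpha_1^*,\alpha_2^*)$, i.e.\ a Nash equilibrium in the sense of Definition \ref{De:Nash}; the hypothesis $V_1+V_2\le\delta<K_0$ ensures \eqref{Eq:ELDNash} is well-posed throughout.

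I expect the main obstacle to be the concavity transfer of the second step: one must guarantee that a \emph{single} smallness threshold on $V_1,V_2$ (and on $\|K-\overline K\|_{L^1}$ in higher dimensions) works \emph{uniformly} over all admissible opponent strategies $\alpha_j$, so that the concavity estimate from Theorem \ref{Th:Concave} is genuinely uniform rather than merely pointwise in the frozen parameter. The weak-$*$ continuity of the best-response maps, though standard in spirit, also relies on this uniformity, and care is needed to ensure the perturbed resource never escapes the regime in which Theorem \ref{Th:Concave} applies.
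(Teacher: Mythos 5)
Your proposal is correct, and its first (and mathematically substantive) step is identical to the paper's: freeze the opponent's strategy, observe that the resulting problem is a single-player problem with effective resource $K-\alpha_j$, and invoke Theorem \ref{Th:Concave} to obtain concavity of $I_{1,\mu}(\cdot,\alpha_2)$ and $I_{2,\mu}(\alpha_1,\cdot)$, with exactly your bookkeeping in dimension $d\geq 2$ ($\|K-\alpha_2-\overline{(K-\alpha_2)}\|_{L^1}$ controlled by $\|K-\overline K\|_{L^1}+2V_2\mathrm{Vol}(\O)$). Where you diverge is the existence mechanism. The paper does not work with best-response maps at all: it discretises the admissible sets over a measurable partition of $\O$ into cells of volume at most $2^{-N}$, applies the \emph{finite-dimensional} Nash--Glicksberg theorem for concave continuous payoffs on convex compact sets to get an equilibrium $(\alpha_{1,N}^*,\alpha_{2,N}^*)$ for each $N$, and then passes to the weak-$*$ limit $N\to\infty$, using the strong $L^2$ convergence of the states and the $L^1$-density of piecewise-constant controls to transfer the optimality inequalities. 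You instead exploit the \emph{strictness} of the concavity to make the best-response maps $R_1,R_2$ single-valued, prove their weak-$*$ continuity by the usual subsequence-plus-uniqueness argument, and apply Schauder--Tychonoff on the weak-$*$ compact convex product set. Your route is shorter and avoids the approximation bookkeeping; the paper's route needs only concavity (not strictness, hence not uniqueness of best responses) and is in effect a hand-made proof of the infinite-dimensional Glicksberg theorem, so it would survive if uniqueness failed. Both arguments share the same two delicate points, which you are right to flag: the threshold in Theorem \ref{Th:Concave} must be uniform over the frozen opponent strategy $\alpha_j$ (the paper asserts this without further comment), and $K-\alpha_j$ need not belong to $\mathcal K(\O)$ as literally required by Theorem \ref{Th:Concave} (it can be negative and has mean $K_0-V_j$), an abuse the paper commits silently and you at least acknowledge.
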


Of course this result is linked to Theorem \ref{Th:Concave} above, as, since the seminal paper \cite{Nash1951}, the concavity of the cost functionals is known to be of paramount importance to obtain the existence of equilibria. Nonetheless, the proof is not immediate.
%

The standard results do not enable us to obtain the existence of a Nash equilibrium when, on the other hand, $V_1+V_2$ is close to $K_0$, and we can not conclude in the general case. We can, however, pursuing our investigation of asymptotic regimes, show that, even in this case, in which the cost functionals can behave, from the point of view of optimal control, as convex functions, (see Theorem \ref{Th:Convex} above), 
there exists a Nash equilibrium when the diffusivity $\mu$ is large enough.
\subsection{Existence of Nash equilibria when the cost functionals are convex: asymptotic analysis}
Our final result deals with a slightly more complicated case, that of convex functionals. Here, we provide a result for the asymptotic expansion of the fishing functionals, and in the case where $K\equiv \overline K$ is constant. This problem corresponds to taking the limit $\mu\to \infty$. Following the analysis that was succinctly presented when introducing the problem \eqref{Pv:1EqLarge2} we define the two limiting functionals (in what follows, $M=K_0-V_1-V_2$)
\[ I_i^1:\mathcal M_=(\kappa_i,V_i)\ni \alpha_i\mapsto\fint_\O \alpha_i v_{\alpha_1,\alpha_2}\text{ where }v_{\alpha_1,\alpha_2} solves \begin{cases}-\Delta v_{\alpha_1,\alpha_2}-M(K_0-\alpha_1-\alpha_2-M)=0&\text{ in }\O\,, 
\\ \frac{\partial v_{\alpha_1,\alpha_2}}{\partial \nu}=0&\text{ on }\partial \O\,, 
\\ \fint_\O v_{\alpha_1,\alpha_2}=\frac1{M^2}\fint_\O |\n v_{\alpha_1,\alpha_2}|^2.
\end{cases}\] 
An asymptotic Nash equilibrium is then defined as follows:
\begin{definition}\label{De:NashAsymptotic}An asymptotic Nash equilibrium for our two-players game is a couple of fishing strategies $(\alpha_1^*,\alpha_2^*)$ such that 
\[\begin{cases} I_{1}^1(\alpha_1^*,\alpha_2^*)=\max_{\alpha_1\in \mathcal M_{=}(\kappa_1,V_1)}I_{1}^1(\alpha_1,\alpha_2^*)\,,\\
I_{2}^1(\alpha_1^*,\alpha_2^*)=\max_{\alpha_2\in \mathcal M_{=}(\kappa_2,V_2)}I_{2}^1(\alpha_1^*,\alpha_2).\end{cases}
\]
\end{definition}

\begin{theorem}\label{Th:NashLarge}
Assume $V_1\,, V_2>\frac{K_0}4$, assume $K$ is constant and let 
\[ \alpha_i^*=\kappa_i\mathds 1_{[0;\ell_i]} \text{ with }\kappa_i\ell_i=V_i \text{ ($i=1,2$)}.\] 
$(\alpha_1^*\,, \alpha_2^*)$ is a Nash equilibrium in the sense of Definition \ref{De:NashAsymptotic}.
\end{theorem}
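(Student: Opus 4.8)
The plan is to verify the two best-response conditions of Definition \ref{De:NashAsymptotic} directly. Since the roles of the players are symmetric, it suffices to show that $\alpha_1^*$ globally maximises $\alpha_1\mapsto I_1^1(\alpha_1,\alpha_2^*)$ on $\mathcal M_=(\kappa_1,V_1)$, the argument for player $2$ being identical after exchanging indices. The decisive observation is that, because $K\equiv K_0$ is constant, freezing $\alpha_2=\alpha_2^*$ recasts player $1$'s problem as a \emph{single-player} problem with an inhomogeneous effective capacity. Indeed the profile $v_{\alpha_1,\alpha_2^*}$ entering $I_1^1$ solves $-\Delta v_{\alpha_1,\alpha_2^*}=M\big((K_0-\alpha_2^*)-\alpha_1-M\big)$, which is exactly the first-order profile equation of \eqref{DefMA} for the capacity $\widetilde K_1:=K_0-\alpha_2^*$ and control $\alpha_1$, with the \emph{same} constant $M$ — note that $\fint_\O\widetilde K_1-V_1=(K_0-V_2)-V_1=M$. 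Hence $I_1^1(\cdot,\alpha_2^*)$ coincides with the single-player functional $J^1$ of Theorem \ref{Th:AsymptoticSingle} built on $\widetilde K_1$, and player $1$'s best-response problem is precisely \eqref{Pv:1EqLarge2} with $K$ replaced by $\widetilde K_1$.

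First I would record the shape of the effective capacity: $\widetilde K_1=K_0-\kappa_2\mathds 1_{[0;\ell_2]}$ is a non-constant step function, equal to $K_0-\kappa_2$ on $[0;\ell_2]$ and to $K_0$ on $[\ell_2;1]$, hence non-decreasing on $(0;1)$. The reflection $x\mapsto 1-x$ leaves $\Delta$, the Neumann conditions and the functional invariant and turns $\widetilde K_1$ into a non-increasing, non-constant capacity; I can therefore invoke the one-dimensional characterisation of Theorem \ref{Th:AsymptoticSingle}(3). It produces a \emph{unique} maximiser, concentrated on the sub-interval where $\widetilde K_1$ is smallest; undoing the reflection, this maximiser is $\kappa_1\mathds 1_{[0;\ell_1]}$ with $\kappa_1\ell_1=V_1$, i.e. exactly $\alpha_1^*$. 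Running the same reduction for player $2$ with the effective capacity $\widetilde K_2:=K_0-\alpha_1^*=K_0-\kappa_1\mathds 1_{[0;\ell_1]}$ identifies $\alpha_2^*$ as player $2$'s unique best response, and the two statements together give that $(\alpha_1^*,\alpha_2^*)$ is a Nash equilibrium.

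The step I expect to be the main obstacle is justifying the use of Theorem \ref{Th:AsymptoticSingle}(3), which delivers a bang-bang maximiser only in the \emph{convex} regime of the reduced functional. Writing the reduced functional (up to an additive constant and a positive factor) in the form $a\,\fint_\O|\nabla v|^2+b\,(\text{linear in }\alpha_1)$, where $v$ is the zero-mean first-order profile, one finds that convexity is governed by the sign of the coefficient $a\propto V_1-M$: player $1$'s reduced problem is strictly convex exactly when $V_1>M$, equivalently $2V_1+V_2>K_0$, and symmetrically player $2$'s reduced problem is convex when $V_1+2V_2>K_0$. This is precisely where the lower bounds on $V_1,V_2$ — together with the standing constraint $V_1+V_2<K_0$, i.e. $M>0$ — must be spent, and pinning down the sharp threshold is delicate: if a reduced functional is instead concave, a bang-bang profile can be optimal only when the associated switching potential $\psi$, a fixed linear combination of the two first-order profiles attached to $\alpha_1^*$ and $\alpha_2^*$, is monotone, so that the first-order optimality inequality $\fint_\O(\alpha_1-\alpha_1^*)\psi\ge0$ holds by a bathtub argument. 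I would therefore split into the convex case (handled by the Talenti-type rearrangement underlying Theorem \ref{Th:AsymptoticSingle}(3)) and, should the volume bounds not already force $V_i>M$, the concave case (handled by checking monotonicity of $\psi$ and the ensuing first-order condition, which in the concave regime is sufficient for a global maximum). Verifying that the prescribed lower bounds on $V_1,V_2$ indeed place the pair in one of these two favourable regimes — and confirming the orientation of the rearrangement so that both players concentrate on the \emph{left} endpoint — is the technical heart of the proof; the reduction identity $I_1^1(\cdot,\alpha_2^*)=J^1_{\widetilde K_1}$ and the reflection symmetry are routine.
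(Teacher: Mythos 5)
Your reduction is exactly the one the paper uses: freeze $\alpha_2^*$, observe that $I_1^1(\cdot,\alpha_2^*)$ is the single-player asymptotic functional $J^1$ built on the effective capacity $\widetilde K_1=K_0-\alpha_2^*$, and invoke Theorem \ref{Th:AsymptoticSingle}(3) (after the reflection $x\mapsto 1-x$, which you make explicit and the paper leaves implicit). The real value of your write-up is the point you flag as the obstacle, and you have diagnosed it correctly: convexity of the reduced functional requires $V_1>\widetilde M$ with $\widetilde M=(K_0-V_2)-V_1$, i.e. $2V_1+V_2>K_0$. The paper's proof disposes of this in one line by asserting that $V_1,V_2\geq \frac{K_0}{4}$ implies $V_1\geq \frac{K_0-V_2}{2}$; that implication is false in general (take $V_1=V_2=\frac{3K_0}{10}$, which satisfies $V_i>\frac{K_0}{4}$ and $V_1+V_2<K_0$, yet $\frac{K_0-V_2}{2}=\frac{7K_0}{20}>V_1$, so both reduced functionals are strictly \emph{concave}). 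The hypothesis that actually makes the purely convex argument work is $V_1,V_2>\frac{K_0}{3}$, or any condition forcing both $2V_1+V_2>K_0$ and $V_1+2V_2>K_0$. The paper contains no fallback for the concave regime; your sketch of one (first-order condition via a monotone switching function and a bathtub argument) is the right idea for repairing the statement as literally written, but you do not carry it out, so as it stands your proposal — like the paper's proof — establishes the theorem only in the genuinely convex range of $(V_1,V_2)$ and leaves the remaining sliver $\frac{K_0}{4}<V_i$, $2V_1+V_2\leq K_0$ (or symmetrically) open. If you complete the concave branch you will have proved something strictly stronger than what the paper's argument actually delivers.
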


\paragraph{Regarding "the price of anarchy" and the uniqueness of Nash equilibria}
We conclude with two remarks about Theorem \ref{Th:NashLarge}.
First, regarding the uniqueness of Nash equilibria, we can conclude that it does not hold in general. Indeed, consider the conclusion of Theorem \ref{Th:NashLarge} and then compare it with the following analysis: if we assume that 
\[ K_0=1\,, V_1=V_2=\frac{1}3\,, \kappa_1=\kappa_2=1\] and if we let 
\[ \overline \alpha_1=\overline\alpha_2\equiv \frac13\] then it is readily checked that $(\overline \alpha_1,\overline\alpha_2)$ is also a Nash equilibrium: indeed, this follows from the consideration of Remark \ref{Re:KConstant} and the fact that with these definitions we have $\overline \alpha_1=\frac{K_0-\overline\alpha_2}2$ whence the conclusion. We are thus left with two different Nash equilibria, the one given by Theorem \ref{Th:NashLarge} and the constant one $(\overline \alpha_1,\overline \alpha_2)$. In particular, we can not expect the uniqueness of Nash equilibria to hold.

 Second, we can use this particular example to illustrate a concept known, in economics, as the "price of anarchy". As we sketched briefly in the introduction to our paper, the price of anarchy quantifies the insufficiency of selfish strategies when compared to cooperative strategies. In other words, is it true that, in general, the two players would be better off collaborating and then sharing the common fishing output rather than competing in a selfish manner? Consider once again the Nash equilibrium $(\overline \alpha_1,\overline \alpha_2)$ and now assume that, instead of competing against each other, the two players united their strength, and decided to solve 
 \begin{equation}\label{Eq:Us} \max_{\alpha \in \mathcal M_\leq(\frac12,\frac23)}J^1(\alpha).\end{equation}
 In the end they would simply split the total fishing outcome associated with an optimal strategy $\alpha^*$. However, from Remark \ref{Re:KConstant}, the unique solution of \eqref{Eq:Us} is $\overline \alpha=\frac12\neq \alpha_1^*+\alpha_2^*$. Thus, 
 \[ I_1^1(\overline\alpha_1 ,\overline\alpha_2 )+I_2^1(\overline\alpha_1 ,\overline\alpha_2 )<J^1(\overline \alpha):\] the total fishing output is worse than if the players had convened a strategy before playing. 
 
%

For further discussion of this concept of "price of anarchy", we refer to \cite{Johari,Roughgarden}.

{\color{blue}
If we consider a game between $n$ players, and we leave a side the integral bound\footnote{the game always has an implicit bound given by the condition of existence of a positive solution} we can construct constant Nash equilibria by building a recurrence. Indeed, we assume $K(x)=1$, the players are indistinguishable, hence, in the search of constant strategies one can write:
$$\alpha_{k+1}=\mathrm{argmax}_{\alpha}\theta_\alpha\left(\left(1-\sum_{i=1}^{n-1}\alpha_{k-i}\right)-\theta_\alpha\right)=\frac{1}{2}\left(1-\sum_{i=1}^{n-1}\alpha_{k-i}\right) $$
where by $\theta_\alpha$ we understand the dependence of the function $\theta$ with the potential $\alpha$ being optimized while the others $\alpha_{k-i}$ from $i=1,..m-1$ are fixed. This recurrence has a fixed point, which is a Nash equilibria for all $\mu$, all players play the constant strategy:
$$\alpha^*=\frac{1}{1+n},$$
the associated state to this Nash equilibria is
$$\theta_{\alpha^*}=1-\frac{n}{n+1}\underset{n\to +\infty}{\longrightarrow} 0 .$$
 Therefore, concluding: For  all $\mu>0$, there exist a Nash equilibria $\overrightarrow{\alpha^*}=(\alpha_1^*,\alpha_2^*,...,\alpha_n^*)$ such that:
 \begin{equation*}
 \frac{1}{4}=\max_{\overrightarrow{\alpha}}\int_\Omega \left(\sum_{i=1}^n\alpha_i\right)\theta_{\overrightarrow{\alpha}}dx > \int_\Omega \left(\sum_{i=1}^n\alpha_i^*\right)\theta_{\overrightarrow{\alpha^*}}dx \underset{n\to +\infty}{\longrightarrow} 0
 \end{equation*}

}

\subsection{Bibliographical references}\label{Se:Bib}
As there are several bodies of literature the present work fits in, we split the detailed presentation of our references accordingly.
\paragraph{Optimisation problem in spatial ecology}
Overt the past two decades, a wide range of efforts have been devoted to provide a better mathematical understanding of spatially heterogeneous phenomena. Indeed, after the pioneering works of Fisher, Kolmogorov, Petrovski and Piskunoff \cite{Fisher,KPP}, a wide body of literature was produced in an attempt to grasp fine propagation or invasion phenomena but, more recently, a new line of research has emerged that strongly emphasises the influence of heterogeneous reaction terms. After the works of Shigesada and Kawasaki, which have provided a first qualitative insight into the influence of the geometry of environments \cite{ShigesadaKawaski} on the survival of populations, and several seminal works of Cantrell and Cosner \cite{CantrellCosner1,31806255e7f648d5b65ff02c30c4f539,CantrellCosner,MR1105497}, a point of view that became extremely fruitful was that of \emph{optimising the spatial heterogeneity}. In other words: which is the optimal heterogeneity from the point of population dynamics? Of course, we need to specify which criteria are considered when using the word "optimal", but let us point out that this way of looking at the question brought forth combinations of PDE or ODE techniques and of optimal control theory.  Let us also, on the topic of optimal control of biological models, point to the monograph \cite{lenhart2007optimal}. A typical instance of optimal control problem of the type under study in the present paper is that of the optimal survival ability. A spectral optimisation problem, it has sparked a wealth of scientific articles devoted to its understanding and is by now fairly well understood \cite{BHR,31806255e7f648d5b65ff02c30c4f539,henrot2006,KaoLouYanagida,LamboleyLaurainNadinPrivat,LouYanagida,MazariNadinPrivat}. Let us point out that, in studying this problem, \cite{BHR} features what is, to the best of our knowledge, the first use of rearrangement techniques and isoperimetric inequalities to spatial ecology problems.

More recently, a new question that has drawn a lot of attention from the mathematical community is that of the optimisation of the total population size. In other words, how should we spread resources in logistic-diffusive models in order to maximise the total population size? Originating in the works of Lou \cite{LouInfluence,Lou2008} this question was then explored in details in a series of works \cite{heo2021ratio,InoueKuto,LiangLou,LiangZhang,LouNagaharaYanagida,MRBSIAP,Mazari2020,Mazari2021,NagaharaYanagida}. Of particular relevance in the context of the total population size was the \emph{bang-bang property}: are optimisers for the total population size bang-bang functions? After several partial results \cite{Mazari2020,NagaharaYanagida} the answer was proved to be yes in \cite{Mazari2021}. It should be noted that in the proof of Theorem \ref{Th:Convex} we build on the techniques of \cite{Mazari2021} to prove a bang-bang property for optimal fishing strategies.

\paragraph{Optimal fishing problems} Of course, all the problems we described in the previous paragraphs describe, in a way, "nice" problems, in the following sense: since we are trying to optimise a criterion with respect to resources, it is expected that adding resources will prove beneficial. One of the conclusion of \cite{Mazari2021} is indeed that, for monotonic bilinear functionals (\emph{i.e.} that increasing the resources increases the criterion) the bang-bang property holds. However, the case under study in this paper is quite different since, as we already touched upon, the problem of overfishing makes it so that the functional we are considering is no longer monotonic: it makes no sense to fish as much as we can for we may risk killing all the population. In that regard, our paper can be seen as a first detailed analysis of an optimal control problem for spatially heterogeneous fishing problems.

Of course, several authors have considered many different aspects of optimal fishing problems before. While it is impossible to list all these contributions here, let us single out \cite{Cooke1986}, where a survey of the early works (e.g. one-dimensional harvesting models, stochastic harvesting models...) is           presented and \cite{Braverman2009} where several types of models are considered, including the logistic diffusive models, but where the diffusion operator would be (if we were to adopt our notations) $\Delta\left(\frac\cdot\alpha\right)$, which changes the qualitative behaviour of the optimisation problem dramatically. Notably, it is not possible to lift their results to the case of non-regular fishing strategies $\alpha$ (that may be discontinuous for instance).

\paragraph{Nash equilibria in optimal control theory}
Several recent contributions deal with the existence and computation of Nash equilibria in optimal control theory. Let us single out two of these works, namely, \cite{Carvalho_2018,Fern_ndez_Cara_2021} . In these works, the functionals one seeks Nash equilibria for are of tracking-type (in the sense that we seek to minimise the distance to certain objective functions) and, very importantly, consider linearly controlled PDEs with $L^2$ penalisations of the constraints. This changes the features of the optimisation problem drastically. In \cite{Cal_Campana_2020} on the other hand, the question of existence and computation of Nash equilibria in bilinear problems, but for ODE models. Our paper is, to the best of our knowledge, a first contribution to the qualitative analysis of $L^\infty-L^1$ constrained bilinear optimal control problems with a cost function that is not of tracking-type.

\subsection{Plan of the paper}
The proofs of the theorems of the paper are grouped by the tools  used in their proof. In section \ref{Se:1} we give the proof of Theorems \ref{Th:MonotonicityTrue}, \ref{Th:MonotonicityFalse}, \ref{Th:Concave} and \ref{Th:Convex} as they all rely strongly on the computation of first and second-order Gateaux derivatives of the functional. In section \ref{Se:2}, the proofs of the asymptotic behaviours described in Theorems \ref{Th:AsymptoticSingle}  are presented. Finally, we gatherd in section \ref{Se:3} the proofs of those results dealing with multiple player games, Theorems \ref{Th:NashExists} and \ref{Th:NashLarge}.

\section{Proofs of Theorems \ref{Th:MonotonicityTrue}, \ref{Th:MonotonicityFalse}, \ref{Th:Concave} and \ref{Th:Convex}}\label{Se:1}
\paragraph{Notational simplifications}Throughout this section we investigate the influence of the range of the parameter $V_0$ on the optimisation problems \eqref{Pv:1Ineq}-\eqref{Pv:1Eq}, and we thus drop the diffusivity $\mu$ from all subscripts. Henceforth, $\theta_\alpha$ denotes the solution of \eqref{Eq:MainSingle}, and we set 
\[ J:\alpha \mapsto \fint_\O \alpha \theta_\alpha.\]
Furthermore, the proofs of the three theorems under scrutiny derive from the computations of the first and second order Gateaux-derivatives of the functional $J$. We recall that, for an admissible fishing strategy $\alpha$, an admissible perturbation $h$ at $\alpha$ is a function $h\in L^2(\O)$ such that there exists two sequences $\{h_n\}_{n\in\N}\in L^2(\O)^\N$, $\{\e_n\}_{n\in \N}\in \left(\R_+\backslash\{0\}\right)^\N$ satisfying:
\[ \e_n\underset{n\to \infty}\rightarrow 0\,, h_n\underset{n\to \infty}\rightarrow h \text{ in }L^2(\O)\text{ and, for any } n\in \N\,, \alpha+\e_n h_n\text{ is admissible.}\]

Whenever $\fint_\O \alpha<K_0$, we can adapt in a straightforward manner the proof of \cite[Lemma 4.1]{Ding2010} and prove that the functional $J$ and the map $\alpha\mapsto \theta_\alpha$ are  twice Gateaux-differentiable. In the first part of this section we give these Gateaux-derivatives in expanded form, and analyse their specific features when proving our results.

\subsection{Computations of the first and second-order Gateaux-derivatives of the functional $J_\mu$}
We fix an admissible fishing strategy $\alpha$ and an admissible perturbation $h$ at $\alpha$. At this point, since we do not specify in which admissible set we work, an admissible perturbation is any $h\in L^2(\O)$. From the computations of \cite[Lemma 4.1]{Ding2010}, the first, respectively  second, order Gateaux-derivative of $\alpha \mapsto \theta_\alpha$ at $\alpha$ in the direction is the unique solution $\dot \theta_\alpha$ of the equation 
\begin{equation}\label{Eq:DotSingle}
\begin{cases}
-\mu \Delta \dot \theta_\alpha-\dot \theta_\alpha \left(K-\alpha-2\theta_\alpha\right)=-h  \theta_\alpha&\text{ in }\O\,, 
\\ \frac{\partial \dot \theta_\alpha}{\partial \nu}=0,\end{cases}\end{equation} respectively the unique solution $\ddot\theta_\alpha$ of the equation 
\begin{equation}\label{Eq:DdotSingle}
\begin{cases}
-\mu \Delta \ddot \theta_\alpha-\ddot\theta_\alpha \left(K-\alpha-2\theta_\alpha\right)=-2h\dot \theta_\alpha-2\dot \theta_\alpha^2&\text{ in }\O\,, 
\\ \frac{\partial \ddot \theta_\alpha}{\partial \nu}=0&\text{ on } \partial \O.\end{cases}\end{equation}
\begin{remark}\label{Re:ExistenceFredholm}
Existence and uniqueness of solutions of \eqref{Eq:DotSingle}-\eqref{Eq:DdotSingle} follow from the following crucial observation \cite[Comment after eq. (2.6)]{LouInfluence}: from \eqref{Eq:MainSingle}, the first eigenvalue of the operator $-\mu\Delta-(K-\alpha-\theta_\alpha)$ is zero. From the monotonicity of the eigenvalue, the first eigenvalue of $-\mu \Delta-(K-\alpha-2\theta_\alpha)$ is positive. The existence and uniqueness of solutions to \eqref{Eq:DotSingle}-\eqref{Eq:DdotSingle} then follow from a standard variational argument.
\end{remark}
Similarly the first, respectively second, order Gateaux-derivative of the map $J$ at $\alpha$ in the direction $h$ is given by the expression 
\begin{equation}\label{Eq:DotJ1}
\dot J(\alpha)[h]=\fint_\O h\theta_\alpha+\fint_\O \alpha \dot \theta_\alpha,
\end{equation}
respectively by 
\begin{equation}\label{Eq:DdotJ1}
\ddot J(\alpha)[h,h]=2\fint_\O h\dot\theta_\alpha+\fint_\O \alpha \ddot \theta_\alpha.\end{equation}
We need to introduce an adjoint state in order to give equations \eqref{Eq:DotJ1}-\eqref{Eq:DdotJ1}  tractable expressions.

We introduce the unique solution $p_\alpha$ of 
\begin{equation}\label{Eq:Adjoint}
\begin{cases}
-\mu \Delta p_\alpha -p_\alpha(K-\alpha-2\theta_\alpha)=\alpha&\text{ in }\O
\\ \frac{\partial p_\alpha}{\partial \nu}=0&\text{ on }\partial \O.\end{cases}\end{equation}
The following properties are obtained by adapting the reasoning of \cite[Lemma 13]{Mazari2021}, which simply relies on the aforementioned Remark \ref{Re:ExistenceFredholm} that the first eigenvalue of $-\mu \Delta-(K-\alpha-2\theta_\alpha)$ is positive:
\begin{lemma}\label{Le:AdjEx}
There exists a unique solution $p_\alpha$ of \eqref{Eq:Adjoint}. Furthermore, if $\alpha\geq 0\,, \alpha\neq 0$, 
\[ \inf_{\overline \O}p_\alpha>0.\]
\end{lemma}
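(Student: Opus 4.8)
The plan is to treat the three assertions — existence, uniqueness, and strict positivity — separately, using throughout the single structural fact recorded in Remark~\ref{Re:ExistenceFredholm}, namely that the principal Neumann eigenvalue $\lambda_1$ of the operator $L_\alpha := -\mu\Delta - (K-\alpha-2\theta_\alpha)$ is strictly positive. Writing $c_\alpha := K - \alpha - 2\theta_\alpha \in L^\infty(\O)$, this positivity is equivalent, via the Rayleigh quotient, to the coercivity estimate $\int_\O\big(\mu|\nabla u|^2 - c_\alpha u^2\big) \geq \lambda_1 \int_\O u^2$ for every $u \in H^1(\O)$. For existence and uniqueness I would apply Lax--Milgram to the bilinear form associated with \eqref{Eq:Adjoint} on $H^1(\O)$: coercivity follows from the above, and the right-hand side $\alpha \in L^\infty(\O) \subset L^2(\O)$ defines a bounded functional, so a unique weak solution $p_\alpha$ exists. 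Since $\partial\O$ is $\mathscr C^2$ and $c_\alpha, \alpha \in L^\infty(\O)$, elliptic regularity then places $p_\alpha$ in $W^{2,q}(\O)$ for all finite $q$, hence in $\mathscr C^1(\overline\O)$ by Sobolev embedding; in particular $p_\alpha$ is continuous up to the boundary, which will matter for the last step.

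For the sign, I would first show $p_\alpha \geq 0$ by testing the weak formulation of \eqref{Eq:Adjoint} against $\varphi = -p_\alpha^-$, where $p_\alpha^- := \max(-p_\alpha,0)$. The left-hand side reduces to $\int_\O\big(\mu|\nabla p_\alpha^-|^2 - c_\alpha (p_\alpha^-)^2\big)$, which by coercivity is at least $\lambda_1\int_\O (p_\alpha^-)^2$, while the right-hand side equals $-\int_\O \alpha\, p_\alpha^- \leq 0$ because $\alpha \geq 0$. Since $\lambda_1 > 0$ this forces $p_\alpha^- \equiv 0$, i.e. $p_\alpha \geq 0$. To upgrade this to strict positivity I would rewrite \eqref{Eq:Adjoint} as $-\mu\Delta p_\alpha + \Lambda p_\alpha = (\Lambda + c_\alpha)p_\alpha + \alpha$ with $\Lambda := \Vert c_\alpha\Vert_{L^\infty(\O)}$; then $\Lambda + c_\alpha \geq 0$, $p_\alpha \geq 0$ and $\alpha \geq 0$ together make the right-hand side nonnegative, so $p_\alpha$ is a nonnegative supersolution of $-\mu\Delta + \Lambda$ with $\Lambda > 0$, a setting in which the strong maximum principle applies.

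The hard part is precisely this last positivity step, where one must rule out a zero minimum both inside $\O$ and on $\partial\O$. An interior zero would, by the strong maximum principle, force $p_\alpha \equiv 0$, which upon substitution into \eqref{Eq:Adjoint} yields $\alpha \equiv 0$, contradicting the hypothesis $\alpha \neq 0$. A boundary zero is where the Neumann condition must be used: Hopf's lemma would give $\partial_\nu p_\alpha < 0$ at such a point, contradicting $\partial_\nu p_\alpha = 0$. Hence $p_\alpha > 0$ everywhere on $\overline\O$, and since $p_\alpha$ is continuous on the compact set $\overline\O$ it attains a strictly positive minimum, giving $\inf_{\overline\O} p_\alpha > 0$. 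The two subtleties to handle carefully are the shift by $\Lambda$ — needed so that the zeroth-order coefficient has the correct sign for the maximum principle to apply to the rewritten equation — and the compatibility of Hopf's lemma with the Neumann boundary condition, which is exactly what converts a potential boundary minimum into a contradiction.
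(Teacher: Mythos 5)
Your proof is correct and follows essentially the route the paper intends: the paper gives no written proof of Lemma \ref{Le:AdjEx}, instead citing the adaptation of \cite[Lemma 13]{Mazari2021} and noting that everything hinges on the positivity of the first eigenvalue of $-\mu\Delta-(K-\alpha-2\theta_\alpha)$ (Remark \ref{Re:ExistenceFredholm}), which is exactly the coercivity you exploit for Lax--Milgram, for the sign of $p_\alpha^-$, and (via the shift by $\Lambda$) for the strong maximum principle and Hopf's lemma. Your write-up supplies the details the paper outsources, and I see no gap.
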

Now, if we multiply \eqref{Eq:Adjoint} by $\dot\theta_\alpha$ and integrate by parts, and, similarly, multiply \eqref{Eq:DotSingle} by $p_\alpha$ and integrate by parts we derive the equality 
\begin{align*}
\fint_\O \alpha \dot \theta_\alpha&=\mu\fint_\O \langle \n \dot \theta_\alpha\,, \n p_\alpha\rangle-\fint_\O p_\alpha \dot \theta_\alpha \left(K-\alpha-2\theta_\alpha\right)
\\&=-\fint_\O hp_\alpha \theta_\alpha,
\end{align*}
so that 
\[ \dot J(\alpha)[h]=\fint_\O (1-p_\alpha)\theta_\alpha h.\]
Similarly, we obtain 
\[\frac12 \ddot J(\alpha)[h]=\fint_\O  h\dot \theta_\alpha-\fint_\O p_\alpha h\dot \theta_\alpha-\fint_\O p_\alpha \dot \theta_\alpha^2.\] We have thus proved the following lemma:
\begin{lemma}\label{Le:GD}
The first and second order Gateaux-derivative of the functional admit the following expressions:
\begin{equation}\label{Eq:DotJ}
\dot J(\alpha)[h]=\fint_\O (1-p_\alpha)\theta_\alpha h,\end{equation} and 
\begin{equation}\label{Eq:DdotJ}
\frac12\ddot J(\alpha)[h,h]=\fint_\O (1-p_\alpha)h\dot \theta_\alpha-\fint_\O p_\alpha \dot\theta_ \alpha^2.
\end{equation}
\end{lemma}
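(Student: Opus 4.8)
The plan is to start from the explicit but intractable expressions \eqref{Eq:DotJ1}--\eqref{Eq:DdotJ1}, in which the derivatives of $J$ still involve the sensitivity states $\dot\theta_\alpha$ and $\ddot\theta_\alpha$ through the terms $\fint_\O \alpha\dot\theta_\alpha$ and $\fint_\O \alpha\ddot\theta_\alpha$, and to eliminate these by the adjoint state $p_\alpha$ of \eqref{Eq:Adjoint}. The entire argument hinges on one structural observation: the linearized operator $L:=-\mu\Delta-(K-\alpha-2\theta_\alpha)$ (with Neumann conditions) is the common left-hand side of \eqref{Eq:DotSingle}, \eqref{Eq:DdotSingle} and \eqref{Eq:Adjoint}, and it is formally self-adjoint, since for Neumann data $\fint_\O (Lu)v=\mu\fint_\O \langle\n u,\n v\rangle-\fint_\O (K-\alpha-2\theta_\alpha)uv=\fint_\O u(Lv)$. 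This symmetry is precisely what lets the state--adjoint pairing collapse each derivative into an integral against $p_\alpha$.

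For the first-order term I would apply self-adjointness with $u=p_\alpha$ and $v=\dot\theta_\alpha$: the identity $\fint_\O (Lp_\alpha)\dot\theta_\alpha=\fint_\O p_\alpha(L\dot\theta_\alpha)$, combined with the right-hand sides $Lp_\alpha=\alpha$ from \eqref{Eq:Adjoint} and $L\dot\theta_\alpha=-h\theta_\alpha$ from \eqref{Eq:DotSingle}, yields $\fint_\O \alpha\dot\theta_\alpha=-\fint_\O p_\alpha\,h\,\theta_\alpha$. Substituting into \eqref{Eq:DotJ1} gives at once $\dot J(\alpha)[h]=\fint_\O h\theta_\alpha-\fint_\O p_\alpha\theta_\alpha h=\fint_\O (1-p_\alpha)\theta_\alpha h$, which is \eqref{Eq:DotJ}.

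The second-order identity follows by the same device one order up, taking $v=\ddot\theta_\alpha$. Using $L\ddot\theta_\alpha=-2h\dot\theta_\alpha-2\dot\theta_\alpha^2$ from \eqref{Eq:DdotSingle}, self-adjointness produces $\fint_\O \alpha\ddot\theta_\alpha=\fint_\O p_\alpha\left(-2h\dot\theta_\alpha-2\dot\theta_\alpha^2\right)$. Inserting this into \eqref{Eq:DdotJ1} and halving gives $\frac12\ddot J(\alpha)[h,h]=\fint_\O h\dot\theta_\alpha-\fint_\O p_\alpha h\dot\theta_\alpha-\fint_\O p_\alpha\dot\theta_\alpha^2=\fint_\O (1-p_\alpha)h\dot\theta_\alpha-\fint_\O p_\alpha\dot\theta_\alpha^2$, which is \eqref{Eq:DdotJ}.

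None of these steps is genuinely hard once the adjoint is in place; the real content lies not in the algebra but in the well-posedness that underwrites it. The existence and uniqueness of $\dot\theta_\alpha$, $\ddot\theta_\alpha$ and $p_\alpha$ are not automatic, because the bilinear form attached to $L$ is not coercive a priori. This is exactly what Remark \ref{Re:ExistenceFredholm} resolves: since the principal eigenvalue of $-\mu\Delta-(K-\alpha-\theta_\alpha)$ vanishes by \eqref{Eq:MainSingle} and the eigenvalue is strictly monotone in the potential, the principal eigenvalue of $L$ is strictly positive, so $L$ is invertible and the variational argument of Lemma \ref{Le:AdjEx} applies. That same positivity, together with the Neumann conditions, also legitimizes the integrations by parts (no boundary terms survive) that the self-adjointness identity silently uses. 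I therefore expect the only non-routine ingredient to be this spectral positivity; the derivation itself is a direct consequence of the self-adjoint structure of $L$.
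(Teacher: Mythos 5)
Your proposal is correct and follows essentially the same route as the paper: the paper likewise obtains $\fint_\O \alpha\dot\theta_\alpha=-\fint_\O p_\alpha h\theta_\alpha$ by multiplying \eqref{Eq:Adjoint} by $\dot\theta_\alpha$ and \eqref{Eq:DotSingle} by $p_\alpha$ and integrating by parts, which is exactly your self-adjointness identity for $L=-\mu\Delta-(K-\alpha-2\theta_\alpha)$, and then repeats the pairing with $\ddot\theta_\alpha$ for the second-order formula. Your added remarks on the spectral positivity underwriting the well-posedness match the role of Remark \ref{Re:ExistenceFredholm} and Lemma \ref{Le:AdjEx} in the paper.
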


\subsection{Computation of the first and second-order Gateaux-derivatives of $\overline J_{\delta,\mu}$}
We can adapt the proofs of the previous section to $\overline J_{\delta,\mu}$. Similar to the notational conventions we adopted above, we now denote by $\theta_{K+\delta m}$ the solution of \eqref{Eq:MainSingle} with $\alpha=K+\delta m$. We define, for any $m \in \mathcal N(\O)$, the unique solution $q_{\delta,m}$ of 
\begin{equation}\label{Eq:AdjointLargeFishing}
\begin{cases}
-\mu \Delta q_{\delta,m}-q_{\delta,m}\left(-\delta m-2\theta_{K+\delta m}\right)=K+\delta m&\text{ in }\O\,, 
\\ \frac{\partial q_m}{\partial \nu}=0&\text{ on }\partial \O.
\end{cases}\end{equation}
Similarly to Lemma \ref{Le:GD} we obtain the following expression:
\begin{lemma}\label{Le:GDLargeFishing}
The first and second order Gateaux-derivative of the functional admit the following expressions:
\begin{equation}\label{Eq:DotJ}
\dot{ \overline J}_{\delta,\mu}(m)[h]=\delta\fint_\O (1-q_{\delta,m})\theta_{K+\delta m} h,\end{equation} and 
\begin{equation}\label{Eq:DdotJ}
\frac12\ddot{ \overline J}_{\delta,\mu}(m)[h,h]=\delta\fint_\O (1-q_{\delta,m})h\dot \theta_{K+\delta m}-\fint_\O q_{\delta,m} \dot\theta_ m^2,
\end{equation}
where $\dot \theta_{K+\delta m}$ satisfies
\begin{equation}
\begin{cases}
-\mu \Delta \dot \theta_{K+\delta m}-\dot \theta_{K+\delta m}\left(-\delta m-2\theta_{K+\delta m}\right)=-\delta h \theta_{K+\delta m}&\text{ in }\O\,, 
\\ \frac{\partial \dot\theta_{K+\delta m}}{\partial \nu}=0&\text{ on }\partial \O.\end{cases}\end{equation}

\end{lemma}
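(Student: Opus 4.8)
The plan is to recognise that $\overline J_{\delta,\mu}$ is nothing but the composition of the single-player functional $J$ with the affine reparameterisation $A_\delta:m\mapsto K+\delta m$; that is, $\overline J_{\delta,\mu}(m)=J(K+\delta m)$. Since $A_\delta$ is affine with constant linear part $h\mapsto \delta h$, the chain rule gives, for any admissible direction $h$,
\[
\dot{\overline J}_{\delta,\mu}(m)[h]=\delta\,\dot J(K+\delta m)[h],\qquad
\ddot{\overline J}_{\delta,\mu}(m)[h,h]=\delta^2\,\ddot J(K+\delta m)[h,h],
\]
with no further second-order contribution precisely because $A_\delta$ has vanishing second derivative. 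Before invoking the formulas of Lemma \ref{Le:GD}, I first check that the Gateaux-differentiability hypotheses of the previous subsection apply at $\alpha=K+\delta m$: the constraint defining $\mathcal N(\O)$ forces $\fint_\O m\leq -m_0<0$, whence $\fint_\O(K+\delta m)=K_0+\delta\fint_\O m<K_0$, which is exactly the condition under which $J$ and $\alpha\mapsto\theta_\alpha$ were shown to be twice Gateaux-differentiable.

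Next I identify the auxiliary objects. Comparing the adjoint equation \eqref{Eq:Adjoint} evaluated at $\alpha=K+\delta m$ -- for which $K-\alpha=-\delta m$ -- with the definition of $q_{\delta,m}$, I see that the two boundary value problems coincide, so $q_{\delta,m}=p_{K+\delta m}$. Existence and positivity of this adjoint follow verbatim from Lemma \ref{Le:AdjEx} and the Fredholm alternative recalled in Remark \ref{Re:ExistenceFredholm}, applied to the operator $-\mu\Delta-(-\delta m-2\theta_{K+\delta m})$, whose first eigenvalue is positive. Likewise, differentiating the state equation in the $m$-variable in direction $h$ amounts to differentiating it in the $\alpha$-variable in direction $\delta h$; hence the linearised state $\dot\theta_{K+\delta m}$ appearing in the lemma satisfies $\dot\theta_{K+\delta m}=\delta\,\theta'$, where $\theta'$ denotes the solution of \eqref{Eq:DotSingle} with $\alpha=K+\delta m$, which is the object entering Lemma \ref{Le:GD}.

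Finally I substitute. Plugging $\alpha=K+\delta m$, $p_\alpha=q_{\delta,m}$ and $\theta'=\dot\theta_{K+\delta m}/\delta$ into the first identity of Lemma \ref{Le:GD} and multiplying by $\delta$ yields $\dot{\overline J}_{\delta,\mu}(m)[h]=\delta\fint_\O(1-q_{\delta,m})\theta_{K+\delta m}h$, the claimed first-order expression. For the second order, the same substitution turns $\tfrac12\ddot J(K+\delta m)[h,h]=\fint_\O(1-q_{\delta,m})h\,\theta'-\fint_\O q_{\delta,m}(\theta')^2$ into $\fint_\O(1-q_{\delta,m})h\,(\dot\theta_{K+\delta m}/\delta)-\fint_\O q_{\delta,m}(\dot\theta_{K+\delta m})^2/\delta^2$; multiplying by $\delta^2$ as dictated by the chain rule produces exactly $\delta\fint_\O(1-q_{\delta,m})h\,\dot\theta_{K+\delta m}-\fint_\O q_{\delta,m}(\dot\theta_{K+\delta m})^2$, the stated formula.

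The only real obstacle is bookkeeping: one must keep careful track of the powers of $\delta$ generated separately by the affine chain rule and by the $\delta$-rescaling relating the $m$- and $\alpha$-linearisations, and confirm that they combine to a single overall factor $\delta$ in the first derivative and to the asymmetric $(\delta,1)$ weighting of the two terms in the second. Alternatively, and equivalently, one may reproduce the integration-by-parts computation of the previous subsection line by line, with $K-\alpha$ replaced by $-\delta m$ throughout and the source term $-h\theta_\alpha$ replaced by $-\delta h\theta_{K+\delta m}$; this is the \emph{adaptation} referred to in the statement and leads to the same identities.
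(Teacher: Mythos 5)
Your proof is correct and is essentially the paper's argument made explicit: the paper's own proof of this lemma consists only of the remark that one ``adapts the proofs of the previous section,'' and your chain-rule computation through the affine map $m\mapsto K+\delta m$, together with the identifications $q_{\delta,m}=p_{K+\delta m}$ and $\dot\theta_{K+\delta m}=\delta\,\theta'$, is precisely that adaptation with the powers of $\delta$ tracked correctly. You also correctly read the $\dot\theta_m^2$ in the displayed second-order formula as a typo for $\dot\theta_{K+\delta m}^2$, and your verification that $\fint_\O(K+\delta m)<K_0$ justifies invoking the differentiability results at $\alpha=K+\delta m$.
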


\subsection{Proof of Theorems \ref{Th:MonotonicityTrue}-\ref{Th:MonotonicityFalse}: monotonicity of the functionals}

\paragraph{Plan of the proofs}
We recall that monotonicity, for instance for $J_\mu$, means that 
\[ \forall \alpha_1\,, \alpha_2\in \mathcal M_\leq(\kappa,V_0)\,,\alpha_1\leq \alpha_2\text{ a.e. }\Rightarrow J_\mu(\alpha_1)\leq J_\mu(\alpha_2).\]
However, by the mean value theorem, we know that, for any $\alpha_1\,, \alpha_2\in \mathcal M_\leq(\kappa,V_0)$, there exists $\xi\in [0;1]$ such that 
\begin{equation} J_\mu(\alpha_2)-J_\mu(\alpha_1)=\dot J_\mu(\alpha_1+\xi (\alpha_2-\alpha_1))[\alpha_2-\alpha_1]\end{equation} and thus Lemma \ref{Le:GD} yields the existence of $\xi\in [0;1]$ such that  \begin{equation} \label{TAF}J_\mu(\alpha_2)-J_\mu(\alpha_1)=\fint_{\O} (1-p_{\alpha_1+\xi (\alpha_2-\alpha_1)})\theta_{\alpha_1+\xi(\alpha_2-\alpha_1)}(\alpha_2-\alpha_1).
\end{equation}
We can thus read the monotonicity of $J_\mu$ on \eqref{TAF}: if $\alpha_1\leq \alpha_2$ almost everywhere, and since $\theta_{\alpha_1+\xi(\alpha_2-\alpha_1)}$ is positive on $\overline \O$ for any $\xi \in [0;1]$, obtaining the monotonicity of the functional boils down to deriving the sign of $1-p_{\alpha_1+\xi(\alpha_2-\alpha_1)}$. Thus the proof of Theorem \ref{Th:MonotonicityTrue} is simply to show that under certain volume constraints we have $p_\alpha<1$.

Similarly, using Lemma \ref{Le:GDLargeFishing}, to show the non-monotonicity of $\overline J_{\delta,\mu}$ it suffices to prove that, for $\delta>0$ small enough, $q_{\delta,m}>1$ for any $m\in \mathcal N_=(\O)$. This will imply that the optimal values of the two problems \eqref{Pv:1IneqLargeFishing}-\eqref{Pv:1EqLargeFishing} differ.

\begin{proof}[Proof of Theorem \ref{Th:MonotonicityTrue}]
Following the general idea explained in the plan of the proof, it suffices to prove the following lemma:
\begin{proposition}\label{Pr:LFA1}
The two constants $\kappa,\mu$ being fixed, we have the following property: for any $\delta>0$,  there exists $\e_1=\e_1(\O,\kappa,\mu,\delta)>0$ such that, for any $V_0\in (0;\e_1)$, for any $\alpha \in \mathcal M_\leq(\kappa,V_0)$, we have 
\[0<\inf_{\overline \O}p_\alpha\leq \sup_{\overline \O} p_\alpha\leq 1-\delta.\]
\end{proposition}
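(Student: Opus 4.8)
The lower bound $0<\inf_{\overline\O}p_\alpha$ is already contained in Lemma \ref{Le:AdjEx} (admissibility forces $\alpha\geq 0$, $\alpha\not\equiv 0$), so the entire content of the proposition is the upper bound $\sup_{\overline\O}p_\alpha\leq 1-\delta$. The plan is to prove the stronger statement that
\[ \sup_{\alpha\in\mathcal M_\leq(\kappa,V_0)}\Vert p_\alpha\Vert_{L^\infty(\O)}\underset{V_0\to 0}\longrightarrow 0, \]
which for fixed $\delta\in(0,1)$ immediately produces a threshold $\e_1$ below which $p_\alpha\leq 1-\delta$ everywhere. The heuristic is transparent: the adjoint equation \eqref{Eq:Adjoint} has right-hand side $\alpha$, and as $V_0\to 0$ the constraint $\fint_\O\alpha\leq V_0$ together with $0\leq \alpha\leq \kappa$ forces $\alpha\to 0$ strongly, so the source term of \eqref{Eq:Adjoint} vanishes while the associated operator stays uniformly invertible.

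First I would record the uniform bounds. The maximum principle applied to \eqref{Eq:MainSingle} gives $0\leq\theta_\alpha\leq 1$, and for every finite $p$ one has $\int_\O\alpha^p\leq \kappa^{p-1}\int_\O\alpha\leq \kappa^{p-1}V_0\Vol(\O)$, so $\alpha\to 0$ in every $L^p(\O)$, uniformly over the admissible class, as $V_0\to 0$. Next I would upgrade this to the convergence $\theta_\alpha\to\theta_0$ in $C^0(\overline\O)$, uniformly over $\mathcal M_\leq(\kappa,V_0)$, where $\theta_0$ is the unique positive solution of \eqref{Eq:MainSingle} with $\alpha=0$: since $\{\theta_\alpha\}$ is bounded in $L^\infty$, elliptic regularity bounds it in $W^{2,p}$ and hence in $C^{1,\gamma}$, and a compactness–contradiction argument (any sequence $\alpha_n$ with $V_0^{(n)}\to 0$ has $\theta_{\alpha_n}$ subconverging in $C^1$ to a solution of the $\alpha=0$ equation, necessarily $\theta_0$ by uniqueness) yields the uniform convergence. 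By the strong maximum principle and Hopf's lemma $\theta_0\geq c_0>0$ on $\overline\O$, so $\theta_\alpha\geq c_0/2$ once $V_0$ is small enough.

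This uniform lower bound on $\theta_\alpha$ is precisely what controls the adjoint operator. By Remark \ref{Re:ExistenceFredholm} the first eigenvalue of $-\mu\Delta-(K-\alpha-\theta_\alpha)$ is zero, and since
\[ -\mu\Delta-(K-\alpha-2\theta_\alpha)=\bigl[-\mu\Delta-(K-\alpha-\theta_\alpha)\bigr]+\theta_\alpha, \]
the min–max characterisation gives $\lambda_1\bigl(-\mu\Delta-(K-\alpha-2\theta_\alpha)\bigr)\geq\min_{\overline\O}\theta_\alpha\geq c_0/2$, uniformly in $\alpha$. This uniform spectral gap makes the bilinear form $B_\alpha(u,u)=\mu\fint_\O|\nabla u|^2-\fint_\O(K-\alpha-2\theta_\alpha)u^2$ uniformly coercive on $W^{1,2}(\O)$ (combine $B_\alpha(u,u)\geq(c_0/2)\Vert u\Vert_{L^2}^2$ with $B_\alpha(u,u)\geq\mu\Vert\nabla u\Vert_{L^2}^2-C\Vert u\Vert_{L^2}^2$ by a convex combination). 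Testing \eqref{Eq:Adjoint} against $p_\alpha$ then gives $\Vert p_\alpha\Vert_{W^{1,2}}\lesssim\Vert\alpha\Vert_{L^2}\to 0$. Finally I would bootstrap: rewriting \eqref{Eq:Adjoint} as $-\mu\Delta p_\alpha=(K-\alpha-2\theta_\alpha)p_\alpha+\alpha$, with coefficients bounded uniformly in $L^\infty$ and right-hand side small in every $L^p$, iterated elliptic regularity yields $\Vert p_\alpha\Vert_{L^\infty}\to 0$, which is the desired conclusion.

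I expect the main obstacle to be the \emph{uniformity} over the whole admissible class $\mathcal M_\leq(\kappa,V_0)$, rather than the estimate for a single fixed $\alpha$. The two delicate points are: the uniform lower bound $\theta_\alpha\geq c_0/2$, which underlies the uniform spectral gap and hence the uniform invertibility of the adjoint operator, and which I obtain only through the compactness argument; and the fact that $\alpha\to 0$ merely in $L^p$ for finite $p$ (not in $L^\infty$), so that passing from the $W^{1,2}$ estimate to the $L^\infty$ estimate genuinely requires the elliptic bootstrap in high dimension rather than a single Sobolev embedding.
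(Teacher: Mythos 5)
Your proof is correct and follows the same overall architecture as the paper's: uniform $C^0$ convergence of $\theta_\alpha$ to the unharvested state $\overline\theta$ via compactness, a uniform spectral gap for the linearised operator $-\mu\Delta-(K-\alpha-2\theta_\alpha)$, and then the energy estimate plus elliptic bootstrap to send $\Vert p_\alpha\Vert_{L^\infty}$ to zero. The one step where you genuinely diverge is the spectral gap (the paper's Lemma \ref{Le:LFA1}). The paper establishes $\lambda(K-\alpha-2\theta_\alpha)\geq a_0$ by proving the convergence of principal eigenvalues $\lambda(K-\alpha-2\theta_\alpha)\to\lambda(K-2\overline\theta)>0$, which requires a separate compactness argument on the eigenfunctions relegated to Appendix \ref{Ap:Ode}, and invokes \cite{LouInfluence} for the positivity of the limit eigenvalue. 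You instead use the observation that $\theta_\alpha$ is a positive principal eigenfunction of $-\mu\Delta-(K-\alpha-\theta_\alpha)$ with eigenvalue zero, so the Rayleigh quotient of the shifted operator $-\mu\Delta-(K-\alpha-2\theta_\alpha)=\bigl[-\mu\Delta-(K-\alpha-\theta_\alpha)\bigr]+\theta_\alpha$ is bounded below by $\inf_{\overline\O}\theta_\alpha$; the uniform gap then reduces to the uniform positive lower bound $\theta_\alpha\geq c_0/2$, which you already have from the uniform convergence and the positivity of $\overline\theta$. This is a quantitative shortcut: it gives an explicit constant $a_0=c_0/2$ at finite $V_0$ and makes the appendix's eigenvalue-convergence argument unnecessary (the paper essentially applies the same shift argument, but only to the limiting operator). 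The remaining steps --- testing \eqref{Eq:Adjoint} against $p_\alpha$ to get $\Vert p_\alpha\Vert_{W^{1,2}}\lesssim\Vert\alpha\Vert_{L^2}\to 0$, then bootstrapping to $L^\infty$ --- coincide with the paper's, and your closing remarks correctly identify uniformity over the admissible class as the point requiring care.
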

With this result at hand it is easy to obtain the monotonicity property: fixing $\delta>0$ and choosing the $\e_1$ given by Proposition \ref{Pr:LFA1}, we obtain 
\[\forall V_0\in (0;\e_1)\,,  \forall \alpha \in \mathcal M_\leq (\kappa,V_0)\,,\inf_{\overline \O} \left(\Psi_\alpha:=(1-p_\alpha)\theta_\alpha\right)\geq \delta \inf_{\overline \O}\theta_\alpha>0\] so that \eqref{TAF} implies the conclusion: with $\delta=\frac12$, the functional is monotonically increasing if $V_0<\e_1$ where $\e_1$ is given by Proposition \ref{Pr:LFA1}.

This  implies that any solution $\alpha^*$ of \eqref{Pv:1Ineq} satisfies 
\[ \fint_\O \alpha^*=V_0\] and thus that $\alpha^*$ is a solution of \eqref{Pv:1Ineq}. Indeed, should we have $\fint_\O \alpha^*<V_0$ we simply take any positive function $h\in L^\infty(\O)$ such that $\alpha^*+h\in \mathcal M_=(\kappa,V_0)$. By monotonicity of the functional, 
\[ J_\mu(\alpha^*+h)>J_\mu(\alpha^*),\] a contradiction.

It remains to prove Proposition \ref{Pr:LFA1}.
\begin{proof}[Proof of Proposition \ref{Pr:LFA1}]
Let us note that, as $\kappa$ is fixed and as $K\in L^\infty(\O)$, a classical application of the maximum principle there holds
\begin{equation}\label{Eq:Estima} \forall \mu>0\,, \Vert \tamu\Vert_{L^\infty(\O)}\leq \Vert K\Vert_{L^\infty(\O)}+\Vert \alpha\Vert_{L^\infty(\O)}=:M_0.\end{equation}
We will prove that
\[\lim_{V_0\to 0^+}\sup_{\alpha \in \mathcal M_\leq(\kappa,V_0)}\Vert p_\alpha\Vert_{L^\infty(\O)}=0.\]
To control the $L^\infty$ norm of any $p_\alpha$ we need to use the first eigenvalue $\lambda(K-\alpha-2\theta_\alpha)$ of the operator 
\[ -\mu \Delta-(K-\alpha-2\theta_\alpha)\] endowed with Neumann boundary conditions. As this operator is symmetric, we know that 
\begin{equation}\label{Eq:Rayleigh1} \lambda(K-\alpha-2\theta_\alpha)=\inf_{u\in W^{1,2}(\O)\,, \fint_\O u^2=1}\left(\mu \fint_\O|\n u|^2-\fint_\O u^2\left(K-\alpha-2\theta_\alpha\right)\right).\end{equation}
As in \cite[Proof of Lemma 2.1]{LouInfluence} (see also Remark \ref{Re:ExistenceFredholm} above) we know that for any $V_0\in (0;K_0)$ and any $\alpha \in \mathcal M_\leq(\kappa,V_0)$ there holds
\[\lambda(K-\alpha-2\theta_\alpha)>0.\]To obtain uniform regularity estimates on $p_\alpha$ we need to obtain a uniform lower bound on $\lambda(K-\alpha-2\theta_\alpha)$ as $V_0\to 0^+$.
\begin{lemma}\label{Le:LFA1}
There exists $a_0,\e_0>0$ such that
\[ \forall V_0\in(0;\e_0)\,, \forall \alpha \in \mathcal M_\leq (\kappa,V_0)\,,\quad \lambda(K-\alpha-2\theta_\alpha)\geq a_0.\]
\end{lemma}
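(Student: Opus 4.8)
The plan is to reduce the spectral lower bound to a uniform lower bound on $\inf_{\overline\O}\theta_\alpha$, and then to establish the latter by an elliptic compactness argument. For the reduction, recall from Remark \ref{Re:ExistenceFredholm} that, $\theta_\alpha>0$ being a solution of \eqref{Eq:MainSingle}, the first eigenvalue of $-\mu\Delta-(K-\alpha-\theta_\alpha)$ vanishes. Let $\phi_\alpha>0$ be the principal eigenfunction of $-\mu\Delta-(K-\alpha-2\theta_\alpha)$, normalised by $\fint_\O\phi_\alpha^2=1$. Inserting $\phi_\alpha$ into the Rayleigh quotient \eqref{Eq:Rayleigh1} written for the potential $K-\alpha-\theta_\alpha$, and using the splitting $K-\alpha-2\theta_\alpha=(K-\alpha-\theta_\alpha)-\theta_\alpha$, one gets
\[
\lambda(K-\alpha-2\theta_\alpha)=\Bigl(\mu\fint_\O|\n\phi_\alpha|^2-\fint_\O\phi_\alpha^2(K-\alpha-\theta_\alpha)\Bigr)+\fint_\O\phi_\alpha^2\theta_\alpha\geq \fint_\O\phi_\alpha^2\theta_\alpha\geq \inf_{\overline\O}\theta_\alpha,
\]
the first bracket being nonnegative since it is the Rayleigh quotient of a normalised function for an operator whose first eigenvalue is $0$. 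Hence it suffices to exhibit $a_0,\e_0>0$ with $\inf_{\overline\O}\theta_\alpha\geq a_0$ for all $V_0\in(0;\e_0)$ and all $\alpha\in\mathcal M_\leq(\kappa,V_0)$.

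For this uniform lower bound I would argue by contradiction: suppose there are $V_0^{(n)}\to0$ and $\alpha_n\in\mathcal M_\leq(\kappa,V_0^{(n)})$ with $\inf_{\overline\O}\theta_{\alpha_n}\to0$. Since $0\leq\alpha_n\leq\kappa$ and $\fint_\O\alpha_n\leq V_0^{(n)}\to0$, interpolation gives $\alpha_n\to0$ strongly in $L^p(\O)$ for every finite $p$. The uniform estimate \eqref{Eq:Estima}, together with standard elliptic regularity applied to \eqref{Eq:MainSingle}, yields a uniform $W^{2,p}(\O)$ bound on $\theta_{\alpha_n}$; after extraction, $\theta_{\alpha_n}\to\theta_*$ in $C^1(\overline\O)$, with $\theta_*\geq0$ solving $-\mu\Delta\theta_*=\theta_*(K-\theta_*)$ under Neumann conditions, and $\inf_{\overline\O}\theta_*=0$.

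It remains to rule this out, and this is where the structure of the problem enters. If $\theta_*\not\equiv0$, the strong maximum principle and Hopf's lemma applied to $-\mu\Delta\theta_*+(\theta_*-K)\theta_*=0$ force $\theta_*>0$ on $\overline\O$, contradicting $\inf_{\overline\O}\theta_*=0$. If instead $\theta_*\equiv0$, then $\theta_{\alpha_n}\to0$ uniformly, and I would renormalise by $w_n:=\theta_{\alpha_n}/\Vert\theta_{\alpha_n}\Vert_{L^\infty(\O)}$, so that $\Vert w_n\Vert_{L^\infty(\O)}=1$ and $-\mu\Delta w_n=w_n(K-\alpha_n-\theta_{\alpha_n})$. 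The same regularity argument gives $w_n\to w_*$ in $C^1(\overline\O)$ with $\Vert w_*\Vert_{L^\infty(\O)}=1$, $w_*\geq0$ and $-\mu\Delta w_*=Kw_*$; thus $w_*>0$ is a principal eigenfunction showing $\lambda(K)=0$, which contradicts the elementary bound $\lambda(K)\leq-\fint_\O K=-K_0<0$ obtained by inserting the constant function into the Rayleigh quotient for $-\mu\Delta-K$. Both alternatives being impossible, the desired uniform bound holds.

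I expect the main obstacle to be precisely this last dichotomy: controlling the possible degeneration of $\theta_\alpha$ as the fishing effort $V_0$ vanishes. The reduction to $\inf_{\overline\O}\theta_\alpha$ and the elliptic compactness are routine, whereas ensuring that the limiting population cannot collapse to $0$ — equivalently, that the linearised persistence criterion $\lambda(K)<0$ survives in the limit — is the crux. If an explicit constant $a_0$ is wanted, the contradiction argument can be replaced by a quantitative variant, using a Harnack inequality (whose constant is uniform thanks to \eqref{Eq:Estima}) together with the integrated identity $\fint_\O\theta_\alpha^2=\fint_\O\theta_\alpha(K-\alpha)$ coming from the Neumann boundary conditions.
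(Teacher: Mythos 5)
Your proof is correct, and it takes a genuinely different route from the paper's. The paper proceeds by first establishing the uniform convergence $\theta_\alpha\to\overline\theta$ as $V_0\to0$ (via uniform $W^{2,p}$ bounds and the $L^1$ stability estimate of \cite{LouInfluence}), and then by showing that the principal eigenvalues converge, $\lambda(K-\alpha-2\theta_\alpha)\to\lambda(K-2\overline\theta)>0$, this last step being carried out by a compactness argument on the normalised eigenfunctions (postponed to Appendix \ref{Ap:Ode}), with the positivity of the limiting eigenvalue quoted from \cite{LouInfluence}. You instead derive the quantitative pointwise inequality $\lambda(K-\alpha-2\theta_\alpha)\geq\fint_\O\phi_\alpha^2\theta_\alpha\geq\inf_{\overline\O}\theta_\alpha$ by splitting the potential and using that $\lambda(K-\alpha-\theta_\alpha)=0$ --- which is in effect the mechanism behind the cited positivity result, made explicit --- and then reduce the lemma to a uniform lower bound on $\inf_{\overline\O}\theta_\alpha$, which you obtain by a compactness-plus-dichotomy argument; the second branch (renormalising by $\Vert\theta_{\alpha_n}\Vert_{L^\infty}$ and contradicting $\lambda(K)\leq-K_0<0$) correctly rules out total collapse of the population, and the first branch is settled by the strong maximum principle and Hopf's lemma. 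What your route buys is that it bypasses the eigenfunction-convergence lemma entirely and produces an eigenvalue bound with a transparent ecological meaning ($a_0$ is a lower bound on the standing population); what the paper's route buys is the identification of the limiting eigenvalue $\lambda(K-2\overline\theta)$ and, more importantly, the uniform convergence \eqref{Eq:CvUni} and the $\mathscr C^{1,s}$ estimates, which are not incidental: they are reused immediately afterwards to prove \eqref{Bloodborne} and later in the proof of Theorem \ref{Th:Concave}. Both arguments rest on the same elliptic compactness; yours is self-contained for the lemma as stated but would not spare the paper the convergence analysis it needs elsewhere.
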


\begin{proof}[Proof of Lemma \ref{Le:LFA1}]
We observe that from \eqref{Eq:Estima} and standard $L^p$ elliptic regularity theory, for any $p\in [1;+\infty)$ there exists a constant $M_p=M_p(\mu,\Omega)>0$ such that uniformly in $V_0$ and uniformly in $\alpha \in \mathcal M_\leq (\kappa,V_0)$ there holds
\[ \Vert \theta_\alpha\Vert_{W^{2,p}(\O)}\leq M_p.\] Using Sobolev embeddings this implies that for any $s \in [0;1)$ there exists a constant $C_s$ such that 
uniformly in $V_0$ and uniformly in $\alpha \in \mathcal M_\leq (\kappa,V_0)$ there holds
\begin{equation}\label{Eq:Holder1}\Vert \theta_\alpha\Vert_{\mathscr C^{1,s}(\O)}\leq C_s.\end{equation}
It is expected that as $V_0\to 0$ we should have $\theta_\alpha \to \overline \theta$ where $\theta$ is the solution of 
\begin{equation}\label{Eq:LFAEq}
\begin{cases}
-\mu \Delta \overline\theta-\overline\theta\left(K-\overline \theta\right)=0&\text{ in }\O\,, 
\\ \frac{\partial \overline \theta}{\partial \nu}=0&\text{ on }\partial \O\,, 
\\ \overline \theta\geq 0\,, \overline\theta\neq 0.\end{cases}\end{equation}
Let us show that this convergence is uniform in the following sense:
\begin{equation}\label{Eq:CvUni}
\lim_{V_0\to 0^+}\sup_{\alpha\in \mathcal M_\leq (\kappa,V_0)}\Vert \theta_\alpha-\overline \theta\Vert_{\mathscr C^0(\O)}=0.
\end{equation}
Argue by contradiction and assume there exists a sequence $\{V_k\}_{k\in \N}$, $c_1>0$ and, such that, for any $k\in \N$, there exists $\alpha_k\in \mathcal M_\leq(\kappa,V_k)$ such that 
\[ \Vert \theta_{\alpha_k}-\overline \theta\Vert_{\mathscr C^0(\O)}\geq c_1.\] From \eqref{Eq:Holder1}, we extract from $\{\theta_{\alpha_k}\}_{k\in \N}$ a $\mathscr C^1$ converging subsequence, still labeled $\{\theta_{\alpha_k}\}_{k\in \N}$ and its $\mathscr C^1$ limit $\theta_\infty$.
From \cite[Equation (2.4)]{LouInfluence} there exists a constant $c_0$ uniform in $V_0$ such that 
\[ \Vert \theta_\alpha-\overline\theta\Vert_{L^1(\O)}\leq c_0\Vert \alpha\Vert_{L^1(\O)}^{\frac13}.\]
We thus conclude that $\theta_\infty=\overline \theta$, a contradiction.

From this uniform convergence and the simplicity of the first eigenvalue $\lambda(K-\alpha-2\theta_\alpha)$, we  deduce that 
\begin{equation}\label{Eq:Ode} \lim_{V_0\to 0^+}\inf_{\alpha \in \mathcal M_\leq(\kappa,V_0)}\lambda(K-\alpha-2\theta_\alpha)=\lambda(K-2\overline \theta)>0\end{equation} where the last inequality comes from the aforementioned \cite[Proof of Lemma 2.1]{LouInfluence}. The proof of \eqref{Eq:Ode} is standard and we postpone it to appendix \ref{Ap:Ode}.

Lemma \ref{Le:LFA1} is proved.
\end{proof}
We can go back to the proof of Proposition \ref{Pr:LFA1}. We argue \emph{via} a standard bootstrap method, as follows: using $p_\alpha$ as a test function in \eqref{Eq:Adjoint} we obtain 
\[ \mu \fint_\O|\n p_\alpha|^2-\fint_\O p_\alpha^2\left(K-\alpha-2\theta_\alpha\right)\leq{\mathrm{Vol}(\O)^{-1}} \Vert \alpha\Vert_{L^2(\O)}\Vert p_\alpha\Vert_{L^2(\O)}\leq {\mathrm{Vol}(\O)^{-1}}\sqrt{\kappa V_0}\Vert p_\alpha\Vert_{L^2(\O)}.
\]
From the Rayleigh quotient formulation of eigenvalue \eqref{Eq:Rayleigh1} and the lower estimate of Lemma \ref{Le:LFA1} we deduce that 
\[ \Vert p_\alpha\Vert_{L^2(\O)}\leq \frac{ \sqrt{\kappa V_0}}{a_0\mathrm{Vol}(\O)},\] which in turn yields a uniform $W^{1,2}(\O)$ bound on the family $\{p_\alpha\}$. Using Sobolev embeddings, 
 the bootstrap method implies the following uniform bound: there exists $\e_{0}'>0$ such that, for any $p\in [1;+\infty)$, for any $V_0\in (0;\e_0')$, there exists $M_p$ such that for any $\alpha \in \mathcal M_\leq(\kappa,V_0)$, there holds
 \[ \Vert p_\alpha\Vert_{W^{2,p}(\O)}\leq M_p.\]
 It is then clear that for any sequence $\{V_k\}_{k\in \N}$ converging to zero and for any $\{\alpha_k\}_{k\in \N}\in \prod_{k\in \N} \mathcal M_\leq (\kappa,V_k)$, the sequence $\{p_{\alpha_k}\}_{k\in \N}$ converges in $\mathscr C^1(\O)$ to the solution $\overline p$ of 
 \[ \begin{cases}
 -\mu \Delta \overline p-\overline p(K-2\overline \theta)=0&\text{ in }\O\,, 
 \\ \frac{\partial \overline p}{\partial \nu}=0.\end{cases}\] As $\lambda(K-2\overline \theta)>0$, $\overline p=0$. Adapting the arguments of the proof of Lemma \ref{Le:LFA1} it is easily shown that such convergence is uniform and that 
 \begin{equation}\label{Bloodborne} \lim_{V_0\to 0}\sup_{\alpha \in \mathcal M_\leq(\kappa,V_0)}\Vert p_\alpha\Vert_{\mathscr C^1(\O)}=0.\end{equation}
 The proof of Proposition \ref{Pr:LFA1} is finished.

\end{proof}
Thus, as we explained how Proposition \ref{Pr:LFA1} implied Theorem \ref{Th:MonotonicityTrue}, the proof of Theorem \ref{Th:MonotonicityTrue} is complete.

\end{proof}

\begin{proof}[Proof of Theorem \ref{Th:MonotonicityFalse}]

For large fishing abilities, on the contrary, we will prove that 
\[ \forall \eta>0\,,  \exists \delta_1>0\,, \forall 0<\delta<\delta_1 \,, \forall m \in \mathcal N_\leq(\O)\,, \inf_{\overline \O}q_{\delta,m}\geq \frac1\eta.\]
To do so, we need to investigate the asymptotic behaviour of $q_{\delta,m}$ as $\delta \to 0$. Given that $q_{\delta,m}$ solves \eqref{Eq:AdjointLargeFishing}, this requires a knowledge of the behaviour of $\theta_{K+\delta m}$ as $\delta \to0$. This is the object of the following proposition:
\begin{proposition}\label{Pr:LargeFA1}
Uniformly in $m\in \mathcal N(\O)$ the following asymptotic expansion holds in $\mathscr C^1(\O)$: 
\[ \theta_{K+\delta m}=\delta m_0+\underset{\delta \to 0}O(\delta^2).\]\end{proposition}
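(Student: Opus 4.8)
The plan is to rescale and identify the leading-order profile. Since $\alpha_{\delta,m}=K+\delta m$, the state $\theta_{K+\delta m}$ solves $-\mu\Delta\theta_{K+\delta m}=\theta_{K+\delta m}(-\delta m-\theta_{K+\delta m})$ with homogeneous Neumann conditions. First I would fix the natural scale of the solution through the maximum principle: evaluating the equation at a maximum point of $\theta_{K+\delta m}$ and using $\theta_{K+\delta m}>0$ forces $\theta_{K+\delta m}\leq -\delta m\leq \delta$ there, whence the uniform bound $\Vert \theta_{K+\delta m}\Vert_{L^\infty(\O)}\leq \delta$, valid for every $m\in \mathcal N(\O)$. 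This motivates the rescaled unknown $w_\delta:=\theta_{K+\delta m}/\delta$, which satisfies $0\leq w_\delta\leq 1$ together with
\[ -\mu\Delta w_\delta=-\delta\, w_\delta(m+w_\delta)\quad\text{in }\O,\qquad \frac{\partial w_\delta}{\partial\nu}=0\quad\text{on }\partial\O. \]

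Next I would show that $w_\delta$ is $O(\delta)$-close to a constant, uniformly in $m$. The right-hand side above is bounded by $2\delta$ in $L^\infty(\O)$ and has zero average (by the Neumann condition), so standard $L^p$ elliptic regularity for the Neumann Laplacian gives $\Vert w_\delta-\overline{w_\delta}\Vert_{W^{2,p}(\O)}=O(\delta)$ for every finite $p$, where $\overline{w_\delta}:=\fint_\O w_\delta$; a Sobolev embedding then yields $\Vert w_\delta-\overline{w_\delta}\Vert_{\mathscr C^1(\O)}=O(\delta)$, with constants depending only on $\O,\mu,p$ and not on $m$. It then remains to pin down the scalar $\overline{w_\delta}$. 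Integrating the equation for $w_\delta$ over $\O$ produces the identity $\int_\O w_\delta(m+w_\delta)=0$; inserting $w_\delta=\overline{w_\delta}+O(\delta)$ and using $\fint_\O m=-m_0$ gives the scalar relation $\overline{w_\delta}(\overline{w_\delta}-m_0)=O(\delta)$, so that $\overline{w_\delta}$ lies within $O(\delta)$ of one of the two roots $0$ and $m_0$.

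The crux, and the step I expect to be the main obstacle, is to exclude the degenerate root $\overline{w_\delta}\to 0$, i.e.\ to prove the uniform lower bound $\theta_{K+\delta m}\gtrsim \delta$. The hard part will be handled by a spectral argument. Let $(\lambda_1(\delta),\phi_\delta)$ be the principal Neumann eigenpair of $-\mu\Delta+\delta m$, with $\phi_\delta>0$; first-order perturbation theory around the constant eigenfunction gives $\lambda_1(\delta)=\delta\fint_\O m+O(\delta^2)=-\delta m_0+O(\delta^2)$, uniformly in $m$ since $\Vert m\Vert_{L^\infty(\O)}\leq 1$. Rewriting the equation as $(-\mu\Delta+\delta m)w_\delta=-\delta w_\delta^2$ and pairing with $\phi_\delta$ (using self-adjointness) yields $\lambda_1(\delta)\int_\O w_\delta\phi_\delta=-\delta\int_\O \phi_\delta w_\delta^2$; dividing by $-\delta$ and using $w_\delta^2\leq \Vert w_\delta\Vert_{L^\infty(\O)}w_\delta$ gives $m_0+O(\delta)\leq \Vert w_\delta\Vert_{L^\infty(\O)}$. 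Combined with the fact that $w_\delta$ is $O(\delta)$-close to $\overline{w_\delta}$, this forces $\overline{w_\delta}\geq m_0/2$ for $\delta$ small, ruling out the root $0$. Together with $\overline{w_\delta}(\overline{w_\delta}-m_0)=O(\delta)$ we obtain $\overline{w_\delta}=m_0+O(\delta)$, hence $w_\delta=m_0+O(\delta)$ in $\mathscr C^1(\O)$ and finally $\theta_{K+\delta m}=\delta w_\delta=\delta m_0+O(\delta^2)$ in $\mathscr C^1(\O)$. Throughout one must check that every estimate is uniform over $\mathcal N(\O)$, which holds because all bounds depend on $m$ only through $\Vert m\Vert_{L^\infty(\O)}\leq 1$ and $\fint_\O m=-m_0$.
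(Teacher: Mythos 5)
Your argument is correct, and it shares the paper's starting point -- the rescaling $w_\delta=\theta_{K+\delta m}/\delta$, which turns the problem into a large-diffusivity limit of a logistic equation with resources $-m$ -- but from there the two routes diverge. The paper delegates the entire convergence $w_\delta\to m_0$ to the cited result \cite[Appendix A]{Mazari2020} and then bootstraps; you instead give a self-contained proof, and the genuinely new ingredient is the way you exclude the degenerate branch. Indeed, the elliptic estimate $\Vert w_\delta-\overline{w_\delta}\Vert_{\mathscr C^1(\O)}=O(\delta)$ together with the integrated identity only yields $\overline{w_\delta}(\overline{w_\delta}-m_0)=O(\delta)$, which is compatible with both $\overline{w_\delta}=O(\delta)$ and $\overline{w_\delta}=m_0+O(\delta)$; your pairing of $(-\mu\Delta+\delta m)w_\delta=-\delta w_\delta^2$ with the principal Neumann eigenfunction, using $\lambda_1(\delta)=-\delta m_0+o(\delta)$ and $\int_\O w_\delta\phi_\delta>0$, cleanly forces $\Vert w_\delta\Vert_{L^\infty(\O)}\geq m_0+o(1)$ and selects the nontrivial root. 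This is the standard dichotomy for the logistic equation (the trivial branch is ruled out precisely because the principal eigenvalue of $-\mu\Delta+\delta m$ is negative), and your version has the merit of making the uniformity in $m$ completely transparent, since every constant depends on $m$ only through $\Vert m\Vert_{L^\infty(\O)}\leq 1$ and $\fint_\O m=-m_0$. Two small remarks: the claim $\lambda_1(\delta)=\delta\fint_\O m+O(\delta^2)$ requires a short bootstrap on the eigenfunction to get the full $O(\delta^2)$, but your argument only uses $\lambda_1(\delta)/\delta\to-m_0$ uniformly, so nothing is at stake; and at the maximum-principle step one should either invoke the constant supersolution $\delta$ (valid since $m\geq-1$) or handle a boundary maximum via the Hopf lemma, both routine. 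What the paper's approach buys is brevity; what yours buys is a proof readable without opening the reference.
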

\begin{proof}[Proof of Proposition \ref{Pr:LargeFA1}]
We set $z_{\delta,m}:=\frac{\theta_{K+\delta m}}\delta.$ Direct computations show that $z_{\delta,m}$ solves
\begin{equation}
\begin{cases}
-\frac\mu\delta\Delta z_{\delta,m}-z_{\delta,m}\left(-m-z_{\delta,m}\right)=0&\text{ in }\O\,, 
\\ \frac{\partial z_{\delta,m}}{\partial \nu}=0&\text{ on }\partial \O\,, 
\\ z_{\delta,m}\geq 0\,, z_{\delta,m}\neq 0.
\end{cases}
\end{equation}
Thus the large fishing ability limit corresponds to a large-diffusivity limit for a standard logistic diffusive equation. We can now apply \cite[Appendix A-Convergence of the series]{Mazari2020}. Let us simply recall the main steps: first it is proved that, uniformly in $m$,  the asymptotic expansion 
\begin{equation}\label{Eq:Op}
z_{\delta,m}=m_0+\underset{\mu \to \infty}O\left(\delta\right)
\end{equation}
holds in $W^{1,2}(\O)$. We  then use a standard bootstrap argument, to obtain that \eqref{Eq:Op} holds in any $W^{2,p}(\O)$.
From the definition of $z_{\delta,m}$ we infer that $\theta_{K+\delta m}$ admits the expansion 
\begin{equation}\label{Eq:DA1}
\theta_{K+\delta m}=\delta m_0+\underset{\mu \to \infty}O\left(\delta^2\right)
\end{equation}in $\mathscr C^1(\O)$.
This concludes the proof.
\end{proof}
From this proposition we obtain an asymptotic expansion of the adjoint state $q_{\delta,m}$:

\begin{proposition}\label{Pr:LargeFA2}
Uniformly in $m\in \mathcal N(\O)$, the following asymptotic expansion holds in $\mathscr C^1(\O)$:
\[ q_{\delta, m}=\frac1\delta\cdot \frac{K_0}{m_0}+\underset{\delta \to 0}o\left(\frac1\delta\right).\]
\end{proposition}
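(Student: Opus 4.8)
The plan is to rescale the adjoint state so as to track the degeneration of the underlying eigenvalue. Following the notation of the previous proposition, set $r_{\delta,m}:=\delta q_{\delta,m}$. Multiplying \eqref{Eq:AdjointLargeFishing} by $\delta$, the function $r_{\delta,m}$ solves
\[
-\mu\Delta r_{\delta,m}+r_{\delta,m}\bigl(\delta m+2\theta_{K+\delta m}\bigr)=\delta\bigl(K+\delta m\bigr),\qquad \frac{\partial r_{\delta,m}}{\partial\nu}=0.
\]
Invoking Proposition \ref{Pr:LargeFA1}, the zeroth-order potential is $W_{\delta,m}:=\delta m+2\theta_{K+\delta m}=\delta(m+2m_0)+O(\delta^2)$ in $\mathscr C^1(\O)$, uniformly in $m$. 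Thus $r_{\delta,m}$ solves a problem whose potential and right-hand side are both of order $\delta$, which is exactly the balance that should force $r_{\delta,m}$ to remain bounded while $q_{\delta,m}$ blows up like $1/\delta$.

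The first and main step is a uniform a priori bound on $r_{\delta,m}$. Let $\lambda_{\delta,m}$ denote the first Neumann eigenvalue of $-\mu\Delta+W_{\delta,m}$; by Remark \ref{Re:ExistenceFredholm} it is positive. Testing the equation against $r_{\delta,m}$ and using the Rayleigh characterisation of $\lambda_{\delta,m}$ gives
\[
\lambda_{\delta,m}\fint_\O r_{\delta,m}^2\leq \mu\fint_\O|\n r_{\delta,m}|^2+\fint_\O r_{\delta,m}^2W_{\delta,m}=\delta\fint_\O(K+\delta m)r_{\delta,m}\leq C\delta\Bigl(\fint_\O r_{\delta,m}^2\Bigr)^{1/2},
\]
so that $\|r_{\delta,m}\|_{L^2}\leq C\delta/\lambda_{\delta,m}$. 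The crux is therefore the quantitative lower bound $\lambda_{\delta,m}\geq \tfrac{m_0}{2}\delta$ for $\delta$ small, uniformly in $m$. I would obtain this by splitting any test function as $u=c+w$ with $c=\fint_\O u$ and $\fint_\O w=0$: the Poincar\'e inequality controls $\mu\fint_\O|\n w|^2\geq \mu\lambda_P\fint_\O w^2$ (with $\lambda_P$ the first nonzero Neumann eigenvalue of $-\Delta$), while $\fint_\O u^2W_{\delta,m}=\delta\,c^2 m_0+O(\delta)\|w\|_{L^2}(|c|+\|w\|_{L^2})+O(\delta^2)$ after using $\fint_\O m=-m_0$; a Young inequality then shows the Rayleigh quotient is bounded below by $\tfrac{m_0}{2}\delta(c^2+\|w\|_{L^2}^2)+O(\delta^2)$. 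With this bound $\|r_{\delta,m}\|_{L^2}$ is uniformly bounded; the energy identity also yields $\fint_\O|\n r_{\delta,m}|^2=O(\delta)$, and a standard bootstrap on $-\mu\Delta r_{\delta,m}=\delta(K+\delta m)-r_{\delta,m}W_{\delta,m}$, whose right-hand side is uniformly bounded in every $L^p$, promotes this to a uniform $\mathscr C^{1,s}(\O)$ bound.

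It then remains to identify the limit and establish uniformity. From the uniform $W^{2,p}$ bound, every sequence $r_{\delta_k,m_k}$ with $\delta_k\to0$ has a $\mathscr C^1$-convergent subsequence, with limit $r_\infty$. Since $-\mu\Delta r_{\delta_k,m_k}=\delta_k(K+\delta_k m_k)-r_{\delta_k,m_k}W_{\delta_k,m_k}\to0$ in $L^p$, the limit satisfies $\Delta r_\infty=0$ with Neumann data, hence $r_\infty\equiv\bar r$ is constant. Integrating the equation for $r_{\delta,m}$ over $\O$ and dividing by $\delta$ gives $\fint_\O r_{\delta,m}(m+2m_0)+O(\delta)=K_0+O(\delta)$; passing to the limit with $\fint_\O m=-m_0$ (so that $\fint_\O(m+2m_0)=m_0$) forces $\bar r\,m_0=K_0$, i.e. $\bar r=K_0/m_0$, independently of the chosen sequence. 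A contradiction argument identical in spirit to that of Lemma \ref{Le:LFA1} upgrades this to the uniform convergence $\|r_{\delta,m}-K_0/m_0\|_{\mathscr C^1(\O)}\to0$, which is precisely the claimed expansion $q_{\delta,m}=\tfrac1\delta\cdot\tfrac{K_0}{m_0}+o(\tfrac1\delta)$ in $\mathscr C^1(\O)$.

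The main obstacle, as indicated, is the uniform lower bound $\lambda_{\delta,m}\gtrsim\delta$: mere positivity of $\lambda_{\delta,m}$, which is all that Remark \ref{Re:ExistenceFredholm} provides, is insufficient, since the whole point is that this eigenvalue degenerates to $0$, and one must capture its exact rate to keep $r_{\delta,m}=\delta q_{\delta,m}$ bounded. The constant-plus-mean-zero decomposition together with the sign information $\fint_\O m=-m_0$, so that the constant mode sees the strictly positive average $m_0$, is what yields this rate.
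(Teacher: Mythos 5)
Your argument is correct, and it starts from the same rescaling $r_{\delta,m}=\delta q_{\delta,m}$ as the paper; where it diverges is in what happens next. The paper divides the whole equation by $\delta$ once more, so that $z_{\delta,m}=\delta q_{\delta,m}$ solves $-\tfrac\mu\delta\Delta z_{\delta,m}-z_{\delta,m}(-m-2m_0+O(\delta))=K+\delta m$, recognises this as a large-diffusivity problem with effective diffusivity $\mu/\delta\to\infty$, and then simply invokes the expansion machinery of \cite[Appendix A]{Mazari2020} (a $W^{1,2}$ expansion followed by a bootstrap) to conclude $z_{\delta,m}=K_0/m_0+o(1)$ in $\mathscr C^1$. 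You instead keep the diffusivity fixed and organise the proof around the rate of degeneration of the first Neumann eigenvalue of $-\mu\Delta+W_{\delta,m}$: the mean/fluctuation splitting $u=c+w$, together with $\fint_\O(m+2m_0)=m_0>0$, gives $\lambda_{\delta,m}\gtrsim m_0\delta$, which exactly balances the $O(\delta)$ source term and keeps $r_{\delta,m}$ bounded; the bootstrap, compactness and identification of the constant limit by integrating the equation then finish the job. The two are morally the same computation — the Poincar\'e control of the fluctuation against the large diffusivity in \cite{Mazari2020} is your eigenvalue bound in disguise — but yours is self-contained and makes explicit the quantitative point that the paper buries in the citation, namely that mere positivity of the eigenvalue (Remark \ref{Re:ExistenceFredholm}) is useless here and one must capture the rate $\lambda_{\delta,m}\sim\delta$. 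The only step worth writing out more carefully is the identification of the limit: the convergence $\fint_\O r_{\delta_k,m_k}(m_k+2m_0)\to \bar r\,m_0$ uses that $r_{\delta_k,m_k}\to\bar r$ uniformly while $\fint_\O(m_k+2m_0)=m_0$ exactly for every $k$, so the varying $m_k$ cause no harm even though they need not converge.
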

\begin{proof}[Proof of Proposition \ref{Pr:LargeFA2}]
From Proposition \ref{Pr:LFA1} the function $q_{m,\delta}$ solves
\begin{equation}
-\mu \Delta q_{m,\delta}-\delta q_{m,\delta}\left(-m-2m_0+\underset{\delta \to 0}O(\delta)\right)=K+\delta m.\end{equation}
We set $z_{\delta,m}:=\delta q_{\delta,m}.$ Direct computations show that $z_{\delta,m}$ solves
\begin{equation}
\begin{cases}
-\frac\mu\delta\Delta z_{\delta,m}-z_{\delta,m}\left(-m-2m_0+\underset{\delta \to 0}O(\delta)\right)=K+\delta m&\text{ in }\O\,, 
\\ \frac{\partial z_{\delta,m}}{\partial \nu}=0&\text{ on }\partial \O.
\end{cases}
\end{equation}
We can apply exactly the same reasoning as in \cite{Mazari2020} to obtain that, in $\mathscr C^1(\O)$, we have 
\[ z_{\delta,m}=\frac{K_0}{m_0}+\underset{\delta \to 0}o\left(1\right), 
\]
whence the conclusion.\end{proof}

From Proposition \ref{Pr:LargeFA2} we obtain that, for any $\eta>0$ there exists $\delta_1>0$ such that, for any $0<\delta<\delta_1$, for any $m\in \mathcal N(\O)$, 
\[ 1-q_{\delta,m}\leq 1-\frac{K_0}{2m_0\delta}\leq -\frac1\eta.\]
However, we may proceed as in the proof of Theorem \ref{Th:MonotonicityTrue}: for any $m_1\leq m_2\,, m_1\neq m_2$,  $\overline J_{\delta,\mu}(m_2)<\overline J_{\delta,\mu}(m_1)$, so that the functional is no longer monotonic.

\end{proof}

\subsection{Proofs of Theorems \ref{Th:Concave}-\ref{Th:Convex}}

\paragraph{Reformulation of the second-order Gateaux-derivatives}
The proofs of the concavity of $J_\mu$ and the characterisation of maximisers of $\overline{J}_{\delta,\mu}$ as extreme points of the admissible sets rely on the type of computations carried out in \cite{M2021,Mazari2021,MPRobin} and in particular on a certain reformulation of the second-order Gateaux-derivatives of the functionals under consideration.

\paragraph{Reformulation of $\ddot{J}_\mu$}
We start with $\ddot J_\mu (\alpha)[h,h]$, which for notational convenience we write here $\ddot J(\alpha)[h,h]$ (in other words we have dropped the subscript $\mu$). Throughout the computations that follow we work with a fixed $\alpha \in \mathcal M_=(\kappa,V_0)$ and a fixed admissible perturbation $h$ at $\alpha$. We recall that from Lemma \ref{Le:GD} we have the expression 
\[ \frac12\ddot J(\alpha)[h,h]=\fint_\O(1-p_\alpha)h \dot \theta_\alpha-\fint_\O p_\alpha \dot \theta_\alpha^2.\] Now observe that we may rewrite 
\[ h:=\frac{\mu \Delta \dot \theta_\alpha+\dot \theta_\alpha\left(K-\alpha-2\theta_\alpha\right)}{\theta_\alpha},\]
whence, defining 
\[ \psi_\alpha:=\frac{1-p_\alpha}{\theta_\alpha},\] we derive 
\begin{align*}
\fint_\O (1-p_\alpha)h\dot\theta_\alpha&=\fint_\O \psi_\alpha\left(\mu\dot \theta_\alpha \Delta \dot\theta_\alpha+\dot\theta_\alpha^2(K-\alpha-2\theta_\alpha)\right)
\\&=\fint_\O \psi_\alpha\left(\frac\mu2\Delta\left(\dot \theta_\alpha^2\right)-\mu\left|\n \dot \theta_\alpha \right|^2+\dot\theta_\alpha^2(K-\alpha-2\theta_\alpha)\right)
\\&=-\mu\fint_\O \psi_\alpha \left|\n \dot \theta_\alpha \right|^2
\\&+\fint_\O \dot\theta_\alpha^2\left(\frac\mu2\Delta \psi_\alpha+\psi_\alpha(K-\alpha-2\theta_\alpha)\right).
\end{align*}
We have thus established the following lemma:
\begin{lemma}\label{Le:2GD}
For any $\alpha \in \mathcal M_=(\kappa,V_0)$, for any admissible perturbation $h$ at $\alpha$, there holds
\[ \frac12\ddot J(\alpha)[h,h]=-\mu\fint_\O \frac{1-p_\alpha}{\theta_\alpha}|\n \dot \theta_\alpha|^2+\fint_\O \dot \theta_\alpha^2\left(\frac\mu2\Delta\left(\frac{1-p_\alpha}{\theta_\alpha}\right)+\frac{1-p_\alpha}{\theta_\alpha}(K-\alpha-2\theta_\alpha)-p_\alpha\right).
\]
\end{lemma}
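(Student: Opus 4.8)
The plan is to start from the expression for $\frac12\ddot J(\alpha)[h,h]$ furnished by Lemma \ref{Le:GD}, namely
\[ \frac12\ddot J(\alpha)[h,h]=\fint_\O (1-p_\alpha)h\dot\theta_\alpha-\fint_\O p_\alpha\dot\theta_\alpha^2,\]
and to transform only the first (cross) term, leaving the second one untouched. The guiding idea is to eliminate the perturbation $h$ in favour of the state derivative $\dot\theta_\alpha$ by inverting the sensitivity equation \eqref{Eq:DotSingle}. Since $\theta_\alpha$ is bounded away from zero on $\overline\O$ (by the strong maximum principle, as $\theta_\alpha\geq 0$, $\theta_\alpha\neq 0$ solves \eqref{Eq:MainSingle}), I can divide by $\theta_\alpha$ and write $h=\bigl(\mu\Delta\dot\theta_\alpha+\dot\theta_\alpha(K-\alpha-2\theta_\alpha)\bigr)/\theta_\alpha$. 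Introducing $\psi_\alpha:=(1-p_\alpha)/\theta_\alpha$, the cross term then becomes $\fint_\O\psi_\alpha\bigl(\mu\dot\theta_\alpha\Delta\dot\theta_\alpha+\dot\theta_\alpha^2(K-\alpha-2\theta_\alpha)\bigr)$.

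Next I would use the elementary identity $\dot\theta_\alpha\Delta\dot\theta_\alpha=\tfrac12\Delta(\dot\theta_\alpha^2)-|\n\dot\theta_\alpha|^2$, which turns the integrand into $\mu\psi_\alpha\bigl(\tfrac12\Delta(\dot\theta_\alpha^2)-|\n\dot\theta_\alpha|^2\bigr)+\psi_\alpha\dot\theta_\alpha^2(K-\alpha-2\theta_\alpha)$. The contribution $-\mu\psi_\alpha|\n\dot\theta_\alpha|^2$ already has the desired shape, and the remaining task is to move the Laplacian in $\tfrac{\mu}2\fint_\O\psi_\alpha\Delta(\dot\theta_\alpha^2)$ off of $\dot\theta_\alpha^2$ and onto $\psi_\alpha$, so as to recover a factor $\dot\theta_\alpha^2$ in front of $\tfrac\mu2\Delta\psi_\alpha$. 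This is a double integration by parts (Green's second identity) applied to the pair $\psi_\alpha$ and $\dot\theta_\alpha^2$.

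The one point requiring care is the vanishing of the boundary contributions in that integration by parts. Both of them involve normal derivatives on $\partial\O$: the Neumann condition $\partial_\nu\dot\theta_\alpha=0$ from \eqref{Eq:DotSingle} gives $\partial_\nu(\dot\theta_\alpha^2)=2\dot\theta_\alpha\partial_\nu\dot\theta_\alpha=0$, while $\partial_\nu\psi_\alpha=0$ follows from the Neumann conditions on $\theta_\alpha$ and $p_\alpha$ (equations \eqref{Eq:MainSingle} and \eqref{Eq:Adjoint}); hence $\fint_\O\psi_\alpha\Delta(\dot\theta_\alpha^2)=\fint_\O\dot\theta_\alpha^2\Delta\psi_\alpha$. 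The regularity needed to justify these manipulations ($\theta_\alpha,p_\alpha\in W^{2,p}$ for every finite $p$, hence $\psi_\alpha,\dot\theta_\alpha\in\mathscr C^1(\O)$) is standard elliptic theory applied to \eqref{Eq:MainSingle}, \eqref{Eq:DotSingle} and \eqref{Eq:Adjoint}, the solvability being guaranteed by Remark \ref{Re:ExistenceFredholm}. Substituting the resulting expression for the cross term back into the formula of Lemma \ref{Le:GD} and grouping the two integrals carrying a factor $\dot\theta_\alpha^2$ (the term $-\fint_\O p_\alpha\dot\theta_\alpha^2$ supplies the isolated $-p_\alpha$ inside the bracket) yields exactly the claimed identity. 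I do not expect a genuine obstacle here beyond careful bookkeeping; the only substantive verification is the cancellation of the boundary terms, which is immediate from the Neumann conditions.
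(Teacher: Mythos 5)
Your proposal is correct and follows essentially the same route as the paper: the paper likewise inverts \eqref{Eq:DotSingle} to write $h=\bigl(\mu\Delta\dot\theta_\alpha+\dot\theta_\alpha(K-\alpha-2\theta_\alpha)\bigr)/\theta_\alpha$, introduces $\psi_\alpha=(1-p_\alpha)/\theta_\alpha$, applies the identity $\dot\theta_\alpha\Delta\dot\theta_\alpha=\tfrac12\Delta(\dot\theta_\alpha^2)-|\n\dot\theta_\alpha|^2$, and transfers the Laplacian onto $\psi_\alpha$ by a double integration by parts. Your explicit verification that the boundary terms vanish (via $\partial_\nu\psi_\alpha=0$ from the Neumann conditions on $\theta_\alpha$ and $p_\alpha$) is a detail the paper leaves implicit, but it is exactly the right justification.
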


\paragraph{Reformulation of $\overline J_{\delta,\mu}$}
We can carry the same type of computations for the second-order Gateaux derivative of $\overline J_{\delta,\mu}$: let $m\in \mathcal N(\O)$ be fixed and $h$ be an admissible perturbation at $h$. We know from Lemma \ref{Le:GDLargeFishing} that 
\[\frac12\ddot{\overline J}_{\delta,\mu}(m)[h,h]=\delta\fint_\O (1-q_{\delta,m})h\dot \theta_{K+\delta m}-\fint_\O q_{\delta,m}\dot\theta_{K+\delta m}^2.\]
However, we may rewrite 
\[h=\frac{\mu \Delta \dot \theta_{K+\delta m}+\dot  \theta_{K+\delta m}\left(-\delta m-2\theta_{K+\delta m}\right)}{\delta \theta_{K+\delta m}}\] and thus obtain, defining 
\[ \p_{\delta,m}:=\frac{q_{\delta,m}-1}{\theta_{K+\delta m}} \]
\begin{align*}
\delta\fint_\O (1-q_{\delta,m})h\dot \theta_{K+\delta m}&=\mu\fint_\O \p_{\delta,m}\left|\n \dot \theta_{K+\delta m}\right|^2+\fint_\O \dot\theta_{K+\delta m}^2\left(\frac\mu2\Delta \p_{\delta,m}-(-\delta m-2\theta_{K+\delta m})\right).
\end{align*}
Hence the following lemma holds:
\begin{lemma}\label{Le:2GDLargeFishing}
For any $m \in \mathcal N_=(\O)$, for any admissible perturbation $h$ at $m$, there holds
\begin{multline*} \frac12\ddot{\overline J}_{\delta,\mu}(m)[h,h]=\mu\fint_\O {\frac{q_{\delta,m}-1}{\theta_{K+\delta m}}}\left|\n \dot \theta_{K+\delta m}\right|^2\\+\fint_\O \dot\theta_{K+\delta m}^2\left(\frac\mu 2\Delta\left( {\frac{1-q_{\delta,m}}{\theta_{K+\delta m}}}\right)-(-\delta m-2\theta_{K+\delta m})\right)-\fint_\O q_{\delta,m}\dot \theta_{K+\delta m}^2.
\end{multline*}
\end{lemma}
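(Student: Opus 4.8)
The plan is to repeat, \emph{mutatis mutandis}, the algebraic manipulation that led to Lemma~\ref{Le:2GD}: the only changes are that the reaction term $(K-\alpha-2\theta_\alpha)$ is replaced by $(-\delta m-2\theta_{K+\delta m})$, the adjoint $p_\alpha$ by $q_{\delta,m}$, and an overall factor $\delta$ appears which cancels against the $\delta$ in the state equation for $\dot\theta_{K+\delta m}$. Before carrying it out I would record the three elementary facts that make each step licit. Since $\theta_{K+\delta m}$ is a nontrivial nonnegative solution of \eqref{Eq:MainSingle}, the strong maximum principle together with the Hopf lemma gives $\inf_{\overline\O}\theta_{K+\delta m}>0$, so that the quotient $\p_{\delta,m}=(q_{\delta,m}-1)/\theta_{K+\delta m}$ is well defined; by elliptic regularity $q_{\delta,m},\theta_{K+\delta m}\in W^{2,p}(\O)$ for every finite $p$, hence $\p_{\delta,m}\in W^{2,p}(\O)$ and $\Delta\p_{\delta,m}\in L^p(\O)$; and since $\partial_\nu q_{\delta,m}=\partial_\nu\theta_{K+\delta m}=0$ on $\partial\O$, the quotient rule yields $\partial_\nu\p_{\delta,m}=0$ on $\partial\O$ as well.

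Starting from the expression of Lemma~\ref{Le:GDLargeFishing}, I would first invert the state equation satisfied by $\dot\theta_{K+\delta m}$ to solve for $h$, obtaining
\[ h=\frac{\mu\Delta\dot\theta_{K+\delta m}+\dot\theta_{K+\delta m}\left(-\delta m-2\theta_{K+\delta m}\right)}{\delta\,\theta_{K+\delta m}}. \]
Inserting this into the term $\delta\fint_\O(1-q_{\delta,m})h\dot\theta_{K+\delta m}$ and using $1-q_{\delta,m}=-\theta_{K+\delta m}\p_{\delta,m}$, the factor $\delta$ and one power of $\theta_{K+\delta m}$ cancel, leaving the integrand
\[ -\p_{\delta,m}\left(\mu\,\dot\theta_{K+\delta m}\Delta\dot\theta_{K+\delta m}+\dot\theta_{K+\delta m}^2\left(-\delta m-2\theta_{K+\delta m}\right)\right). \]

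Next I would apply the pointwise identity $\dot\theta_{K+\delta m}\Delta\dot\theta_{K+\delta m}=\tfrac12\Delta(\dot\theta_{K+\delta m}^2)-|\n\dot\theta_{K+\delta m}|^2$ and integrate the term carrying $\Delta(\dot\theta_{K+\delta m}^2)$ by parts twice, transferring the Laplacian onto $\p_{\delta,m}$. By the preliminary facts both functions satisfy homogeneous Neumann conditions, and $\partial_\nu(\dot\theta_{K+\delta m}^2)=2\dot\theta_{K+\delta m}\partial_\nu\dot\theta_{K+\delta m}=0$, so the boundary contributions vanish and $\fint_\O\p_{\delta,m}\Delta(\dot\theta_{K+\delta m}^2)=\fint_\O\dot\theta_{K+\delta m}^2\,\Delta\p_{\delta,m}$. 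Collecting the gradient term, the Laplacian term and the reaction term, and then adding back the remaining contribution $-\fint_\O q_{\delta,m}\dot\theta_{K+\delta m}^2$ from Lemma~\ref{Le:GDLargeFishing}, produces the announced reformulation.

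The computation is entirely routine once the preliminaries are secured; the one step genuinely worth checking is the double integration by parts, i.e.\ that $\p_{\delta,m}$ inherits the Neumann condition so that no boundary term is created. This is precisely the point at which one uses both that $q_{\delta,m}$ and $\theta_{K+\delta m}$ solve Neumann problems and that $\theta_{K+\delta m}$ is bounded away from zero; everything else is the same bookkeeping as in the proof of Lemma~\ref{Le:2GD}.
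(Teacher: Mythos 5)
Your proposal follows the paper's proof essentially verbatim: solve the linearised equation for $h$, substitute into the first-order term of Lemma \ref{Le:GDLargeFishing}, use the identity $\dot\theta_{K+\delta m}\,\Delta\dot\theta_{K+\delta m}=\tfrac12\Delta\bigl(\dot\theta_{K+\delta m}^2\bigr)-|\nabla\dot\theta_{K+\delta m}|^2$, and integrate by parts twice onto $\varphi_{\delta,m}=(q_{\delta,m}-1)/\theta_{K+\delta m}$, exactly as in the proof of Lemma \ref{Le:2GD}; your added justifications (positivity of $\theta_{K+\delta m}$, $W^{2,p}$ regularity, inheritance of the Neumann condition by the quotient) are correct and only make the argument more complete than the paper's. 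One minor remark: the reaction term that this computation actually produces carries the factor $\frac{1-q_{\delta,m}}{\theta_{K+\delta m}}$ in front of $(-\delta m-2\theta_{K+\delta m})$, in analogy with Lemma \ref{Le:2GD}; its absence in the displayed formula of the statement appears to be a typographical slip in the paper rather than a defect of your derivation.
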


\paragraph{Proofs of Theorems \ref{Th:Concave}-\ref{Th:Convex}}
We now get to the core of the proofs.
\begin{proof}[Proof of Theorem \ref{Th:Concave}]
Theorem \ref{Th:Concave} contains two statements, one dealing with the one-dimensional case, the other one dealing with the multi-dimensional case. Both rely on the same estimate of the expression of the second order gateaux derivative give by Lemma \ref{Le:2GD}.

From the proof Lemma \ref{Le:LFA1}  we lift estimate \eqref{Bloodborne}, which ascertains that \[\lim_{V_0\to 0}\sup_{\alpha \in \mathcal M_\leq (\kappa,V_0)}\Vert p_\alpha \Vert_{\mathscr C^1(\O)}=0.\]

Let us introduce, for any $\alpha \in \mathcal M_\leq(\kappa,V_0)$, the potential
\[ W_\alpha:=\left(\frac\mu2\Delta\left(\frac{1-p_\alpha}{\theta_\alpha}\right)+\frac{1-p_\alpha}{\theta_\alpha}(K-\alpha-2\theta_\alpha)-p_\alpha\right)
\]
as well as, for any $\alpha\in \mathcal M_{\leq}(\kappa,V_0)$, with $V_0$ small enough to ensure that for any $\alpha\in \mathcal M_\leq(\kappa,V_0)$ we have $1-p_\alpha\geq \frac12$, the operator 
\[ \mathcal L_\alpha:=-\mu\nabla\cdot\left(\frac{1-p_\alpha}{\theta_\alpha}\nabla\right)-W_\alpha.\]
Let $\xi(\alpha)$ be the first eigenvalue of $\mathcal L_\alpha$. $\xi(\alpha)$ is defined, by its Rayleigh quotient, as 
\begin{equation}\label{Eq:RyleighAlpha}
\xi(\alpha):=\inf_{u\in W^{1,2}(\O)\,, \fint_\O u^2=1}\left(\mu\fint_\O \frac{1-p_\alpha}{\theta_\alpha}|\n u|^2-\fint_\O W_\alpha u^2.\right).\end{equation}

From Lemma \ref{Le:2GD} there holds, for any $\alpha \in \mathcal M_\leq(\kappa,V_0)$ and any admissible perturbation $h$ at $\alpha$, 
\begin{equation}
\frac12 \ddot J(\alpha)[h,h]\leq -\xi(\alpha)\fint_\O \dot \theta_\alpha^2.
\end{equation}
The goal is now to get the asymptotic behaviour of $\xi(\alpha)$ as $V_0\to 0$ and, more precisely, to obtain that \begin{equation}\label{Eq:Monsieur}\lim_{V_0\to 0^+}\inf_{\alpha \in \mathcal M_\leq(\kappa,V_0)}\xi(\alpha)>0,\end{equation} which would suffice to prove the concavity of the functional. To do so, a first step is to understand the behaviour of the potential $W_\alpha$ as $V_0 \to 0$ .

As
\begin{align*}
\mu\Delta \left(\frac{1-p_\alpha}{\theta_\alpha}\right)&=-\mu \frac{\Delta p_\alpha}{\theta_\alpha}+2\mu \frac{\langle \n p_\alpha,\n \theta_\alpha\rangle}{\theta_\alpha^2}-(1-p_\alpha)\mu \frac{\Delta \theta_\alpha}{\theta_\alpha^2}+2(1-p_\alpha)\frac{|\n \theta_\alpha|^2}{\theta_\alpha^3}
\end{align*}
and as 
\[ -\mu \Delta p_\alpha=\alpha+p_\alpha(K-\alpha-2\theta_\alpha)\]
we deduce that if we define the limit potential 
\[ W_0:=\frac{\mu}2\Delta\left(\frac1{\overline\theta}\right)+\frac1{\overline \theta}(K-2\overline \theta)\] then it follows from \eqref{Eq:CvUni}-\eqref{Bloodborne} that 
\begin{equation}\label{Eq:CvPotentiels}
\forall p\in [1,+\infty)\,, \lim_{V_0\to 0}\sup_{\alpha \in \mathcal M_\leq(\kappa,V_0)} \Vert W_\alpha-\overline{W}\Vert_{L^p}=0.\end{equation}
Let $\overline \xi$ be the first eigenvalue of the operator
\[ \overline{\mathcal L}:=-\mu \nabla\cdot\left(\frac1{\overline \theta}\right)-\overline W.
\]
By a standard method that we detail in Appendix \ref{Ap:ConvEigen} this implies 

\begin{equation}\label{Eq:CvEi1}
\lim_{V_0\to 0}\sup_{\alpha \in \mathcal M_\leq(\kappa,V_0)}\left\vert \xi(\alpha)-\overline\xi\right\vert=0.\end{equation}

In particular, the proof of the Theorem is complete, provided we can prove that 
\[ \overline \xi>0.\] 

\paragraph{First analysis of $\overline \xi$}
Let us first observe that we can expand $\overline W$ as follows:
\begin{align*}
\overline W&=\frac{\mu}2\Delta\left(\frac1{\overline\theta}\right)+\frac1{\overline \theta}(K-2\overline \theta)
\\&=\frac{\mu}2\left(-\frac{\Delta \overline\theta}{\overline \theta^2} +2\frac{\left|\n \overline \theta\right|^2}{\theta^3}\right)+\frac1{\overline \theta}(K-2\overline \theta)
\\&=\frac1{2\overline \theta}(K-\overline \theta)&\text{ since $\overline \theta$ solves \eqref{Eq:LFAEq}}
\\&+\mu\frac{\left|\n \overline \theta\right|^2}{\overline \theta^3}+\frac1{\overline \theta}(K-2\overline \theta)
\\&=\frac32\cdot \frac{K-\overline \theta}{\overline \theta}+\mu\frac{\left|\n \overline \theta\right|^2}{\overline \theta^3}-1
\\&=\frac32\cdot \frac{K-\overline \theta}{\overline \theta}+\frac{3}4\mu\frac{\left|\n \overline \theta\right|^2}{\overline \theta^3}
\\&+\left(\frac{\mu}4\cdot \frac{\left|\n \overline\theta\right|^2}{\overline \theta^3}-1\right).
\end{align*}
Our last rewriting may seem mysterious at first, but it is justified by the following fact: if we define 
\begin{equation}\label{DeZ} \overline Z:=\frac32\cdot \frac{K-\overline \theta}{\overline \theta}+\frac{3}4\mu\frac{\left|\n \overline \theta\right|^2}{\overline \theta^3}\end{equation}
we can actually prove that the first eigenvalue $\overline A$ of the operator 
\begin{equation}\label{DefF} \overline{\mathcal F}:=-\mu \nabla \cdot\left(\frac1{\overline \theta}\nabla \right)-\overline Z\end{equation}is equal to 0. We will then use a monotonicity principle for eigenvalues. We start with the fact we just claimed:
\begin{lemma}
$\overline Z$ being defined by \eqref{DeZ} and 
$\overline{\mathcal F}$ being defined by \eqref{DefF}, the first eigenvalue $\overline A$ of $\overline{\mathcal F}$ is zero, and its  associated eigenfunction is $\p=\overline \theta^{\frac32}$. 
\end{lemma}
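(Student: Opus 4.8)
The plan is to prove the statement by a direct substitution: I will verify that the explicit candidate $\p=\overline\theta^{\frac32}$ solves $\overline{\mathcal F}\p=0$ together with the homogeneous Neumann boundary condition, and then invoke the fact that a strictly positive eigenfunction of a self-adjoint operator of this type is necessarily associated with the principal (smallest) eigenvalue, which forces $\overline A=0$. The substitution itself is a ground-state-type change of unknown, so the content of the lemma is really that the exponent $\frac32$ is tuned precisely so that the curvature and gradient terms recombine into $\overline Z$ once the state equation \eqref{Eq:LFAEq} is used.

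For the substitution step, first I would compute $\n\p=\frac32\overline\theta^{\frac12}\n\overline\theta$, so that $\frac1{\overline\theta}\n\p=\frac32\overline\theta^{-\frac12}\n\overline\theta$. Taking the divergence gives
\[ \n\cdot\left(\frac1{\overline\theta}\n\p\right)=\frac32\overline\theta^{-\frac12}\Delta\overline\theta-\frac34\overline\theta^{-\frac32}\left|\n\overline\theta\right|^2. \]
The key simplification is then to eliminate $\Delta\overline\theta$ using \eqref{Eq:LFAEq}, namely $\mu\Delta\overline\theta=-\overline\theta(K-\overline\theta)$. Substituting this identity yields
\[ -\mu\n\cdot\left(\frac1{\overline\theta}\n\p\right)=\frac32\overline\theta^{\frac12}(K-\overline\theta)+\frac34\mu\,\overline\theta^{-\frac32}\left|\n\overline\theta\right|^2, \]
which is exactly $\overline Z\,\p$ with $\overline Z$ as defined in \eqref{DeZ}. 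Hence $\overline{\mathcal F}\p=0$ pointwise. The Neumann condition is immediate, since $\frac{\partial\p}{\partial\nu}=\frac32\overline\theta^{\frac12}\frac{\partial\overline\theta}{\partial\nu}=0$ on $\partial\O$ because $\overline\theta$ itself satisfies homogeneous Neumann conditions.

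For the concluding step, I would first recall that $\overline\theta>0$ on $\overline\O$ (by the strong maximum principle applied to \eqref{Eq:LFAEq}), so the coefficient $\frac1{\overline\theta}$ is bounded and bounded away from zero, making $\overline{\mathcal F}$ a genuine self-adjoint elliptic operator whose principal eigenvalue admits a Rayleigh-quotient characterisation analogous to \eqref{Eq:RyleighAlpha}. Since $\p=\overline\theta^{\frac32}$ is strictly positive on $\overline\O$, it cannot be $L^2$-orthogonal to the (sign-definite) principal eigenfunction; therefore $\p$ must itself be the principal eigenfunction and its eigenvalue $0$ the principal one, giving $\overline A=0$.

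As for difficulty, this is essentially a verification and carries no serious analytic obstacle; the only genuinely non-mechanical points are recognising that $\frac32$ is the unique exponent for which the $\Delta\overline\theta$ and $\left|\n\overline\theta\right|^2$ contributions collapse into $\overline Z$ after \eqref{Eq:LFAEq}, and invoking the positivity argument cleanly — that is, identifying $0$ as the \emph{bottom} of the spectrum rather than merely exhibiting it as \emph{some} eigenvalue.
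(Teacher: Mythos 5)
Your proposal is correct and follows essentially the same route as the paper: direct substitution of $\p=\overline\theta^{\frac32}$, elimination of $\Delta\overline\theta$ via \eqref{Eq:LFAEq} to recover $\overline Z\p$, and the standard positivity argument identifying the constant-sign eigenfunction with the principal one. Your intermediate computation is in fact cleaner than the paper's displayed one, which contains sign and exponent typos in the $\left|\n\overline\theta\right|^2$ term before arriving at the same (correct) conclusion.
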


\begin{proof}
Let $\p:=\overline \theta^{\frac32}$. We have 
\[ \nabla \p=\frac32\cdot \sqrt{\overline \theta}\n \overline \theta\] and so
\[
\frac{\nabla \p}{\overline \theta}=\frac32\cdot\frac{\n \overline \theta}{\sqrt{\overline \theta}}.\]
Thus
\begin{align*}
 -\mu\nabla \cdot\left(\frac{\nabla \p}{\overline \theta}\right)&=\frac32\cdot\frac{-\mu \Delta \overline\theta}{\sqrt{\overline \theta}}-\mu\frac{3}4\frac{\left|\n \overline \theta\right|^2}{\sqrt{\overline \theta}}
 \\&=\frac32\cdot\frac{ \overline\theta(K-\overline \theta)}{\sqrt{\overline \theta}}-\mu\frac{3}4\frac{\left|\n \overline \theta\right|^2}{\sqrt{\overline \theta}}
\\&=\frac32\cdot{\sqrt{ \overline\theta}(K-\overline \theta)}-\mu\frac{3}4\frac{\left|\n \overline \theta\right|^2}{\sqrt{\overline \theta}}
\\&=\varphi \left(\frac{3}2\cdot \frac{K-\overline \theta}{\overline \theta}-\mu\frac34\cdot\frac{\left|\n \overline \theta\right|^2}{\overline \theta^3}\right).
\end{align*}
Thus $\p$ is an eigenfunction of $\overline F$ associated with the eigenvalue 0. As $\p=\overline \theta^{\frac32}>0$ and as the first eigenvalue of $\overline F$ is the only eigenvalue whose associated eigenfunctions have constant signs, we deduce that $\p$ is a principal eigenfunction and that the first eigenvalue of $\overline F$ is 0.
\end{proof}
Now, if we can ensure that $\overline W\leq \overline Z\,, \overline W\neq \overline Z$ then by virtue of the monotonicity of the first eigenvalue \cite[Lemma 2.1]{DockeryHutsonMischaikowPernarowskiEvolution} we have 
\[ \overline \xi>0\] so that \eqref{Eq:Monsieur}. Thus the proof would be complete.

We now notice that 
\[\overline W-\overline Z=\left(\frac{\mu}4\cdot \frac{\left|\n \overline\theta\right|^2}{\overline \theta^3}-1\right).\]Proving that $\overline\xi>0$ boils down to investigating whether or not $\left(\frac{\mu}4\cdot \frac{\left|\n \overline\theta\right|^2}{\overline \theta^3}-1\right)<0$. We do that in the one-dimensional case and, in the higher dimensional case, for resources distributions that are close to a constant.

\begin{enumerate}
\item \underline{In the one-dimensional case:} here we use an estimate of Bai, He and Li \cite[Estimate (2.2)]{BaiHeLi}, namely, that, in the one-dimensional case, provided $K$ is bounded (which is the case here by assumption) there holds 
\[\frac{\mu}2\left(\overline \theta'(x)\right)^2\leq \frac{\overline\theta(x)^3}3.\]

\begin{remark} It should be noted that in \cite[Estimate (2.2)]{BaiHeLi} this estimate is proved when $\overline\theta$ is monotonic, and that they then integrate this identity on such an interval to obtain an integral estimate. Then, they present, in \cite[Steps 2 and 3, proof of Theorem 2.2]{BaiHeLi}, a way to glue these integral estimates. The very same strategy works to prove that \cite[Estimate (2.2)]{BaiHeLi} is valid on the entire interval.\end{remark}
In particular, 
\[ 1-\frac{\mu}4\cdot \frac{\left(\overline \theta'(x)\right)^2}{\overline \theta^3}\geq 1-\frac16=\frac56
\] so that the proof is concluded.
\item\underline{In the higher-dimensional case:} in that second case, since we work with variable $K$, let us add the subscript $K$ to the notation $\overline \theta$ and denote by $\overline\theta_K$ the solution of \eqref{Eq:LFAEq}.
In this case the only thing that should be noted is that, when $\overline K$ is constant, $\overline \theta_{\overline K}\equiv \frac1{\mathrm{Vol}(\O)}K_0=\overline K$. In that case, 
\[ \overline W-\overline Z=-1<0.\] 
However, a simple adaptation of the arguments of \eqref{Eq:CvUni} proves that for any $\delta'>0$ there exists a constant $\e_3>0$ such that, for any $K\in \mathcal K(\O)$, if $\Vert K-\overline K\Vert_{L^1(\O)}\leq \e_3$ then 
\[ \Vert \overline \theta_K-\overline\theta_{\overline K}\Vert_{\mathscr C^1(\O)}\leq \delta'.\]
If $\delta'$ is small enough, this implies that  for any $K\in \mathcal K(\O)$ such that $\Vert K-\overline K\Vert_{L^1(\O)}\leq \e_3$ we have 
\[ -1+\frac14\frac{|\n \overline\theta_K|^2}{\overline \theta_K^3}< -\frac12.\] The conclusion follow in exactly the same way.
\end{enumerate}

\end{proof}

\begin{proof}[Proof of Theorem \ref{Th:Convex}]
For the proof of Theorem \ref{Th:Convex} we follow the same type of strategy as the one used for the proof of Theorem \ref{Th:Concave}. We start with the expression of the second-order Gateaux derivative given in Lemma \ref{Le:2GDLargeFishing}: for any $m\in \mathcal N_=(\O)$ and any admissible perturbation $h$ at $m$ we have 
\begin{multline*}
\frac12\ddot{\overline J}_{\delta,\mu}(m)[h,h]=\mu\fint_\O {\frac{q_{\delta,m}-1}{\theta_{K+\delta m}}}\left|\n \dot \theta_{K+\delta m}\right|^2\\+\fint_\O \dot\theta_{K+\delta m}^2\left(\frac\mu 2\Delta\left( {\frac{1-q_{\delta,m}}{\theta_{K+\delta m}}}\right)-(-\delta m-2\theta_{K+\delta m})\right)-\fint_\O q_{\delta,m}\dot \theta_{K+\delta m}^2.
 \end{multline*}
 Recall that from Proposition \ref{Pr:LargeFA2} there exists $\delta_2>0$ small enough such that, for any $\delta\leq \delta_2$, for any $m\in \mathcal N_=(\O)$ there holds 
 \[q_{\delta,m}-1\geq \frac{\sup_{m\in \mathcal N_=(\O)}\Vert \theta_{K+\delta m}\Vert_{L^\infty(\O)}}2.\]
  For $\delta\leq \delta_2$ we can thus bound the second-order derivative as 
 \[ \frac12\ddot{\overline J}_{\delta,\mu}(m)[h,h]\geq \frac{\mu}2\fint_\O \left|\n \dot \theta_{K+\delta m}\right|^2+\fint_\O Y_{\delta,m}\dot\theta_{K+\delta m}^2,\]
  where the potential $Y_{\delta,m}$ is defined as
 \[ Y_{\delta,m}:=\left(\frac\mu2\Delta\left( {\frac{q_{\delta,m}-\delta}{\theta_{K+\delta m}}}\right)-(-\delta m-2\theta_{K+\delta m})\right)-q_{\delta,m}.\]
However, expanding $Y_{\delta,m}$ as was done in the proof of Theorem \ref{Th:Concave} for $\Delta\left(\frac{1-p_\alpha}{\theta_{\alpha}}\right)$ we obtain the existence of a constant $\beta=\beta(\delta)$ such that 
\[ \forall m\in \mathcal N_=(\O)\,, \Vert Y_m\Vert_{L^\infty(\O)}\leq \beta.\] Defining $\alpha:=\frac{\mu}2$ we thus have, for the second-order Gateaux-derivative, the lower estimate
 \[ \frac12\ddot{\overline J}_{\delta,\mu}(m)[h,h]\geq \alpha\fint_\O \left|\n \dot \theta_{K+\delta m}\right|^2-\beta\fint_\O \dot\theta_{K+\delta m}^2.\]However, we are now exactly in the proper situation to mimic the proof of \cite[Theorem 1]{Mazari2021}: argue by contradiction and assume that there exists a non-bang-bang solution $m^*$ of \eqref{Pv:1EqLargeFishing}. In particular the set $\omega^*:=\{0<m^*<1\}$ has a positive measure. Let $M>0$ be arbitrarily large. Following \cite[Proof of Theorem 1, Eq. (2.20) and below]{Mazari2021} there exists an admissible perturbation $h_M$ at $m^*$ supported in $\omega^*$ such that 
 \[ \fint_\O \left|\n \dot\theta_{K+\delta m^*}\right|^2>M\fint_\O \dot\theta_{K+\delta m^*}^2.\] Taking $M:=\frac\beta\alpha+1$ we obtain the required contradiction: for the perturbation $h_{\frac\beta\alpha}$ there holds 
 \[ \frac12\ddot{\overline J}_{\mu,\delta}(m^*)\left[h_{\frac\beta\alpha},h_{\frac\beta\alpha}\right]\geq \alpha\fint_\O \dot\theta_{K+\delta m^*}^2>0,\]
 in contradiction with the optimality of $m^*$.
\end{proof}

\section{Proofs of Theorem \ref{Th:AsymptoticSingle} }\label{Se:2}
Before we prove Theorem \ref{Th:AsymptoticSingle}  we prove Proposition \ref{Pr:Large0}.
\begin{proof}[Proof of Proposition \ref{Pr:Large0}]
We recall that 
\[J^0:\alpha\mapsto\left(\fint_\O \alpha\right)\left(K_0-\fint_\O \alpha\right).\] Clearly, $J^0$ is twice Gateaux-differentiable at every $\alpha$ and, for any $\alpha \in \mathcal M_\leq(\kappa,V_0)$ and any admissible perturbation $h$ at $\alpha$ there holds 
\[ \dot J^0(\alpha)[h,h]=\left(\fint_\O h\right)\left(K_0-2\fint_\O \alpha\right).
\]
In particular, if $\fint_\Omega \alpha\leq V_0<\frac{K_0}2$ the functional $J^0$ is increasing on $\mathcal M_\leq(\kappa,V_0)$, so that any solution $\alpha^*$ of \eqref{Pv:1IneqLarge} satisfies $\fint_\O \alpha^*=V_0$. Thus, $\alpha^*$  is also a solution of \eqref{Pv:1EqLarge}.

If, on the contrary, we assume that $V_0>\frac{K_0}2$, consider a solution $\alpha^*$ of \eqref{Pv:1IneqLarge}. Let us prove that we necessarily have $\fint_\O \alpha^*<V_0$. If, by contradiction, we had
\[ \fint_\O \alpha^*=V_0\] then, for any non-positive, non-zero perturbation $h$, we have 
\[ \dot J^0(\alpha^*)[h]=\left(\fint_\O h\right)\left(K_0-2\fint_\O \alpha^*\right)>0,\] in contradiction with the optimality of $\alpha^*$. In particular, $\fint_\O \alpha^*<V_0$ and so the two problem \eqref{Pv:1EqLarge} and \eqref{Pv:1IneqLarge} do not coincide.

\end{proof}

\subsection{Proof of Theorem \ref{Th:AsymptoticSingle}}
\begin{proof}[Proof of Theorem \ref{Th:AsymptoticSingle}]
\textbf{Reformulation of $J^1$: }
To prove Theorem \ref{Th:AsymptoticSingle}, we need a tractable rewriting of the function $J^1$. Let us recall that we defined the constant \[M_\alpha:=K_0-\fint_\O \alpha.\] As we are working with an equality constraint we may drop the subscript $\alpha$ and simply define 
\[ M_0:=K_0-V_0.\] The functional $J^1$ is defined as  
\[ J^1(\alpha)=\fint_\O \alpha v_\alpha\text{ where }\begin{cases}-\Delta v_\alpha-M_0(K-\alpha-M_0)=0&\text{ in }\O\,, 
\\ \frac{\partial v_\alpha}{\partial\nu}=0&\text{ on }\partial \O\,, 
\\ \fint_\O v_\alpha=\frac1{M_0^2}\fint_\O |\n v_\alpha|^2.
\end{cases}\]
Let us introduce, for any $\alpha \in \mathcal M_\leq(\kappa,V_0)$, the solution $\hat v_\alpha$ of 
\begin{equation}\label{Eq:HatV}
\begin{cases}
-\Delta \hat v_\alpha-M_0\left(K-\alpha-M_0\right)=0&\text{ in }\O,
\\ \frac{\partial \hat v_\alpha}{\partial \nu}=0&\text{ on }\partial \O\,, 
\\ \fint_\O \hat v_\alpha=0.
\end{cases}
\end{equation}
Clearly we have 
\[ v_\alpha=\hat v_\alpha+\frac1{M_0^2}\fint_\O |\n\hat v_\alpha|^2,\]
so that 
\begin{align*}
J^1(\alpha)&=\fint_\O \alpha v_\alpha
\\&=\fint_\O \alpha \hat v_\alpha
\\&+\frac1{M_0^2}\fint_\O |\n\hat v_\alpha|^2\fint_\O\alpha
\\&=\fint_\O (\alpha+M_0-K)\hat v_\alpha
\\&-\fint_\O (M_0-K)\hat v_\alpha
\\&+\frac{V_0}{M_0^2}\fint_\O |\n\hat v_\alpha|^2
\\&=-\frac1{M_0}\fint_\O |\n \hat v_\alpha|^2
\\&+\fint_\O K\hat v_\alpha\text{ as $\fint_\O \hat v_\alpha=0$}
\\&+\frac{V_0}{M_0^2}\fint_\O |\n\hat v_\alpha|^2
\\&=\left(\frac{2V_0-K_0}{M_0^2}\right)\fint_\O |\n \hat v_\alpha|^2+\fint_\O K\hat v_\alpha.
\end{align*}

\textbf{Analysis of the second order derivative of $J^1$: }
But now observe that if we define 
\[j_1:\alpha \mapsto \left(\frac{2V_0-K_0}{M_0^2}\right)\fint_\O |\n \hat v_\alpha|^2\,, j_2:\alpha\mapsto \fint_\O K\hat v_\alpha\]
then $j_2$ is linear in $\alpha$ as the map $\alpha \mapsto \hat v_\alpha$ is linear. As 
\[ J^1=j_1+j_2\] the second order derivative of $J^1$ is determined by the second-order derivative of $j_1$. However, it is straightforward to see, mimicking the computations of \cite[Proof of Theorem 1, Step 1]{Mazari2020}, that, for any $\alpha\in \mathcal M_=(\kappa,V_0)$ and any admissible perturbation $h$ at $\alpha$, we have 
\[ \ddot j_1(\alpha)[h,h]=\left(\frac{2V_0-K_0}{M_0^2}\right)\fint_\O \left|\n \dot{\hat v}_\alpha\right|^2\text{ where }\begin{cases}
-\Delta \dot{\hat v}_\alpha+M_0h=0&\text{ in }\O\,, 
\\ \frac{\partial \dot{\hat v}_\alpha}{\partial \nu}=0&\text{ on }\partial \O\,, 
\\ \fint_\O \dot{\hat v}_\alpha=0.
\end{cases}
\]
In particular, if $2V_0>K_0$ the functional is (strictly) convex. Thus, any solution of \eqref{Pv:1EqLarge2} is an extreme point of $\mathcal M_=(\kappa,V_0)$ that is, any solution is a bang-bang function. Conversely, if $2V_0<K_0$, the functional is (strictly) concave. This ends the proof of the two first-points of the theorem.

Now let us move to the characterisation of optimisers in the convex regime (point 3 of the theorem). Assume $\O=(0;1)$, assume that $2V_0>K_0$ and that $K$ is a non-increasing, non constant function. To give an explicit description of the maximiser $\alpha$ we need to use the notion of non-increasing rearrangement. Let us recall the following definition:
\begin{definition}
For any non-negative function $f\in L^1(0;1)$ there exists a unique non-increasing, non-negative function $f^\#\in L^\infty(0;1)$ such that 
\[ \forall t\geq 0\,, \mathrm{Vol}\left(\{f\geq t\}\right)=\mathrm{Vol}(\{f^\#\geq t\}).\] Similarly, there exists a unique non-decreasing, non-negative function $f_\#\in L^\infty(0;1)$ such that \[ \forall t\geq 0\,, \mathrm{Vol}\left(\{f\geq t\}\right)=\mathrm{Vol}(\{f_\#\geq t\}).\]
\end{definition}
Two inequalities are of paramount importance when dealing with rearrangements:
\begin{enumerate}
\item 
The celebrated P\'{o}lya-Szeg\"{o} inequality: it states that, if $f\in W^{1,2}(0;1)$, then $f^\#\in W^{1,2}(\O)$ and, furthermore, that we have 
\begin{equation}\label{Eq:PS}\fint_0^1\left| \left(f^\#\right)'\right|^2\leq \fint_0^1\left| f'^2\right|.\end{equation}
\item The Hardy-Littlewood inequality: it states that, if $f\,, g\in L^1(\O)$ are bounded functions then 
\begin{equation}\label{Eq:HL}\fint_0^1 f_\# g^\#\leq\fint_0^1 fg\leq \fint_0^1 f^\#g^\#.
\end{equation}
\end{enumerate}
While rearrangements are central in the calculus of variations (we refer for instance to \cite{Baernstein,Bandle,BHR,Kawohl,Kesavan,Rakotoson}) and has wide ranging applications, we focus here on Talenti inequalities. Originating in the seminal \cite{Talenti}, in the case of the Schwarz rearrangement for Dirichlet boundary conditions, these inequalities aim at comparing the solution $u$ of a Poisson equation of the form $-\Delta u=f$ with Dirichlet boundary conditions with the solution $v$ of a symmetrised equation. Among the many results related to possible extensions and to the qualitative analysis of these inequalities to other operators \cite{Alvino1990,AlvinoNitschTrombetti,Bandle,Mazari2022,RakotosonMossino,Sannipoli2022} let us focus on the results of \cite{Langford}. To use them we need to recall the rearrangement order on $L^1(0;1)$: for any two non-negative functions $f,g\in L^1(0;1)$ we say that $f$ dominates $g$ in the sense of rearrangements and we write \[
f\prec g
\] if, and only if, 
\[
\forall r \in [0;R]\,, \fint_0^r f^\#\leq \fint_0^r g^\#.
\]
Our goal is to show that minor adaptation of \cite[Chapter 5]{Langford} yields the following result: defining, for any $f\in L^1(\O)$ such that $\fint_0^1f=0$, the solution $u_f$ of 
\begin{equation}
\begin{cases}
-(u_f)''=f&\text{ in }(0;1)\,, 
\\ u_f'(0)=u_f'(1)=0\,, 
\\ \fint_0^1 u_f=0.\end{cases}\end{equation}
we claim that, for any $g$ such that $f\prec g$, there holds
\begin{equation}\label{Eq:TalentiLangford} u_f\prec u_{g^\#}.\end{equation}
Before we prove \eqref{Eq:TalentiLangford} let us investigate why this yields the required result.
\begin{lemma}\label{Le:TalentiImplies} If estimate \eqref{Eq:TalentiLangford} holds for any non-negative $f\in L^1(\O)$, if $K=K^\#$ is not constant and if $V_0>\frac{K_0}2$ then the unique solution of \eqref{Pv:1EqLarge2} is given by
\[\alpha^*=\kappa \mathds 1_{[1-\ell;1]}
\] where $\kappa\ell=1$.
\end{lemma}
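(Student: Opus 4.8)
The plan is to combine the strict convexity already obtained in the convex regime of Theorem \ref{Th:AsymptoticSingle} with the assumed Talenti inequality \eqref{Eq:TalentiLangford} and the two classical inequalities \eqref{Eq:PS}--\eqref{Eq:HL}. Since $V_0>\frac{K_0}2$, the functional $J^1$ is strictly convex on $\mathcal M_=(\kappa,V_0)$, so every maximiser is an extreme point, i.e. a bang-bang function $\alpha=\kappa\mathds 1_E$ with $\mathrm{Vol}(E)=\ell$ and $\kappa\ell=V_0$; it therefore only remains to identify the optimal set $E$, and I claim it is $[1-\ell;1]$. The guiding observation is that, since $K=K^\#$ is non-increasing, $\alpha^*=\kappa\mathds 1_{[1-\ell;1]}$ is exactly the \emph{non-decreasing rearrangement} of any competitor $\kappa\mathds 1_E$, and consequently the source $K-\alpha^*-M_0$ of the equation \eqref{Eq:HatV} defining $\hat v_{\alpha^*}$ is non-increasing (subtracting $\kappa$ on the rightmost interval, where $K$ is already smallest, preserves monotonicity). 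Throughout I work with the reformulation $J^1(\alpha)=\left(\frac{2V_0-K_0}{M_0^2}\right)\fint_\O|\n\hat v_\alpha|^2+\fint_\O K\hat v_\alpha$, whose leading coefficient is positive.

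The first step is the source comparison $K-\alpha\prec K-\alpha^*$. Using the bathtub characterisation $\fint_0^r f^\#=\sup_{\mathrm{Vol}(A)=r}\fint_A f$, this amounts to $\sup_{\mathrm{Vol}(A)=r}\fint_A(K-\alpha)\le\fint_0^r(K-\alpha^*)$ for every $r$. For $r\le 1-\ell$ it follows from $\fint_A(K-\alpha)\le\fint_A K\le\fint_0^r K=\fint_0^r(K-\alpha^*)$, using that $K$ is non-increasing; for $r>1-\ell$ it follows from the elementary measure bound $\mathrm{Vol}(A\cap E)\ge r+\ell-1$, which gives $\fint_A(K-\alpha)\le\fint_0^r K-\kappa(r-1+\ell)=\fint_0^r(K-\alpha^*)$. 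Subtracting the constant $M_0$ and multiplying by $M_0>0$ preserve $\prec$, so the two right-hand sides of \eqref{Eq:HatV} are ordered; since $K-\alpha^*-M_0$ is already non-increasing (equal to its own rearrangement), \eqref{Eq:TalentiLangford} applies directly with $g=g^\#$ and yields the solution domination $\hat v_\alpha\prec\hat v_{\alpha^*}$.

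From this domination I bound the two terms of $J^1$. A useful structural fact is that every function in sight has zero average, so all boundary terms in the integrations by parts below vanish; moreover $\hat v_{\alpha^*}$ is itself non-increasing, since $\hat v_{\alpha^*}'=-M_0\int_0^\cdot(K-\alpha^*-M_0)\le 0$ (a non-increasing zero-average source has non-negative partial integrals). For the linear term, Hardy--Littlewood \eqref{Eq:HL} for the non-increasing weight $K=K^\#$ gives $\fint_\O K\hat v_\alpha\le\fint_\O K\hat v_\alpha^\#$, and integrating by parts against $-K'\ge 0$ turns $\hat v_\alpha\prec\hat v_{\alpha^*}$ into $\fint_\O K\hat v_\alpha^\#\le\fint_\O K\hat v_{\alpha^*}$. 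For the Dirichlet term I use $\fint_\O|\n\hat v_\alpha|^2=\fint_\O s_\alpha\hat v_\alpha\le\fint_\O s_\alpha^\#\hat v_\alpha^\#$ (Hardy--Littlewood, with $s_\alpha:=M_0(K-\alpha-M_0)$) and then two integrations by parts: the first replaces $\hat v_\alpha^\#$ by $\hat v_{\alpha^*}$ using $\hat v_\alpha\prec\hat v_{\alpha^*}$ and $(s_\alpha^\#)'\le 0$, the second replaces $s_\alpha^\#$ by $s_{\alpha^*}$ using $s_\alpha\prec s_{\alpha^*}$ and $\hat v_{\alpha^*}'\le 0$, yielding $\fint_\O|\n\hat v_\alpha|^2\le\fint_\O s_{\alpha^*}\hat v_{\alpha^*}=\fint_\O|\n\hat v_{\alpha^*}|^2$. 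Adding the two estimates (the leading coefficient being positive) gives $J^1(\alpha)\le J^1(\alpha^*)$. Since $K$ is non-constant, the Hardy--Littlewood step is an equality only when $\hat v_\alpha$ is co-monotone with $K$, which forces $E=[1-\ell;1]$ up to a null set, whence uniqueness.

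The main obstacle I anticipate is the rigorous invocation of \eqref{Eq:TalentiLangford} in the present sign-changing, zero-average Neumann setting: the order $\prec$ and the rearrangement $g^\#$ are most naturally stated for non-negative sources, whereas $s_\alpha$ changes sign, so one must carefully justify the passage (this is precisely the reduction encapsulated in the hypothesis that \eqref{Eq:TalentiLangford} holds, following \cite{Langford}). The remaining delicacy is purely bookkeeping, namely transporting the single order $\prec$ simultaneously to a quadratic (Dirichlet) and a linear functional and keeping track of the opposite directions in which Talenti and P\'olya--Szeg\"o push; the zero-average property, which annihilates every boundary term, is what makes the two integrations by parts close. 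The measure-theoretic source comparison, by contrast, is elementary once the bathtub identity is used.
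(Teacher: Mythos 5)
Your proof is correct and shares the paper's skeleton --- the same reformulation $J^1(\alpha)=\bigl(\tfrac{2V_0-K_0}{M_0^2}\bigr)\fint_\O|\n\hat v_\alpha|^2+\fint_\O K\hat v_\alpha$ with positive leading coefficient, the Talenti domination $\hat v_\alpha\prec\hat v_{\alpha^*}$, and Hardy--Littlewood to convert domination into the pairing inequality for the linear term --- but it diverges from the paper in two genuine ways. First, the paper controls the Dirichlet term by writing $\fint_0^1|\hat v_\alpha'|^2=-2\mathcal E_\alpha(\hat v_\alpha)$ for the natural energy $\mathcal E_\alpha$ and then chaining P\'olya--Szeg\"o, equimeasurability and Hardy--Littlewood to get $\mathcal E_{\alpha_\#}(\hat v_{\alpha_\#})\leq\mathcal E_{\alpha_\#}(\hat v_\alpha^\#)\leq\mathcal E_\alpha(\hat v_\alpha)$; you instead avoid P\'olya--Szeg\"o entirely, using the weak formulation $\fint|\n\hat v_\alpha|^2=\fint s_\alpha\hat v_\alpha$ and two summation-by-parts steps driven by the same domination $\prec$ and the zero averages. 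Second, the paper compares an arbitrary $\alpha$ with its monotone rearrangement $\alpha_\#$, obtaining the source domination from the general inequality $(K-\alpha)^\#\prec K^\#-\alpha_\#$ of \cite{alvino1991}, and thereby proves the stronger statement $J^1(\alpha)\leq J^1(\alpha_\#)$ for every admissible $\alpha$; you first invoke the strict convexity of $J^1$ (point 2 of Theorem \ref{Th:AsymptoticSingle}) to reduce to bang-bang competitors and then prove $K-\kappa\mathds 1_E\prec K-\alpha^*$ by an elementary bathtub and measure-intersection bound. Your route is more self-contained (no \cite{alvino1991}, no P\'olya--Szeg\"o) at the price of proving the comparison only on the extreme points, which is enough here. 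Your identification of the sign-changing, zero-average sources as the delicate point in applying \eqref{Eq:TalentiLangford} is accurate; the paper's own proof glosses over the same issue.

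The one place where your argument is thinner than the paper's is uniqueness. You deduce $E=[1-\ell;1]$ from the equality case of Hardy--Littlewood alone, but equality in $\fint K\hat v_\alpha=\fint K^\#\hat v_\alpha^\#$ does not force $\hat v_\alpha=\hat v_\alpha^\#$ when the non-constant $K$ has flat parts. The paper instead extracts rigidity from the equality case of P\'olya--Szeg\"o (via \cite{Ferone2003}), which forces $\hat v_\alpha$ to be monotone before the Hardy--Littlewood equality case is invoked. Since you dispensed with P\'olya--Szeg\"o, you would need to extract the corresponding rigidity from the equality cases of your summation-by-parts chain (equality there forces $\int_0^r\hat v_\alpha^\#=\int_0^r\hat v_{\alpha^*}$ wherever $s_\alpha^\#$ strictly decreases), or simply reinstate the P\'olya--Szeg\"o step for the uniqueness claim. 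This is a repairable gap, not a flaw in the main inequality.
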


\begin{proof}[Proof of Lemma \ref{Le:TalentiImplies}]
The proof of this Lemma rests upon a rewriting of $J^1$ in terms of natural energy functional associated with $\hat v_\alpha$.
\paragraph{Rewriting of $J^1$ in terms of an energy functional}
We start from the fact that for any $\alpha$ we have
\[ J^1(\alpha)=\frac{2V_0-K_0}{M_0^2}\fint_0^1 |\hat v_\alpha'|^2+\fint_0^1 K\hat v_\alpha
\]where $\hat v_\alpha$ satisfies \eqref{Eq:HatV}
To alleviate notations, define 
\[C_0:=2\frac{2V_0-K_0}{M_0^2}>0,\] so that 
\[ \forall \alpha\,, J^1(\alpha)=C_0\fint_0^1 |\hat v_\alpha'|^2+\fint_0^1 K\hat v_\alpha.\]
 
 However, \eqref{Eq:HatV} admits a natural variational formulation: introduce the space
\[\mathcal X:=\left\{ u\in W^{1,2}(0;1)\,, \fint_0^1 u=0\right\}\] and define the energy functional 
\[ \mathcal E_\alpha:\mathcal X\ni u\mapsto \frac12\fint_0^1|u'|^2-M_0\fint_0^1(K-\alpha-M_0)u.\]Then $\hat v_\alpha$ is the unique solution of 
\[ \min_{u\in \mathcal X}\mathcal E_\alpha(u).\]
Now observe that from the weak formulation of \eqref{Eq:HatV} we have 
\begin{equation}
\fint_0^1 |\hat v_\alpha'|^2=M_0\fint_0^1(K-\alpha-M_0)\hat v_\alpha=-\mathcal E_\alpha(\hat v_\alpha)+\frac12\fint_0^1|\hat v_\alpha'|^2
\end{equation}
so that in the end
\[ \fint_0^1 |\hat v_\alpha'|^2=-2\mathcal E_\alpha(\hat v_\alpha).\]
This allows us to rewrite  $ J^1$ as
\begin{equation}\label{Eq:Reexpression}J^1(\alpha)=-2C_0\mathcal E_\alpha(\hat v_\alpha)+\fint_0^1 K\hat v_\alpha=-2C_0\mathcal E_\alpha(\hat v_\alpha)+\fint_0^1 K \hat v_\alpha.\end{equation}

We will prove that rearranging the coefficients of the equation increases each term appearing in the right-hand-side of \eqref{Eq:Reexpression}.

\paragraph{Rearranging $\alpha$ increases $-\mathcal E_\alpha(\hat v_\alpha)$}
Let us start with the energy functional. From  the P\"{o}lya-Szeg\"{o} inequality \eqref{Eq:PS} we know that 
\[ \fint_0^1 |\hat v_\alpha'|^2\geq \fint_0^1 \left|\left(\hat v_\alpha^\#\right)'\right|^2.\]
Furthermore, from equimeasurability of the rearrangement we have 
\[ \fint_0^1 M_0 \hat v_\alpha=\fint_0^1 M_0(\hat v_\alpha^\#).\]
Finally, from the Hardy-Littlewood inequality \eqref{Eq:HL} there holds
\[ \fint_0^1 K\hat v_\alpha\leq \fint_0^1 K^\#\hat v_\alpha^\#=\fint_0^1 K\hat v_\alpha^\#\text{ and }\fint_0^1 \alpha_\# \hat v_\alpha^\#\leq \fint_0^1 \alpha \hat v_\alpha.
\]
This gives
\[ \mathcal E_{\alpha_\#}\left(\hat v_{\alpha_\#}\right)\leq \mathcal E_{\alpha_\#}\left(\hat v_\alpha^\#\right)\leq \mathcal E_\alpha(\hat v_\alpha).\]

\paragraph{Rearranging increases $\fint _0^1 K \hat v_\alpha$}
Let us now observe the effect of rearrangement on the equation satisfied by $\hat v_\alpha$. Assume that the Talenti inequality \eqref{Eq:TalentiLangford} holds. Then we know (taking $f=g$ in \eqref{Eq:TalentiLangford}) that 
\[ \hat v_\alpha \prec z\] where $z$ solves
\[
\begin{cases}
-z''=M_0\left(K-\alpha-m_0\right)^\#&\text{ in }(0;1)\,, 
\\ z'(0)=z'(1)=0\,, 
\\ \fint_0^1 z=0.\end{cases}\]
In general it is not true that $(K-\alpha)^\#=K^\#-\alpha_\#$. However, we always have the inequality 
\[ (K-\alpha)^\#\prec  K^\#-\alpha_\#=K-\alpha_\#.\]See, for instance, \cite[Proposition 3]{alvino1991}. Thus, applying \eqref{Eq:TalentiLangford} with $f=M_0(K-\alpha-M_0)$ and $g=M_0(K^\#-\alpha_\#-M_0)$ yields 
\[z\prec \hat v_{\alpha_\#},\] whence 
\[ \hat v_\alpha \prec\hat v_{\alpha_\#}.\] 
From the Hardy-Littlewood inequality \eqref{Eq:HL} and the definition of the order relation $\prec$ this gives 
\[ \fint_0^1 K\hat v_\alpha\leq \fint_0^1K^\# \hat v_{\alpha_\#}=\fint_0^1 K\hat v_{\alpha_\#}.\]

\paragraph{Conclusion}
In conclusion, we have established that 
\[ J^1(\alpha)=-2C_0\mathcal E_\alpha(\hat v_\alpha)+\fint_0^1 K\hat v_\alpha\leq -2C_0\mathcal E_{\alpha_\#}(\hat v_{\alpha_\#})+\fint_0^1 K\hat v_{\alpha_\#}=J^1(\alpha_\#),\] whence the conclusion. To guarantee uniqueness, it suffices to check that equality holds in the  P\'{o}lya-Szeg\"{o} inequality if and only if $\hat v_\alpha=\hat v_\alpha^\#$ or $\hat v_\alpha=(\hat v_\alpha)_{\#}$. This, however, follows from \cite{Ferone2003}. We then conclude that either $\alpha=\alpha^\#$ or $\alpha=\alpha_\#$. However, as $K=K^\#$ is not constant, the only possibility to also achieve equality in the Hardy-Littlewood inequality is to have $\alpha=\alpha_\#$.

\end{proof}

It remains to prove the Talenti inequality \eqref{Eq:TalentiLangford}. As we said, the proof can be quickly derived from the considerations of Langford in \cite{Langford}. For the sake of completeness, we sketch the details of the proof of \cite{Langford} here.

\begin{proof}[Proof of the Talenti inequality \eqref{Eq:TalentiLangford}]

We may extend $f$  by parity to an  even function (still denoted $f$ in the following)  on $(-1;1)$. Similarly, since $u_f$ satisfies Neumann boundary conditions at 0, it may be extended by parity to  $(-1;1)$. Thus extended, the function $u_f$ satisfies
\begin{equation}
\begin{cases}
-(u_f)''=f&\text{ in } (-1,1)\,, 
\\ u_f'(-1)=u_f'(1)=0\,, 
\\ \fint_{-1}^1 u_f=0.\end{cases}\end{equation} Furthermore, by parity of $f$, we have 
\[
\fint_{-1}^1 \xi f(\xi)d\xi=0.\] We will establish a comparison inequality on this new problem. To this end, let us introduce the fundamental solution of the Neumann Laplacian on $(-1,1)$. It is the function $K$ defined by 
\[
G(x):=\frac12x^2-|x|+\frac13.
\] We extend it to $\R$ by $2$-periodicity.
Consequently (see \cite[Proposition 5.2]{Langford}), we have  an explicit formula for $u_f$:
\[ u_f=G\star f:x\mapsto \fint_{-1}^1 K(x-y)f(y)dy.
\]
Furthermore, $G$ is, on $(0,1)$, a decreasing function, and so it is equal to its decreasing rearrangement. Now, by the Riesz convolution inequality \cite[Theorem 1]{Baernstein_1989}), for any $E\subset (-1,1)$ of volume $2r<2$,
\[
\fint_E u_f=\fint_{-1}^1 \mathds 1_{E}\left(G\star f\right)\leq \fint_{-1}^1 \mathds 1_{(-r,r)}\left(K\star f^*\right)\leq \fint_{-1}^1 \mathds 1_{(-r;r)} (G\star g^\#)=\fint_{-r}^r u_{g^\#},
\]
which gives the required result. However, $u_{g^\#}$ which is, a priori, only defined on $(-1,1)$, is necessarily symmetric with respect to 0, as $g^\#$ is. Thus, its restriction to $(0,1)$ is the solution of the Neumann problem with datum $g^\#$. This concludes the proof.
\end{proof}

\end{proof}

We note that the computations we carried out in the course of proving this theorem also provides an efficient way to reach Proposition \ref{Pr:K0/3}.

\begin{proof}[Proof of Proposition \ref{Pr:K0/3}]
We use the optimality conditions for the problem \eqref{Pv:1EqLarge2}. Note that $\overline \alpha$ is an interior point of $\mathcal M_=(\kappa,V_0)$, so that $\overline \alpha$ is a critical point if, and only if, for any admissible perturbation $h$ at $\overline \alpha$, 
\[ \dot J^1(\overline \alpha)[h]=0.\]

Adapting the proof of Lemma \ref{Le:GD} we obtain the following expression for the first order Gateaux-Derivative of $J^1$ at  any $\alpha $ in any admissible (at $\alpha$) direction $h$:
\[\dot J^1(\overline \alpha)[h]=\underbrace{2\left(\frac{2V_0-K_0}{M_0^2}\right)}_{=:C_1}\fint_\O \langle \n \hat v_\alpha\,, \n \dot{\hat v}_\alpha\rangle+\fint_\O K\dot{\hat v}_\alpha\text{ with }\begin{cases}-\Delta \dot{\hat v}_\alpha+M_0h=0&\text{ in }\O\,, 
\\ \frac{\partial \dot{\hat v}_\alpha}{\partial \nu}=0&\text{ on }\partial \O\,, 
\\ \fint\O \dot{\hat v}_\alpha=0.\end{cases}
\]
Introduce the adjoint state $q$ as the solution of 
\begin{equation}
\begin{cases}
-\Delta q=K-{K_0}{}&\text{ in }\O\,, 
\\\frac{\partial q}{\partial \nu}=0&\text{ on }\partial \O,
\\ \fint_\O q=0.\end{cases}\end{equation} This allows to rewrite $\dot{J}^1(\alpha)[h]$ as \begin{align*}
\dot{ J}^1(\overline \alpha)[h]&=C_1\fint_\O \langle \n \hat v_\alpha,\n \dot{\hat v}_\alpha \rangle+\fint_\O K\dot{\hat v}_\alpha
\\&=C_1\fint_\O \hat v_\alpha h-M_0\fint_\O qh.
\end{align*}
Thus, if $\overline \alpha$ is a critical point of $J^1$, we must have
\[ C_1\hat v_{\overline \alpha}-M_0q=\mu\] where $\mu$ is a real constant.
Taking the laplacian on both sides of this equality, this implies that 
\[ C_1M_0(K-\overline \alpha-M_0)=K-{K_0}{}.\]
However,
\begin{align*}
C_1 M_0(K-\overline \alpha-M_0)=K-{K_0}{}&\Leftrightarrow K-\overline \alpha-M_0=\frac{K-{K_0}}{C_1 M_0}
\\&\Leftrightarrow \left(K-{K_0}{}\right)\left(1-\frac1{C_1 M_0}\right)=0.
\end{align*}
We develop
\[ 1-\frac1{C_1 M_0}=1-\frac{M_0}{2{K_0}-4\overline \alpha}=\frac{2{K_0}-4\overline\alpha-{K_0}+\overline \alpha}{2{K_0}-4\overline \alpha}=\frac{{K_0}-3\overline \alpha}{2{K_0}-4\overline \alpha}.
\] and we thus derive the conclusion: for $\overline \alpha$ to be a critical point, we must either have $K$ constant, or 
\[ K_0-3V_0=0.\]

\end{proof}

\section{Proofs of Theorems \ref{Th:NashExists} and  \ref{Th:NashLarge}}\label{Se:3}
\begin{proof}[Proof of Theorem \ref{Th:NashExists}]

The proofs of Theorems \ref{Th:NashExists} follow in an almost straightforward manner from the previous considerations on single player games.

Indeed, observe the following fact: from Theorem \ref{Th:Concave}, $\mu>0$ being fixed, in the one-dimensional case, there exists $\overline \delta_1\,, \overline \delta_2>0$ such that
\[ V_1<\overline \delta_1\,, V_2<\overline \delta_2 \Rightarrow \forall \alpha_2 \in \mathcal M_=(\kappa_2,V_2)\,,  I_{1,\mu}(\cdot,\alpha_2) \text{ is concave in $\alpha_1$},\]and
\[ V_1<\overline \delta_1\,, V_2<\overline \delta_2 \Rightarrow \forall \alpha_1 \in \mathcal M_=(\kappa_1,V_1)\,,  I_{2,\mu}(\alpha_1,\cdot) \text{ is concave in $\alpha_2$}.\] Indeed, it suffices, for the concavity of $I_{1,\mu}$, to apply Theorem \ref{Th:Concave} with $K-\alpha_2$ as a resources distribution, and similarly for the concavity of $I_{2,\mu}$.

Similarly, in any dimension $d$, we obtain $\delta_1\,, \delta_2>0$ such that, if 
\[ V_1+V_2\leq \delta_1\,, \Vert K-\overline K\Vert_{L^1(\O)}\leq \delta_2\] then  the maps $I_{1,\mu}(\cdot,\alpha_2)$ and $I_{2,\mu}(\alpha_1,\cdot)$ are concave in their respective variables.

So what matters about the assumptions of smallness of $V_1,V_2$ (and $\Vert K-\overline K\Vert_{L^1(\O)}$) is that the functionals for which we are seeking a Nash equilibrium are concave. The rest of the proof does not depend in any way on the dimension.

This concavity property is the natural one in the context of existence of Nash equilibria. Indeed, let us recall \cite{Glicksberg_1952,Nash1951}: if $\Delta_i\subset \R^{d}$ ($i=1,2$) is a convex, compact set, and if $L_i=\Delta_1\times \Delta_2\to \R$ is a concave, continuous function ($i=1,2$) then the game
\[\text{ find $x_i^*\in \Delta_i$ ($i=1,2$) such that } \begin{cases}L_1(x_1^*,x_2^*)=\max_{x_1\in \Delta_1}L_1(x_1,x_2^*)\,,\\
L_2(x_1^*,x_2^*)=\max_{x_2\in \Delta_2}L_2(x_1^*,x_2)\end{cases}
\]
has a Nash equilibrium $(x_1^*,x_2^*)$.

To apply this result to the situation under investigation in the present paper, we need to approximate our infinite dimensional problem by a finite dimensional one.

\paragraph{Reduction to the finite-dimensional setting}
Let us explain how this reduction is carried out: consider, a fixed integer $N$ being fixed, a measurable partition of $\O$ as 
\begin{equation}\label{Eq:Partition}\O=\sqcup_{k=0}^{n(N)}\omega_{k,N}\end{equation} where, for any $k\in \{0,\dots,n(N)\}$ we have 
\[ \mathrm{Vol}(\omega_{n(N)})\leq 2^{-N}.\]

We consider the auxiliary admissible sets
\[ 
\mathcal M^N_{=}(\kappa_i,V_i):=\left\{\sum_{k=0}^N a_k\mathds 1_{\omega_{k,N}}\,, 0\leq a_k\leq \kappa_i\,,\sum_{k=0}^{n(N)} a_k= V_i\right\} \quad (i=1,2),
\]and we define $\Delta_i:=\mathcal M^N_=(\kappa_i,V_i)$ ($i=1,2$).

Of course, for any $\alpha_2\in \Delta_2$, the map $I_{1,\mu}(\cdot,\alpha_2)$ is concave on $\Delta_1$ and, similarly,  for any $\alpha_1\in \Delta_1$, the map $I_{2,\mu}(\alpha_1,\cdot)$ is concave on $\Delta_2$. The continuity of $I_{i,\mu}$ ($i=1,2$) on $\Delta_1\times \Delta_2$ is obvious. Thus, by the existence theorem for pure Nash equilibria, we conclude that there exists a Nash equilibrium $(\alpha_{1,N}^*,\alpha_{2,N}^*)$ for the problem
\[\text{ find $\alpha_i^*\in \Delta_i$ ($i=1,2$) such that } \begin{cases}I_{1,\mu}(\alpha_1^*,\alpha_2^*)=\max_{\alpha_1\in \Delta_1}I_{1,\mu}(\alpha_1,\alpha_2^*)\,,\\
I_{2,\mu}(\alpha_1^*,\alpha_2^*)=\max_{\alpha_2\in \Delta_2}I_{2,\mu}(\alpha_1^*,\alpha_2).\end{cases}
\]

\paragraph{Conclusion of the proof}
We fix, for any $N\in \N$, a Nash equilibrium $(\alpha_{1,N}^*,\alpha_{2,N}^*)$. Up to a (non-relabelled) subsequence, there exists a couple $(\alpha_1^*,\alpha_2^*)\in \mathcal M_=(\kappa,1,V_1)\times \mathcal M_=(\kappa_2,V_2)$ such that 
\[ \alpha_{i,N}^*\underset{N\to \infty}\rightharpoonup \alpha_i^*\quad (i=1,2)\] where the convergence holds weakly in $L^\infty$-*.

However, this weak convergence implies that, weakly in $W^{2,2}(\O)$ (in particular, strongly in $L^2(\O)$), there holds 
\[ \theta_{\alpha_{1,N}^*,\alpha_{2,N}^*,\mu}\underset{N\to \infty}\rightarrow \theta_{\alpha_1^*,\alpha_2^*}.\] Let us check that $(\alpha_1^*,\alpha_2^*)$ is a Nash equilibrium for our initial problem.

To this end, let $\alpha_1\in \mathcal M_=(\kappa_1,V_1)$ and let us prove that 
\[ I_{1,\mu}(\alpha_1^*,\alpha_2^*)\geq I_{1,\mu}({\alpha_1,\alpha_2^*}).\] 
By \eqref{Eq:Partition}, there exists a sequence $\{\alpha_{1,N}\}_{N\in \N}$ such that, for any $N\in \N$, $\alpha_{1,N}\in \mathcal M_=^N(\kappa_1,V_1)$ and such that, strongly in $L^1(\O)$, 
\[ \alpha_{1,N}\underset{N\to \infty}\rightarrow \alpha_1.\]
By definition of $\alpha_{1,N}^*$ we have, for any $N\in \N$, 
\[ I_{1,\mu}(\alpha_{1,N}^*,\alpha_{2,N}^*)\geq I_{1,\mu}(\alpha_{1,N},\alpha_{2,N}^*).
\]
Passing to the limit as $N\to \infty$ we obtain 
\[I_{1,\mu}(\alpha_1^*,\alpha_2^*)\geq I_{1,\mu}(\alpha_1,\alpha_2^*).
\]
As the symmetric property for $I_2$ (\emph{i.e} that $\alpha_2^*$ is a maximiser of $I_{2,\mu}(\alpha_1^*,\cdot)$ over $\mathcal M_=(\kappa_2,V_2)$) is proved in the very same fashion we omit it here.
The conclusion follows: $(\alpha_1^*,\alpha_2^*)$ is indeed a Nash equilibrium.
\end{proof}

\begin{proof}[Proof of Theorem \ref{Th:NashLarge}]
The proof of this theorem follows from Theorem \ref{Th:AsymptoticSingle}. Indeed, observe that, if $V_1\,, V_2\geq \frac{K_0}4$, then we have 
\[ V_1\geq \frac{K_0-V_2}2\,, V_2\geq \frac{K_0-V_1}2.\] Consequently, for any $\alpha_2\in \mathcal M_=(\kappa_2,V_2)$, it follows from Theorem \ref{Th:AsymptoticSingle} that, for any $\alpha_2\in \mathcal M_=(\kappa_2,V_2)$ fixed, the map
\[ \alpha_1\mapsto I_1^1(\alpha_1,\alpha_2)\] is strictly convex on $\mathcal M_=(\kappa_1,V_1)$. Similarly, for any fixed $\alpha_1\in \mathcal M_=(\kappa_1,V_1)$, the map
\[ \alpha_2\mapsto I_1^2(\alpha_1,\alpha_2)\] is strictly convex on $\mathcal M_=(\kappa_2,V_2)$.

Now let us take $\alpha_1^*\,, \alpha_2^*$ as defined in the statement of the theorem. We apply Theorem \ref{Th:AsymptoticSingle}: taking as a resources distribution $K=K_0-\alpha_2$, which is a non-constant, non-decreasing function,  we deduce (from the convexity of the functional) that any solution of 
\[ \max_{\mathcal M_=(\kappa_1,V_1)}I_1^1(\alpha_1,\alpha_2^*)\] is a bang-bang function and (from Theorem \ref{Th:AsymptoticSingle}) that the solution is a non-increasing function. Thus, the solution is exactly $\alpha_1^*$.

Similarly, $\alpha_2^*$ is the solution of 
\[ \max_{\mathcal M_=(\kappa_2,V_2)}I_2^1(\alpha_1^*,\alpha_2).\] Thus, $(\alpha_1^*,\alpha_2^*)$ is indeed a Nash equilibrium in the sense of Definition \ref{De:NashAsymptotic}.
\end{proof}
\section{Numerical simulations and comments}\label{Se:NumericsSingleCommented}

\subsection{Simulations of the Optimal harvesting problem}
{\color{blue}
In this section we will consider a random uniformly regular positive function $K:[0,1]^2\mapsto\mathbb{R}$ represented in Figure \ref{Fig:capacity} and we will consider the optimal harvesting problem \eqref{Pv:1Ineq}. Figure \ref{Fig:optimalH} exemplifies the richness of different qualitative behaviours that this simple problem can show by just modifying the admissible class of admissible controls. More specifically we remark the following
\begin{enumerate}
\item The switch function $\theta(1-p)$, can be constant and hence the controls do not need to be be bang-bang. This is a case already emphasized in the particular case in which $K(x)=1$. 
\item For certain parameters, the switch function does not have any flat region and it is uniformly positive. In this case, we have a bang-bang strategies due to the well known bathtub principle: the optimal policy will be the characteristic function $\kappa\mathbbm{1}_\omega$ where $\omega$ is the level set of the switch function with volume $\kappa V_0$.
\item The switch function can combine both aspects, it can have flat region and a nonflat one. This is the case of the third column in Figure \ref{Fig:optimalH}, where a qualitative mixture of the phenomenology described in the previous two points is observed. In this case, one observes that the flat region is at the maximum of the switch function. This is the reason why the optimal strategy does not saturate the upper bound $\alpha\leq \kappa$ and shows a non-bang-bang structure. In addition, as in the previous case, one can observe that the support of $\alpha$ is not the whole square $[0,1]^2$. This can be seen also with the same philosophy of the bathtub principle, observing that the optimal strategies have to be supported in a subset of a level set of the switch function. Indeed, if the integral constraint $V_0$ satisfied that $\kappa V_0=|\{x\in \mathbb{R}^2:\quad \theta(x)(1-p(x))=\max_{x} \theta(x)(1-p(x))\}|$ we would nonetheless observe a bang-bang strategy. However, for this simulation, the above property is not satisfied and one has the $\kappa V_0$ is smaller than the volume of the level set corresponding to the maximum of the switch function. Hence, what one observes is $\mathrm{supp}(\alpha)\subset\{x\in \mathbb{R}^2:\quad \theta(x)(1-p(x))=\max_{x} \theta(x)(1-p(x))\}$.
\end{enumerate}

\begin{figure}
\centering
\includegraphics[scale=0.25]{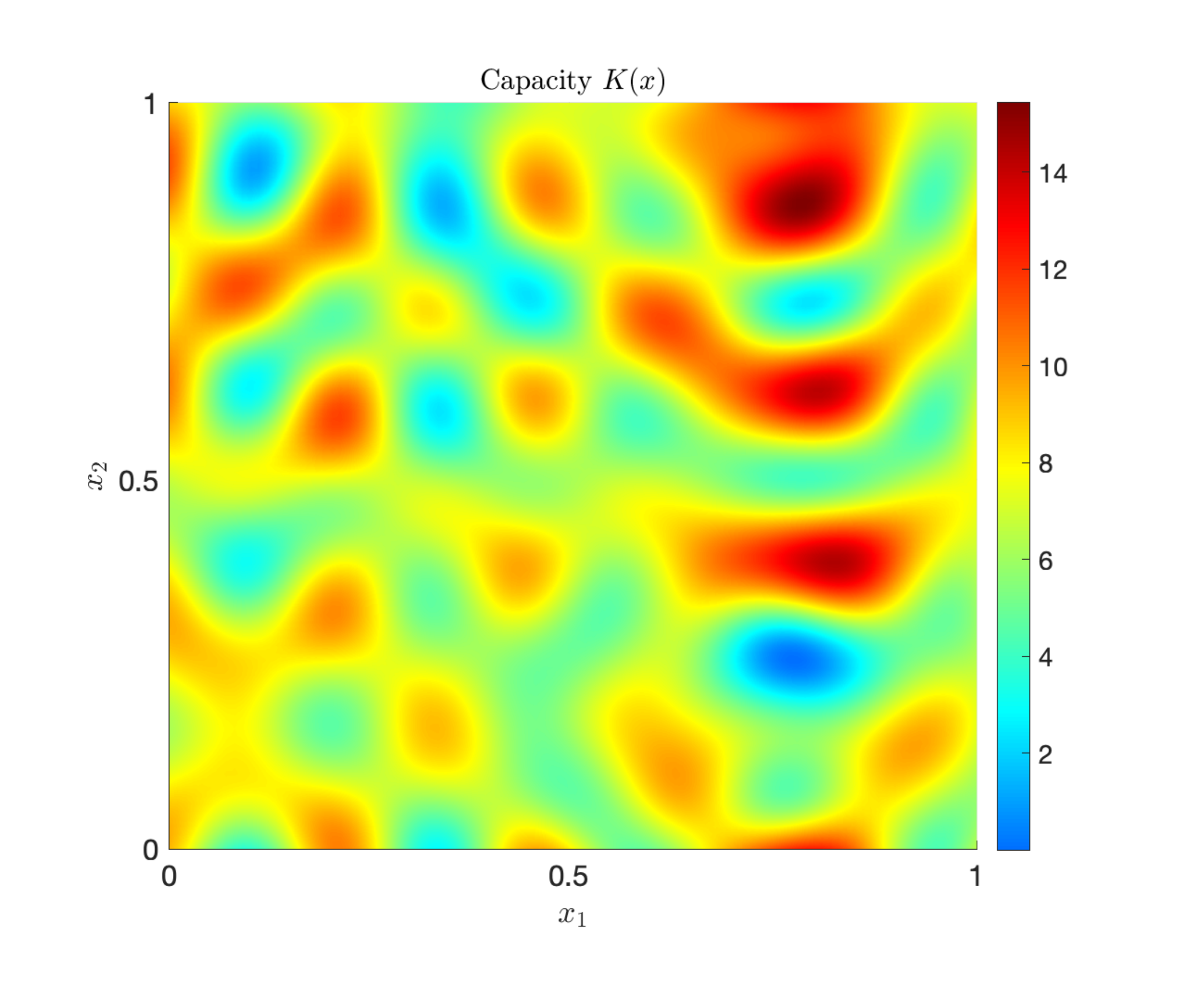}
\caption{Capacity $K(x)$ used for the simulations shown in Figure \ref{Fig:optimalH}.}\label{Fig:capacity}
\end{figure}

}

\begin{figure}
\centering
\includegraphics[scale=0.22]{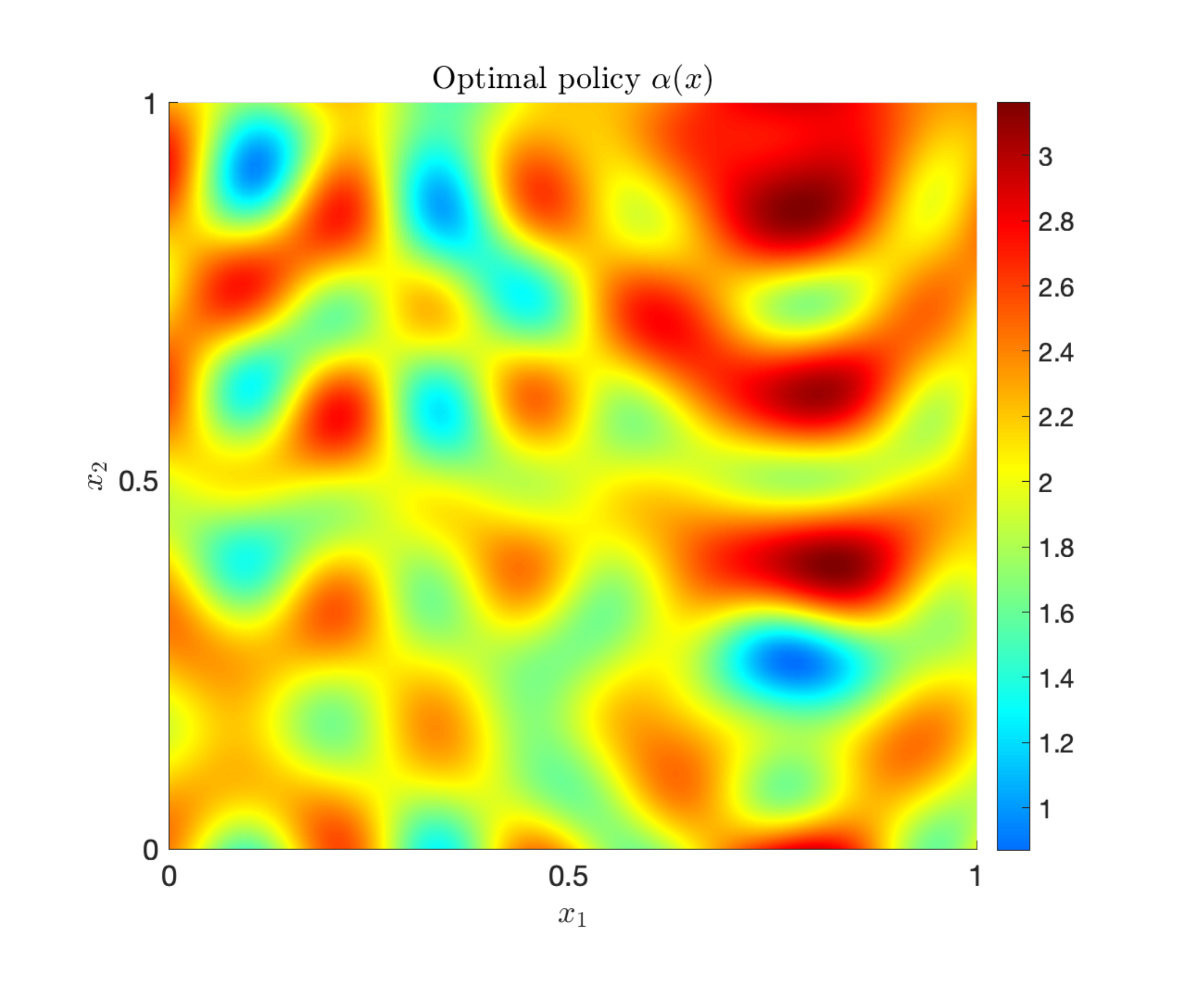}
\includegraphics[scale=0.22]{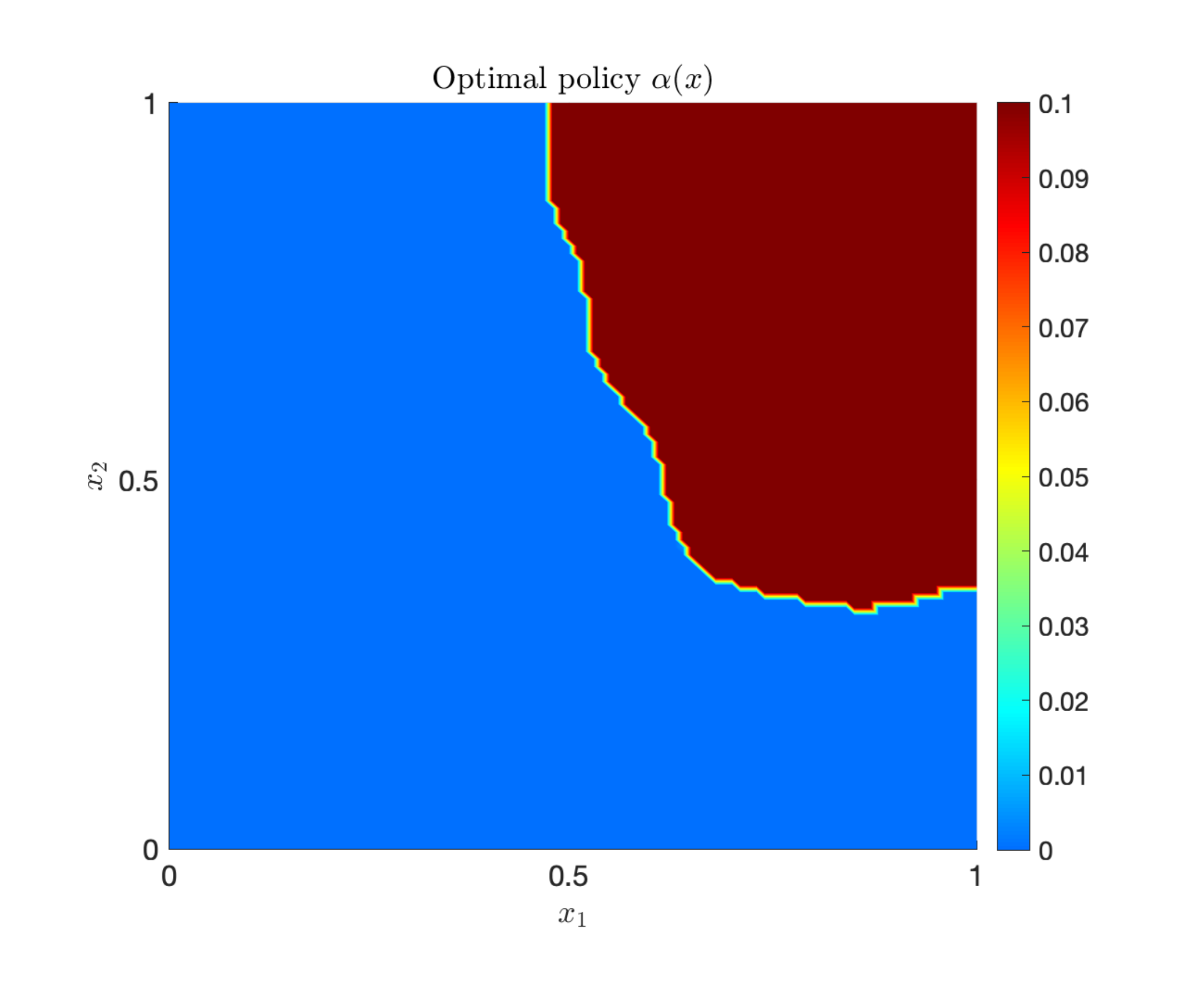}
\includegraphics[scale=0.22]{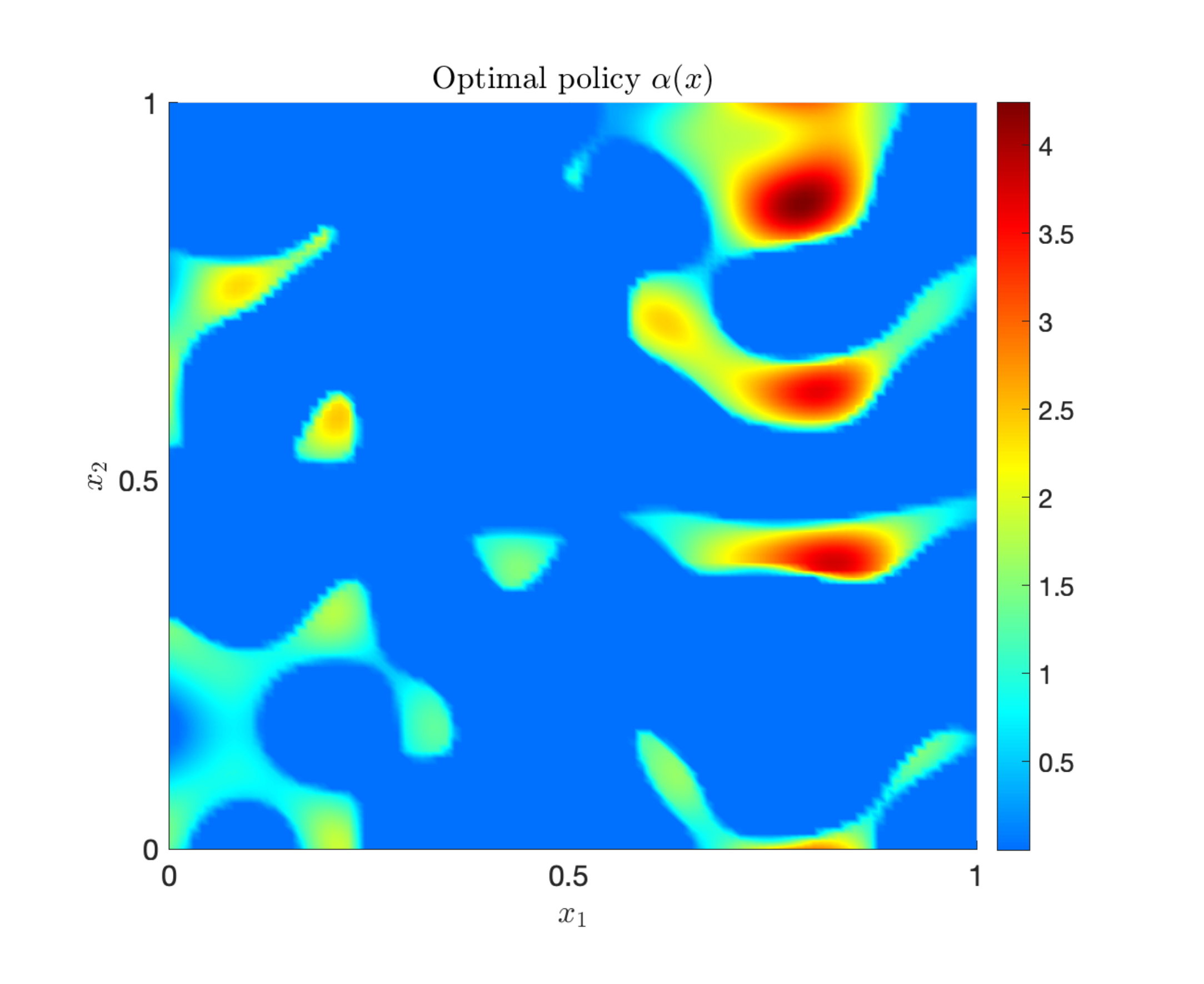}

\includegraphics[scale=0.22]{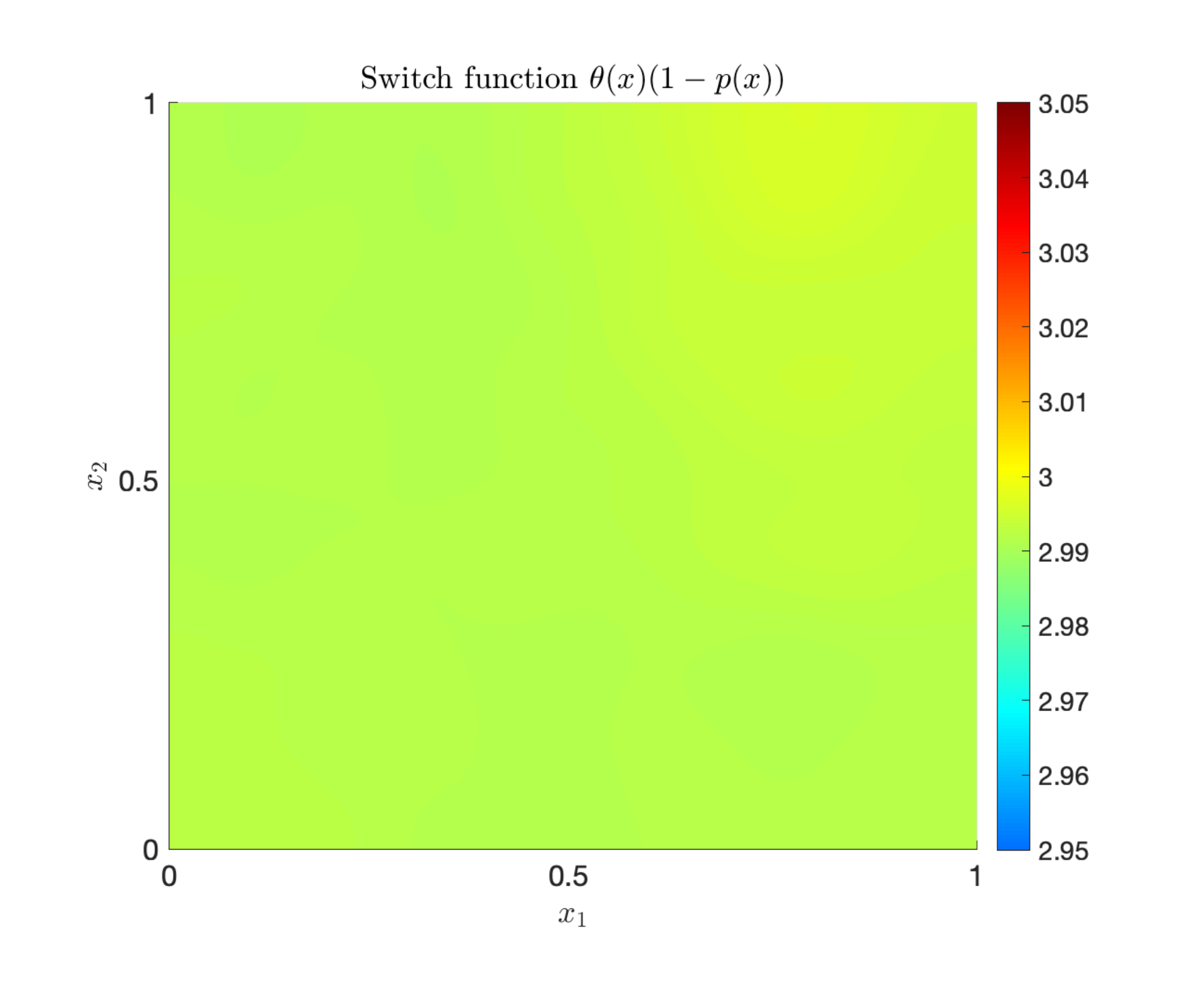}
\includegraphics[scale=0.22]{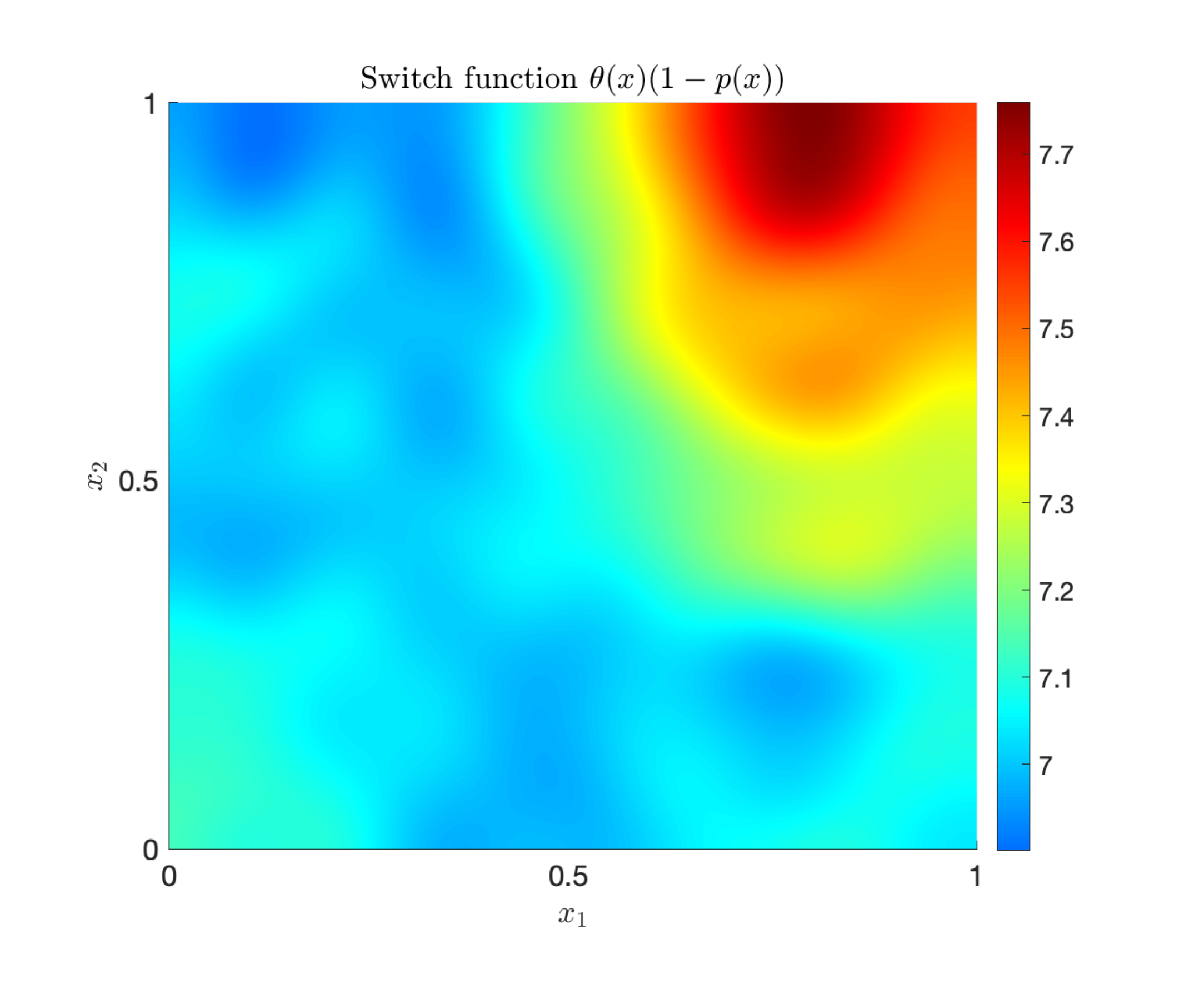}
\includegraphics[scale=0.22]{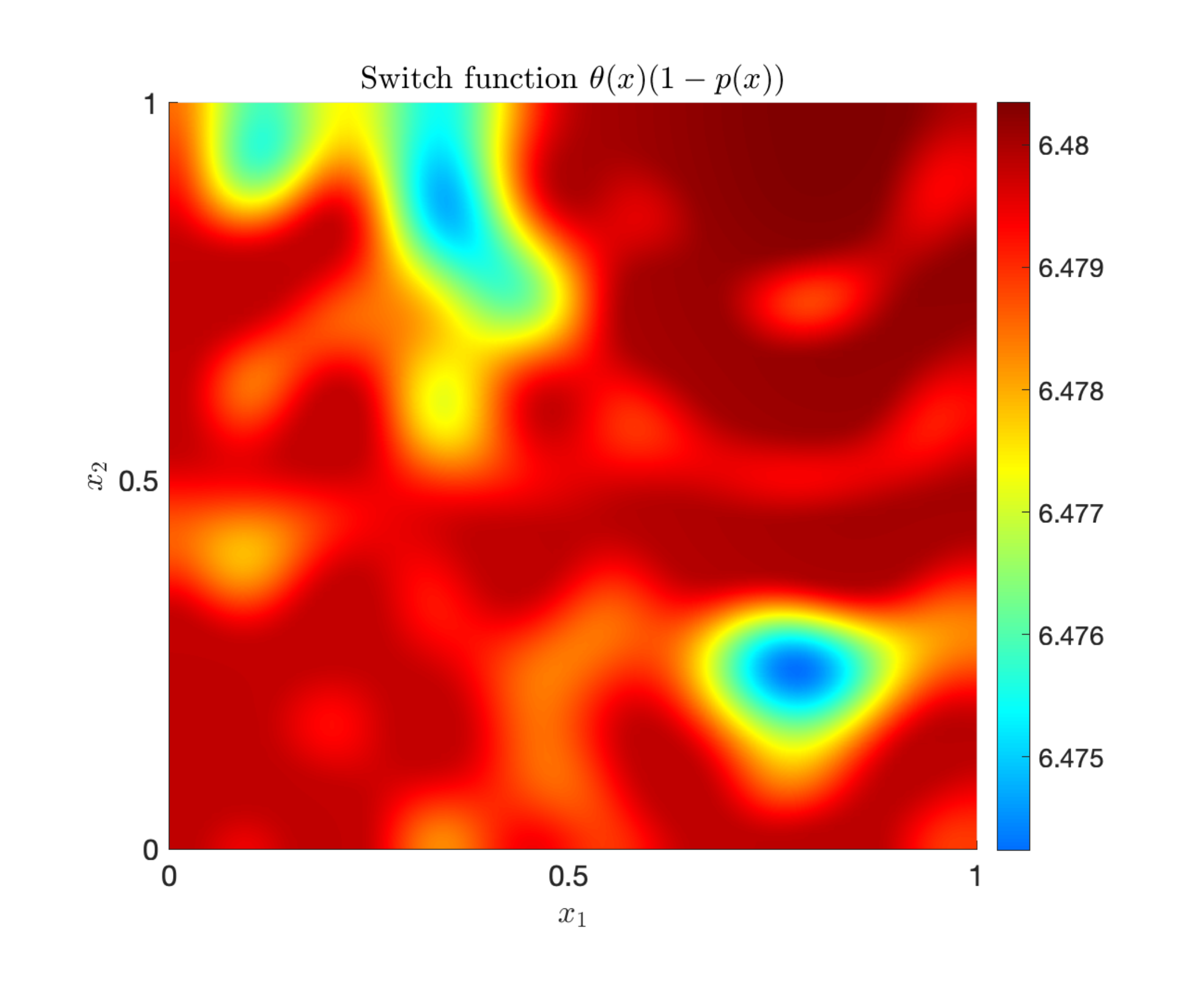}
\caption{In this figure, several optimal policies $\alpha$  (first row) are depicted along with the corresponding switch functions (second row). All the simulations have been done using the same capacity $k(x)$ of Figure \ref{Fig:capacity} and same diffusivity $\mu=1$ although with different control limitations. In the first and second column the integral constraint is $V_0=0.3$, but $\kappa=7$ in the first column while $\kappa=0.1$ in the second column. In the third column, $V_0=0.05$ and $\kappa=7$.}\label{Fig:optimalH}
\end{figure}

\subsection{Simulations of the Nash equilibria}

{\color{blue}
In this section we will provide several numerical simulations that illustrate some of the phenomenology described in the article. Furthermore, it will help us to conjecture further research on the topic. For all the simulations we have employed a fixed-point algorithm to find Nash equilibria. The algorithm used is the following
\begin{algo}\label{algo1}
\textcolor{white}{.}\newline
\vspace{-0.6cm}
\begin{enumerate}
 \item \textbf{Initialization}: Take a pair of strategies, $\alpha_1^{(0)},\alpha_2^{(0)}\in \mathcal{M}_\leq(\Omega)$.
 \item \textbf{Recursion}: For every $k\in \mathbb{N}$, solve sequentially the optimization problems
 \begin{equation*}
  \max_{\alpha_2^{(k)}\in\mathcal{M}_\leq (\Omega)} \int_{\Omega} \alpha_2^{(k)} \theta dx
 \quad\text{ restricted to }\quad
 \begin{cases}
    -\mu\Delta \theta= \theta(K(x)-\theta)-\alpha_1^{(k-1)}(x)\theta-\alpha_2^{(k)}(x)\theta,\\
  \text{+ Boundary conditions.}
 \end{cases}
 \end{equation*}
 and then
  \begin{equation*}
  \max_{\alpha_1^{(k)}\in\mathcal{M}_\leq(\Omega)} \int_{\Omega} \alpha_1^{(k)} \theta dx
\quad\text{ restricted to }
 \begin{cases}
    -\mu\Delta \theta= \theta(K(x)-\theta)-\alpha_1^{(k)}(x)\theta-\alpha_2^{(k-1)}(x)\theta,\\
  \text{+ Boundary conditions}.
 \end{cases}
 \end{equation*}
\end{enumerate}
\end{algo}
If the algorithm \eqref{algo1} converges, i.e.
$$ \alpha_1^{(k)}\to \alpha_1^*\text{ and } \alpha_2^{(k)}\to \alpha_2^*$$
then, the pair $(\alpha_1^*,\alpha_2^*)$ is a Nash equilibrium by definition. We do not have a proof of convergence of the above algorithm. The proof itself would imply the existence of Nash equilibria (but not the other way around). In the case of potential games, Algorithm \ref{algo1} always converges. However, it can be seen, using a contradiction argument, that our game is not a potential game.

 We will employ this algorithm numerically to try to discover if a Nash equilibrium exists and we will use the stopping condition $\|\alpha_i^{(k+1)}-\alpha_i^{(k)}\|_{L^2}\leq \texttt{tol}$ for $i=1,2$. Since the algorithm above is forced to stop given a tolerance, one cannot guarantee that the convergence is at a Nash equilibria, rather in an $\epsilon$-Nash equilibria.

\begin{definition}[$\epsilon$-Nash equilibria]
Fix $\epsilon\geq 0$. A pair of strategies $(\alpha_1,\alpha_2)\in \mathcal{M}(\Omega)$ are an $\epsilon$-Nash equilibria if

$$ \forall \alpha\in \mathcal{M}\quad I_1(\alpha_1,\alpha_2)\geq I_1(\alpha,\alpha_2)-\epsilon,\qquad  \forall \alpha\in \mathcal{M}\quad I_2(\alpha_1,\alpha_2)\geq I_2(\alpha_1,\alpha)-\epsilon $$

\end{definition}

Note that if $\epsilon=0$ one has the definition of a Nash equilibria. Furthermore, it is important  to observe that an $\epsilon$-Nash equilibria (with $\epsilon>0$) \emph{does not need to be close to a Nash equilibria}. Moreover, it is worth noting that an $\epsilon$-Nash equilibria can exist without a Nash equilibria existing. If it converges, Algorithm \ref{algo1}, converges to an $\epsilon$-Nash equilibria (see Proposition \ref{prop-enash} in the Appendix).

\subsubsection{Symmetric bounds for both players}

}

\begin{figure}
\centering
\includegraphics[scale=0.2]{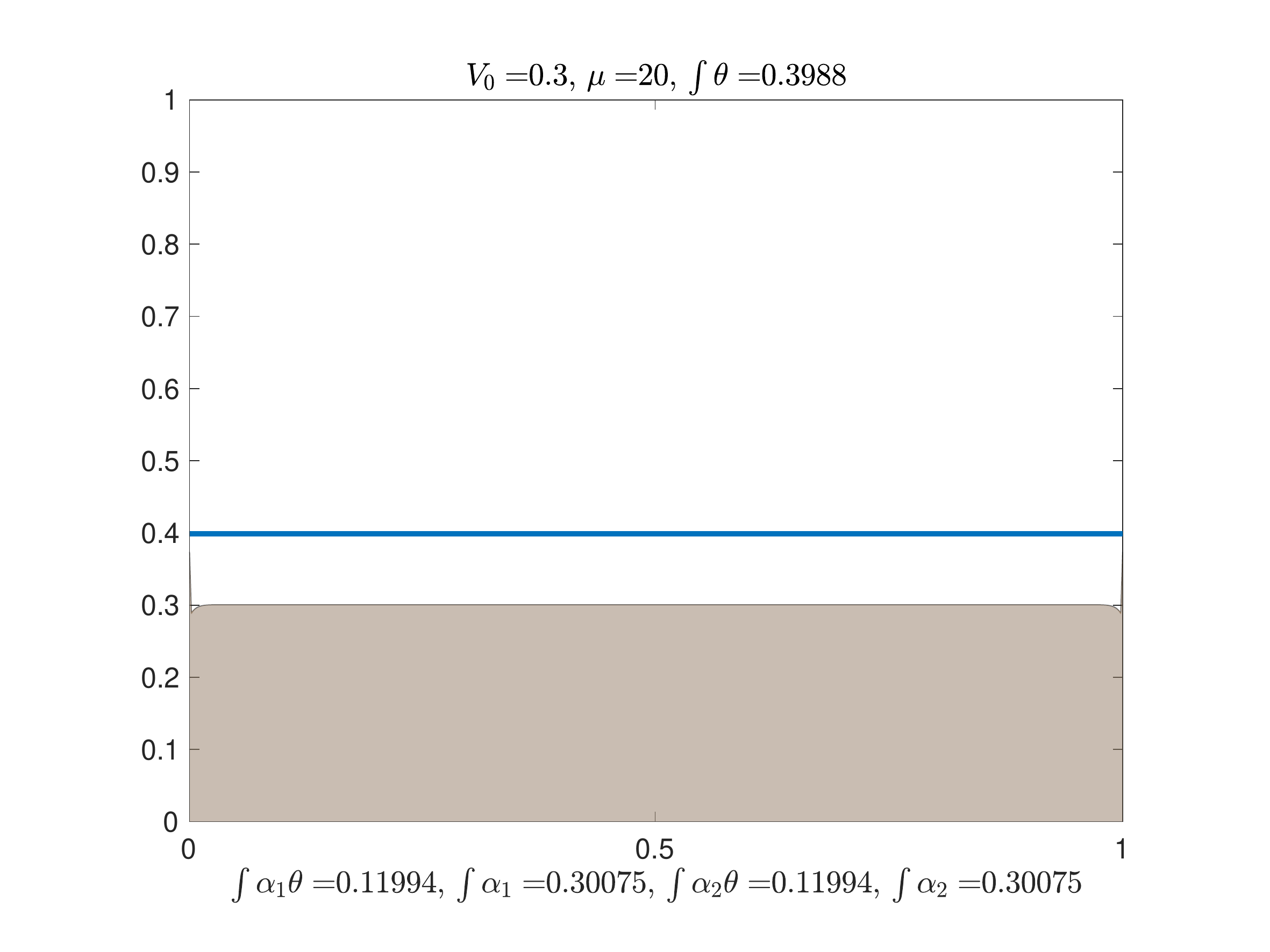}
\includegraphics[scale=0.2]{simus1d/pic1dmu20b0.4.pdf}
\includegraphics[scale=0.2]{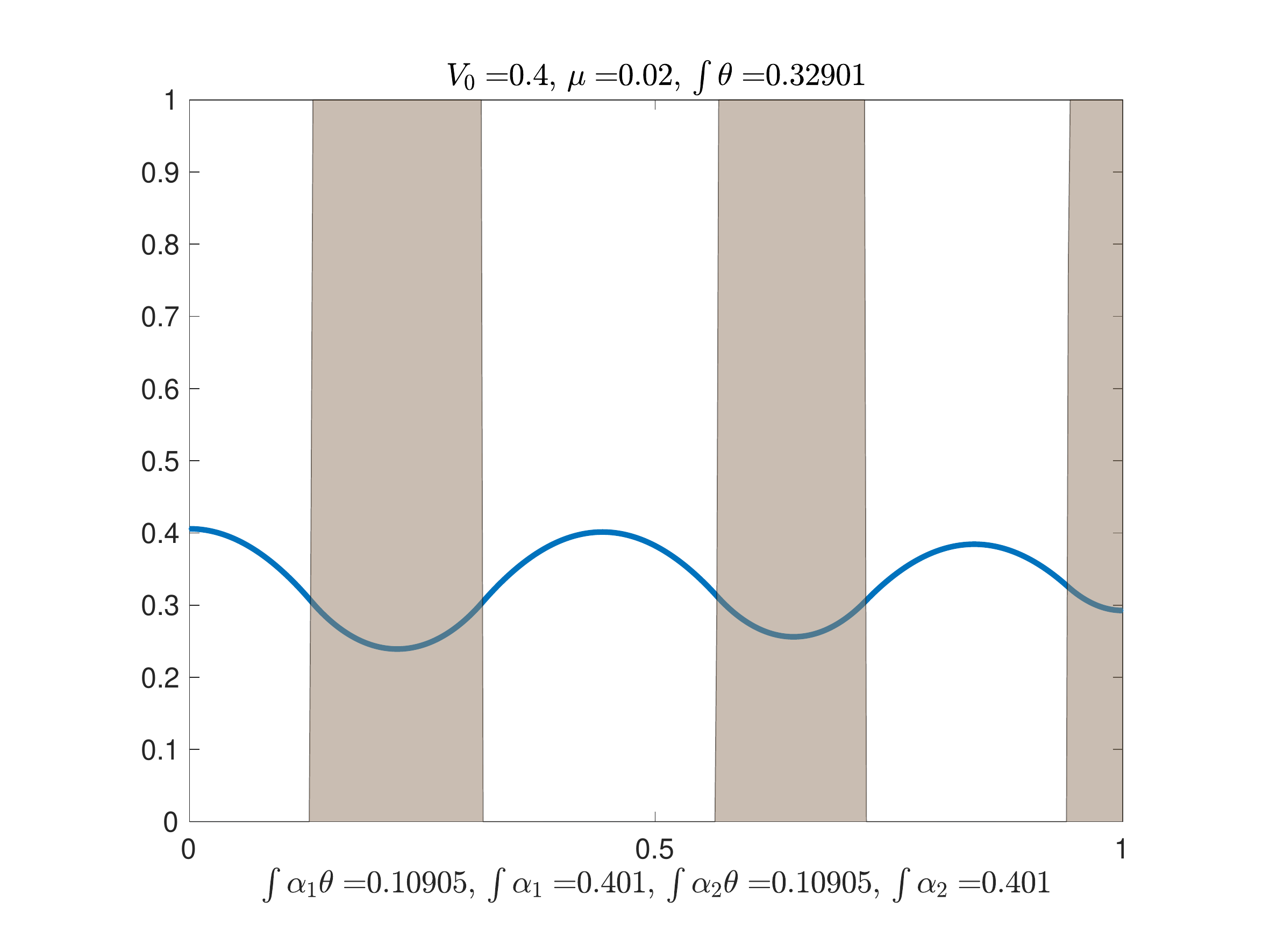}

\includegraphics[scale=0.2]{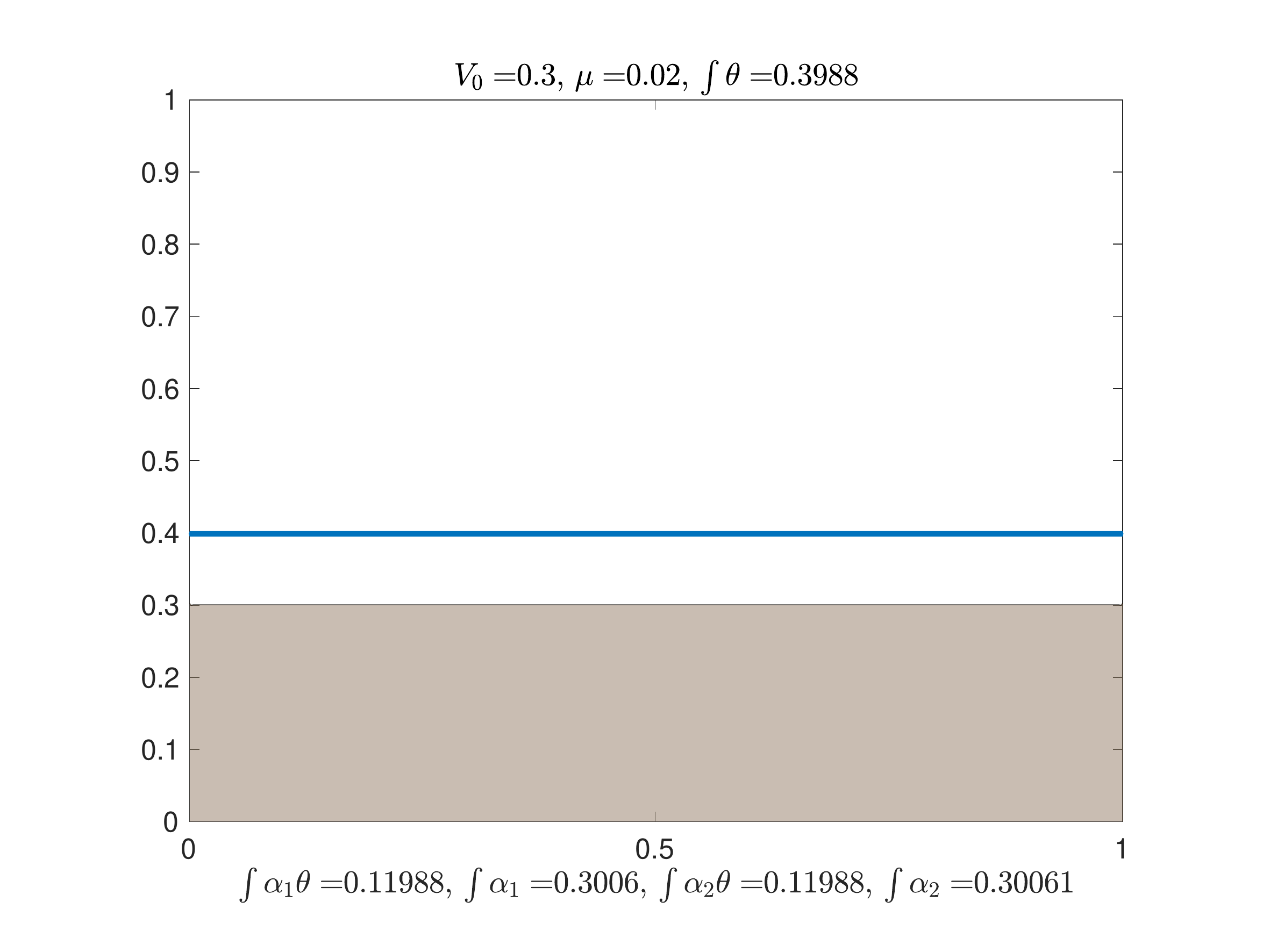}
\includegraphics[scale=0.2]{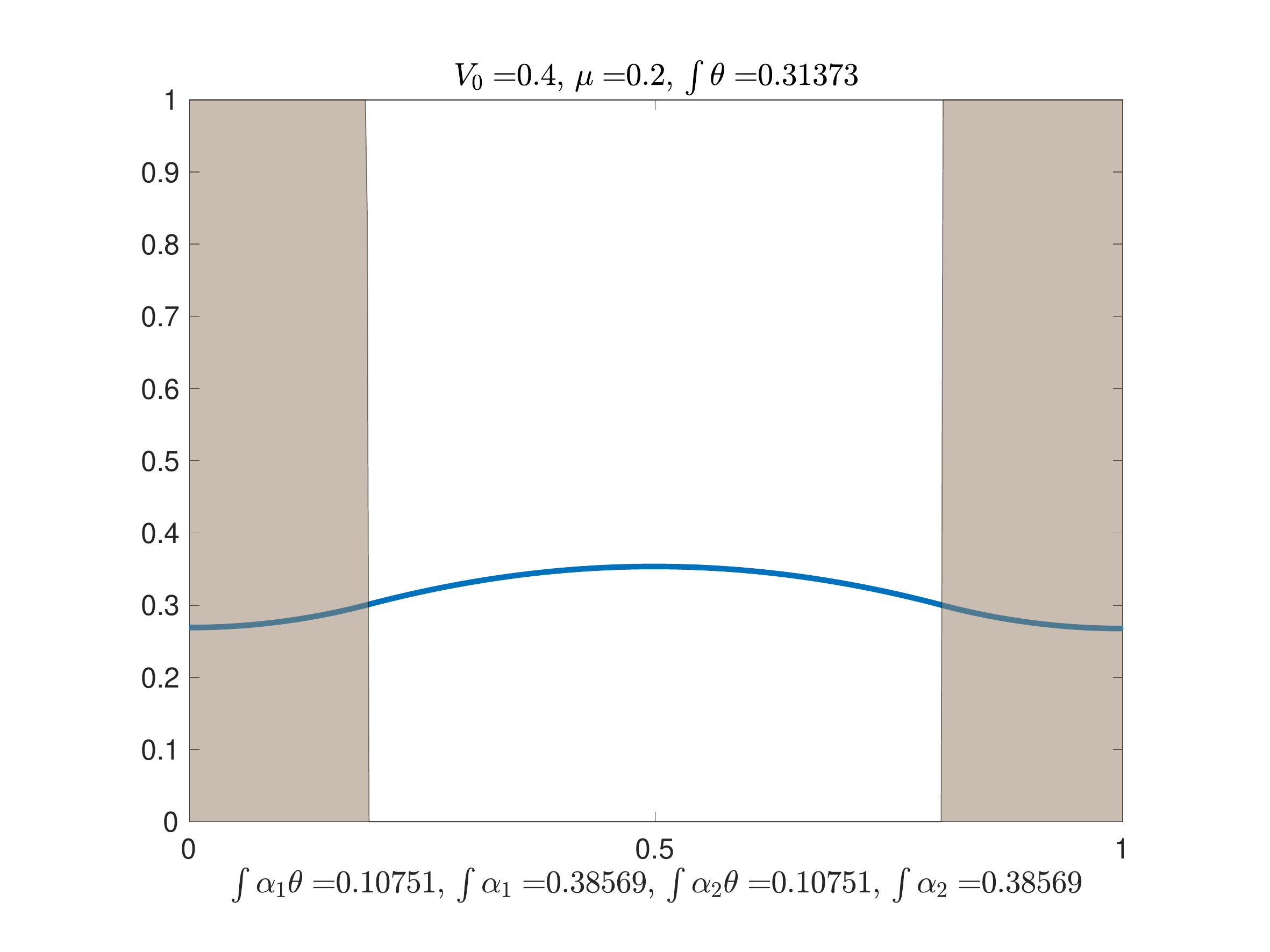}
\includegraphics[scale=0.2]{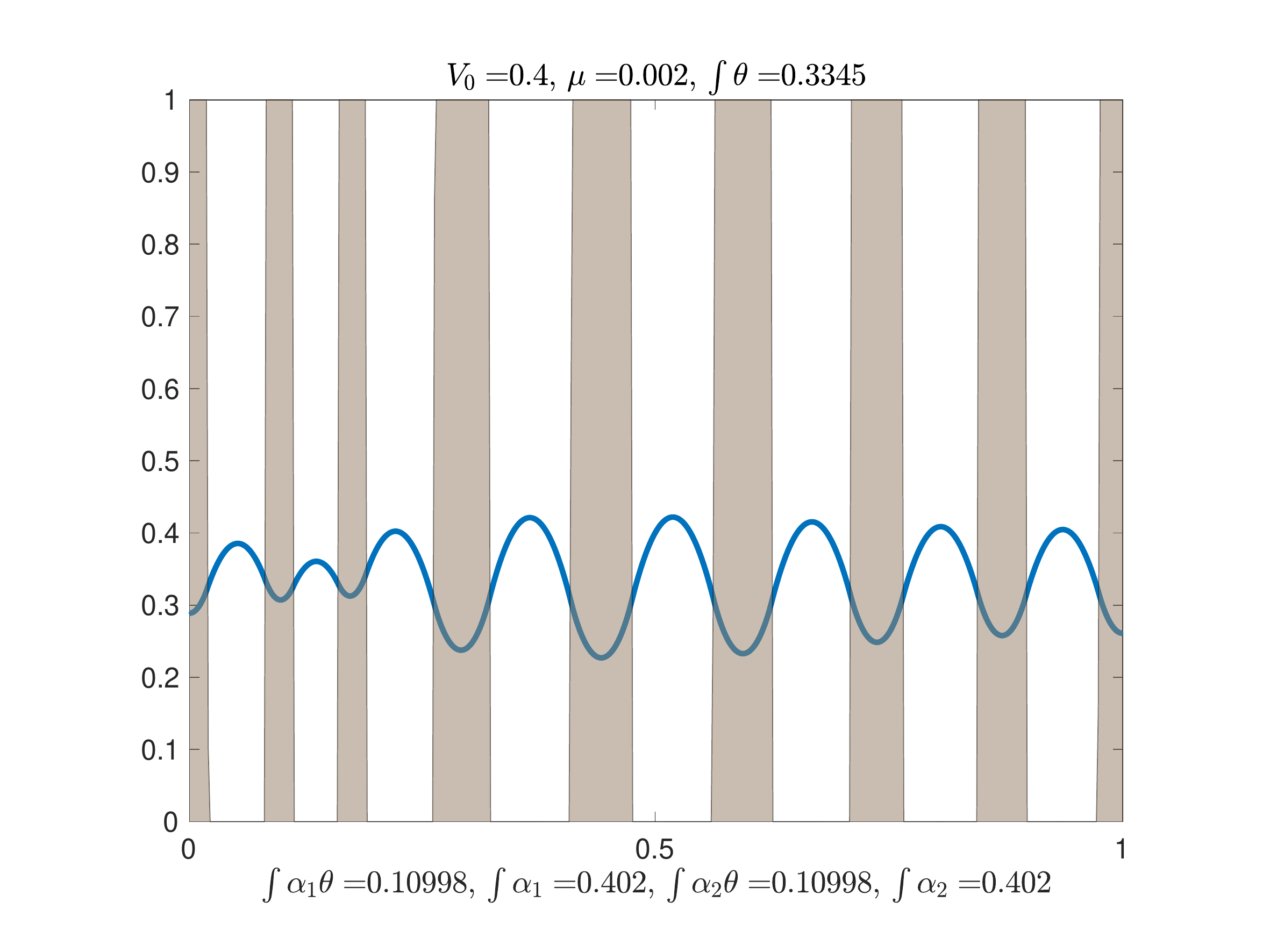}

\caption{The blue line is the state $\theta_{\alpha_1,\alpha_2}$. The grey area indicates the are the subgraphs of the strategy of the players (both players play the same strategy). Both players have the same capacity and $K(x)=1$.}\label{Fig:1dNash}

\end{figure}

\begin{figure}
\begin{center}
\includegraphics[scale=0.22]{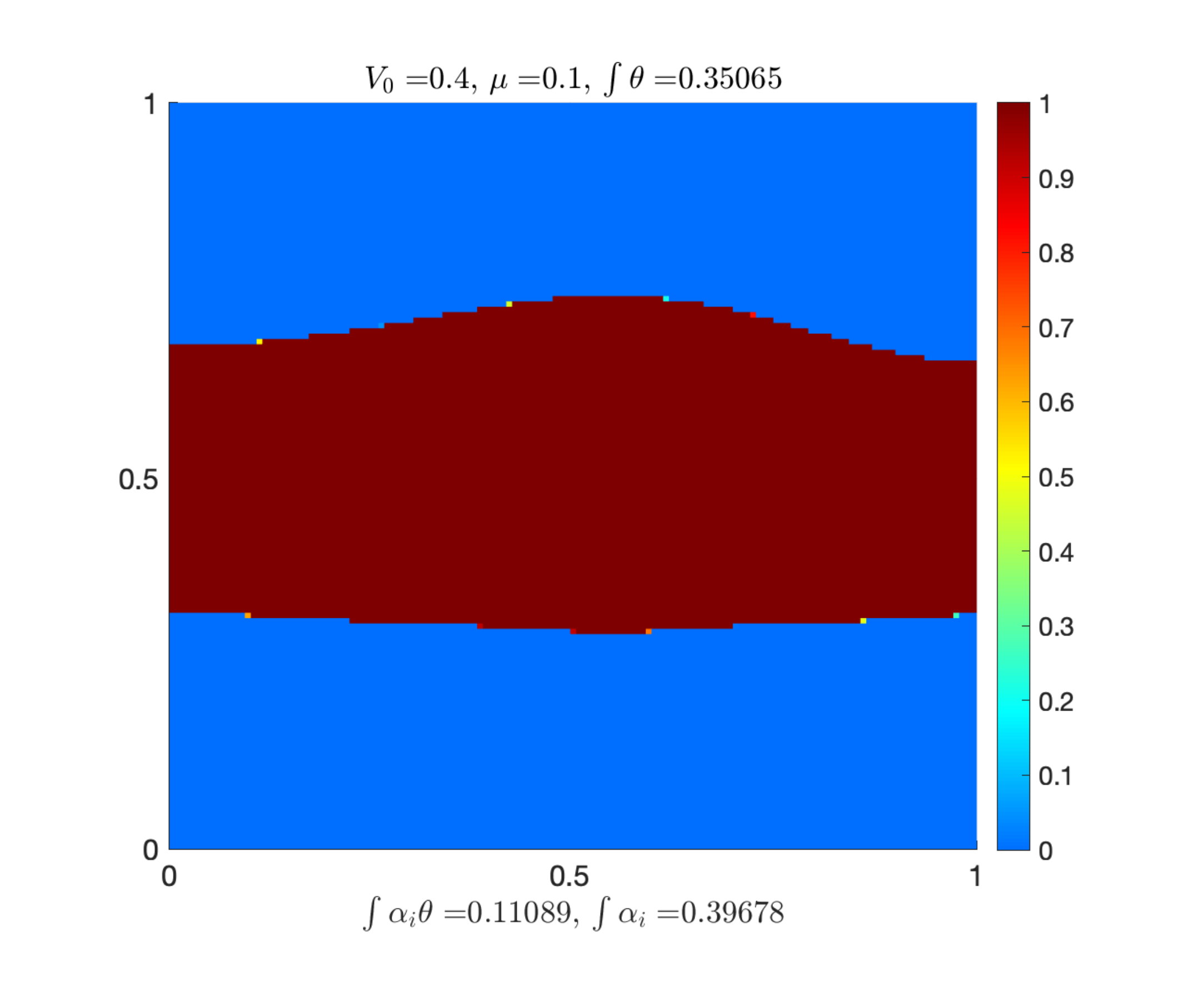}
\includegraphics[scale=0.22]{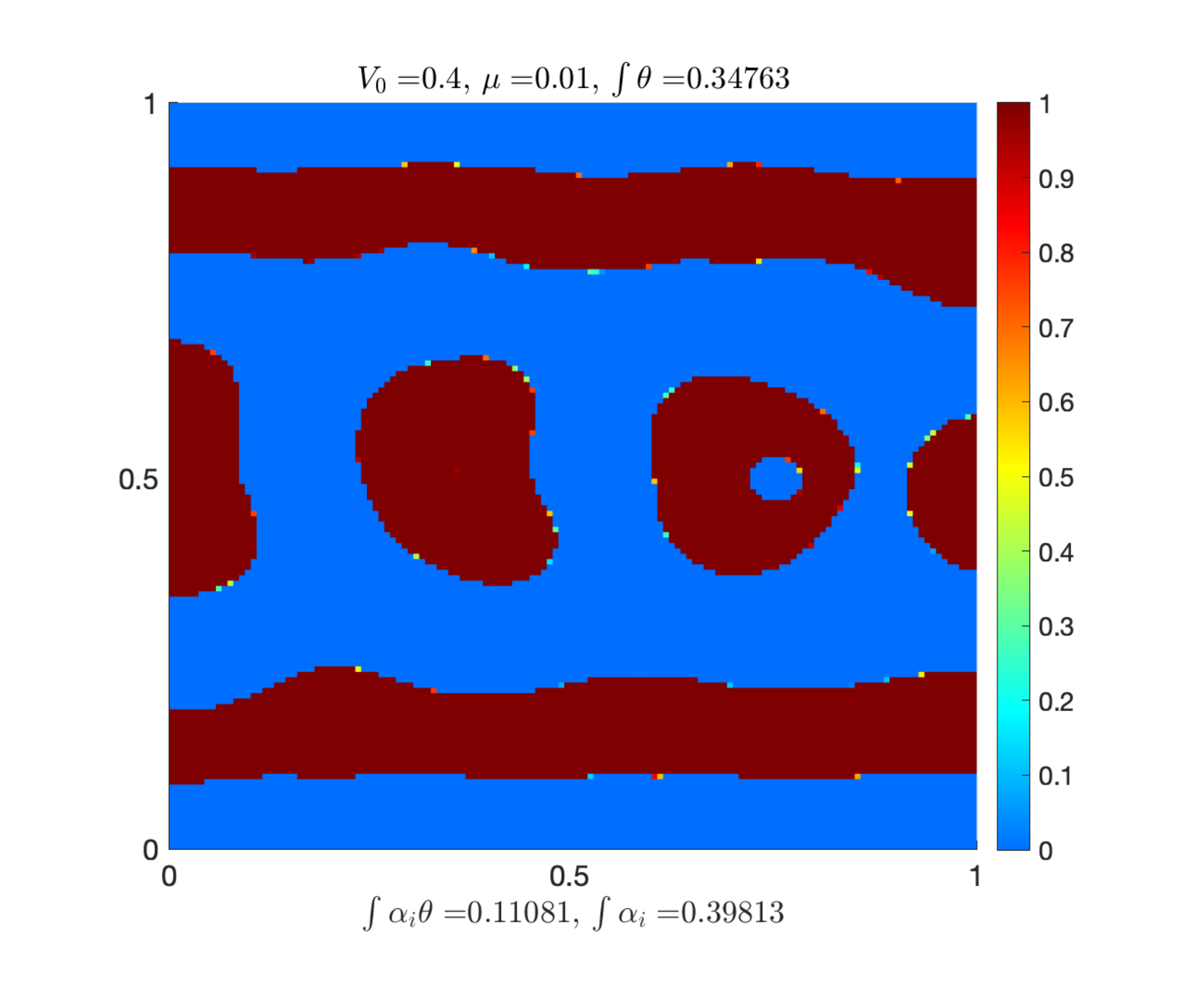}
\includegraphics[scale=0.22]{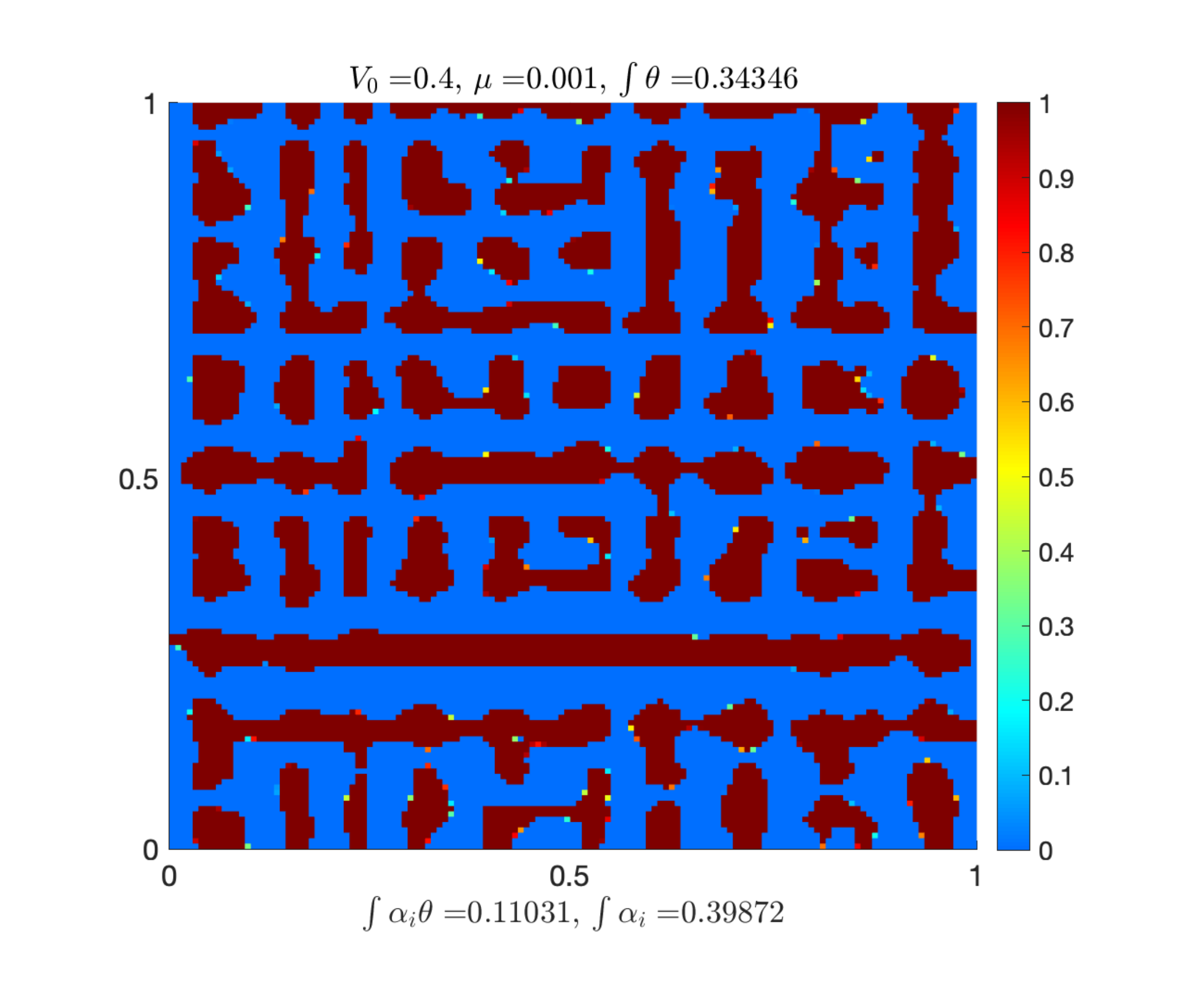}

\includegraphics[scale=0.22]{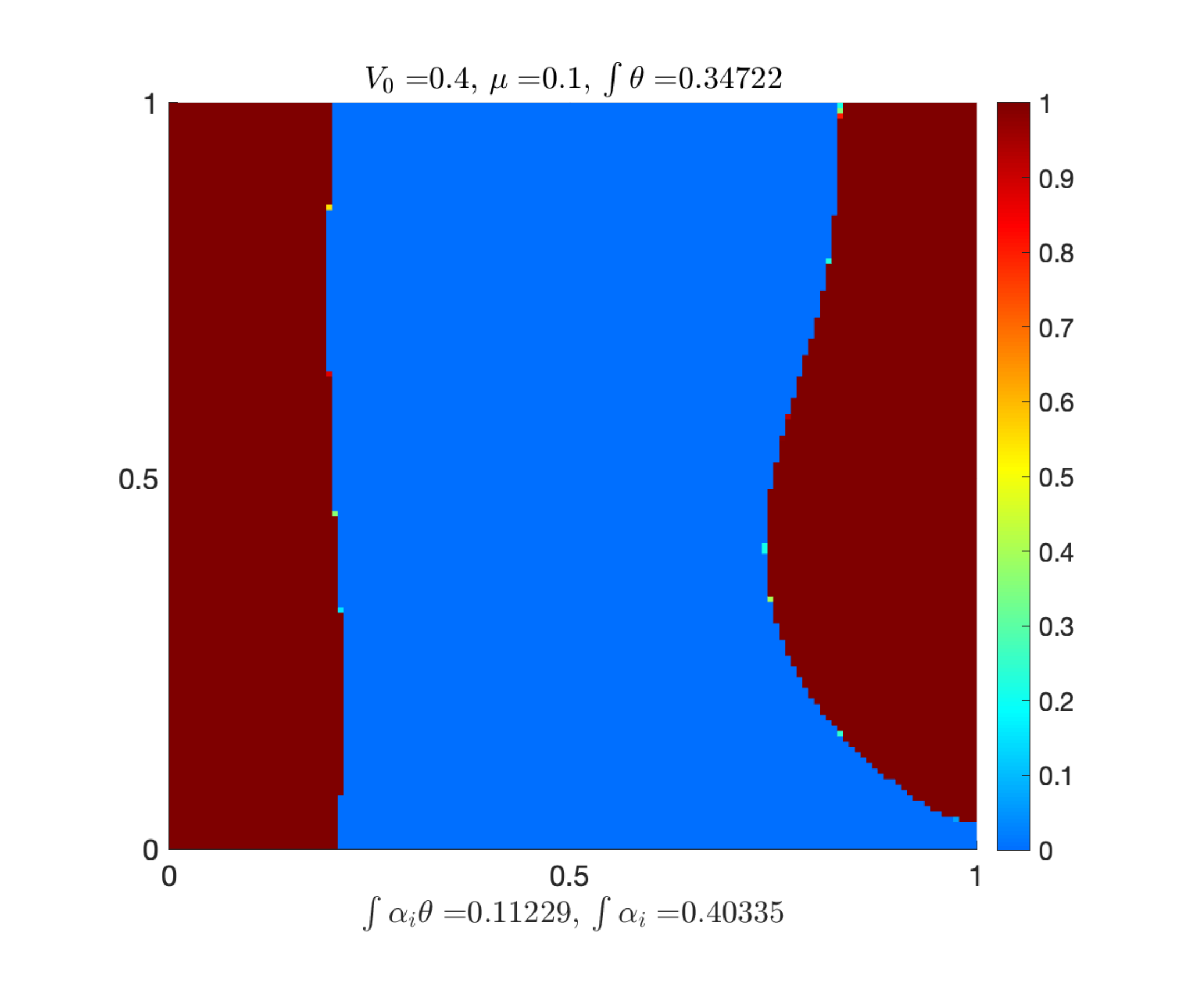}
\includegraphics[scale=0.22]{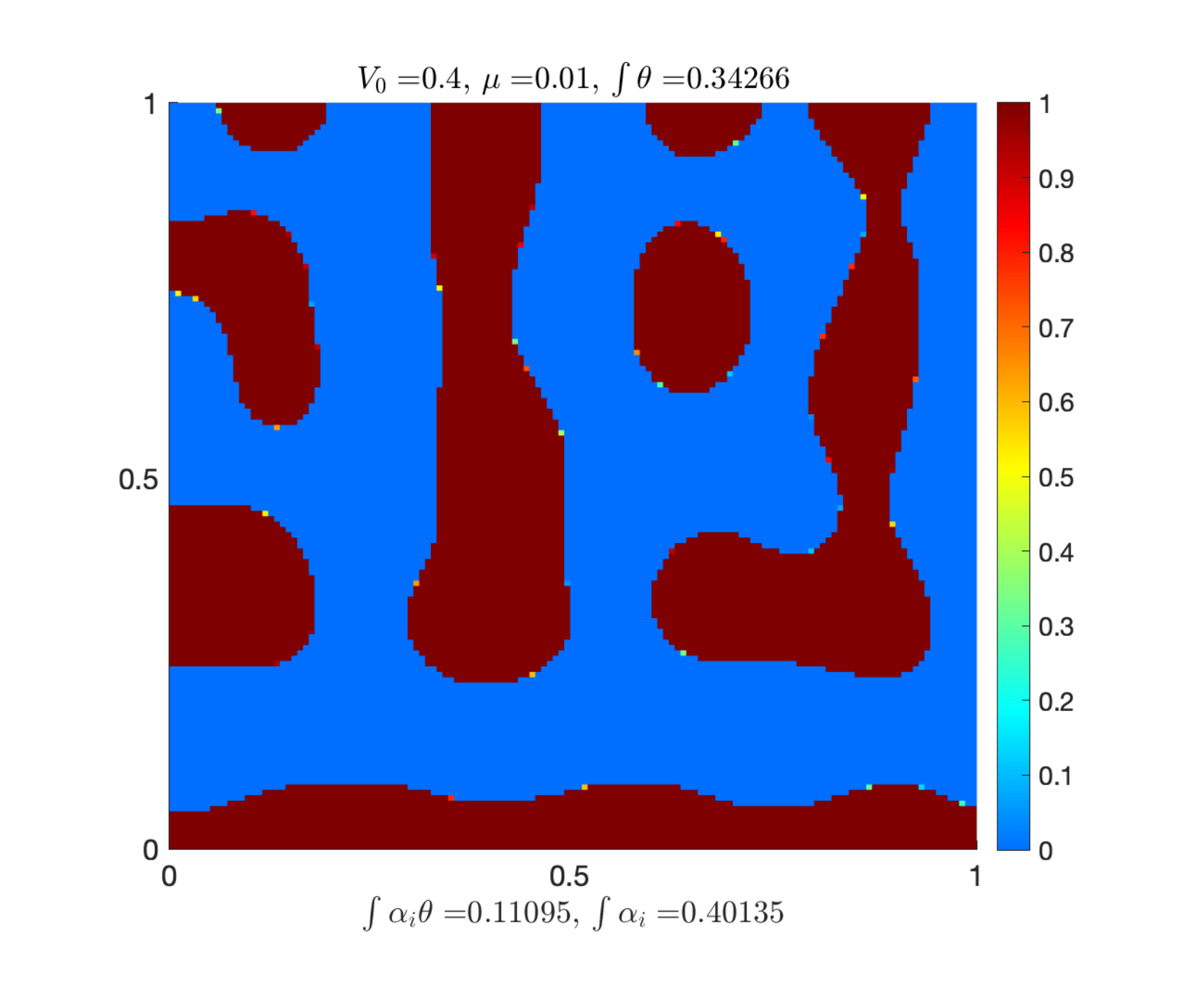}
\includegraphics[scale=0.22]{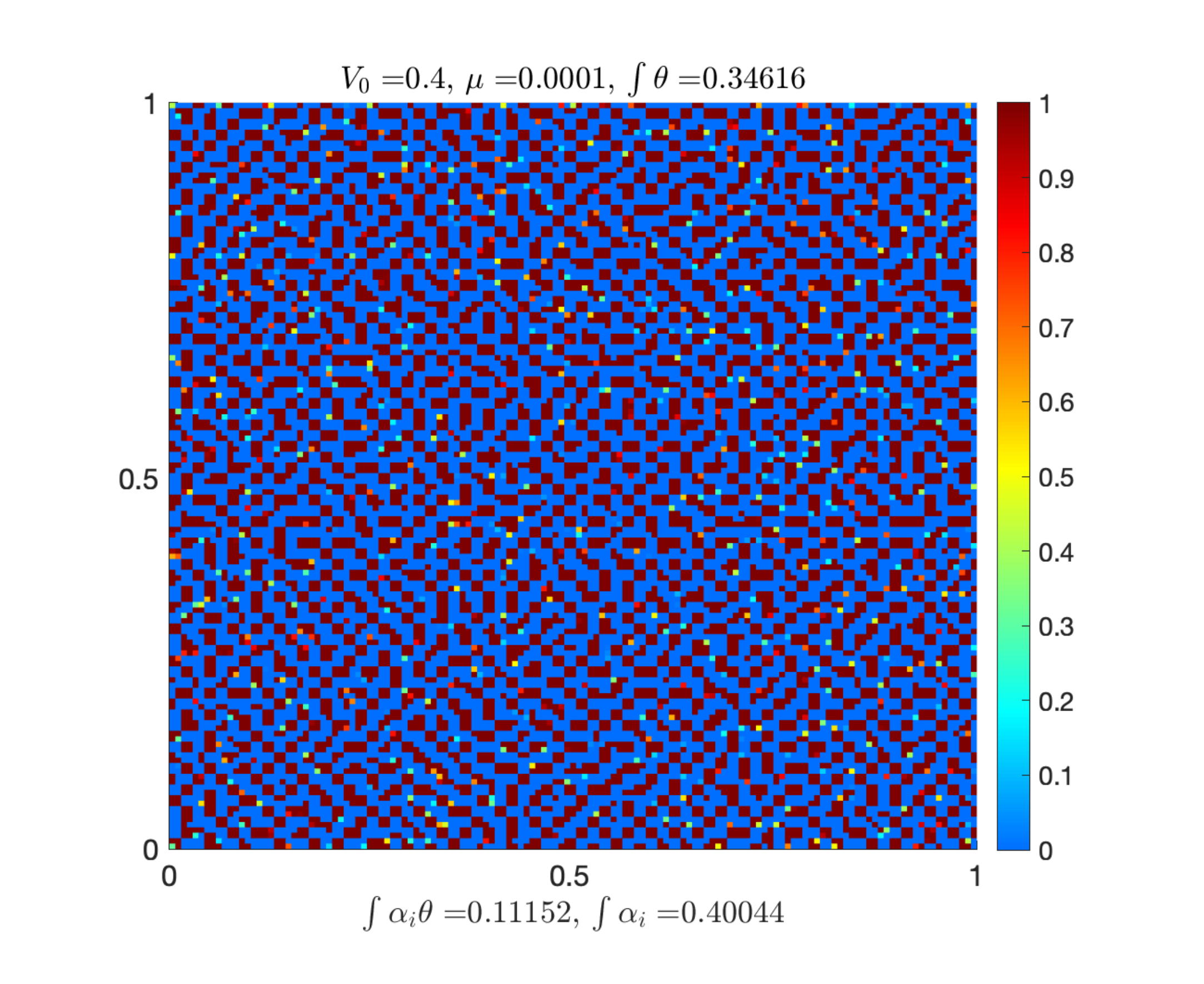}

\end{center}
\caption{In this figure, several simulations following the fixed point Algorithm \ref{algo1} have been performed. In all the simulations, both players play the same strategy, and hence, only one strategy is depicted. $K(x)=1$ was choosen for all simulations.}\label{Fig:2dNah}
\end{figure}

{\color{blue}

In this subsection we perform simulations and discuss the case in which both players have the same fishing capacity, i.e. the game is symmetric. After looking at Figures \ref{Fig:1dNash} and \ref{Fig:2dNah}, one should observe three things:
\begin{enumerate}
\item All simulations have converged to an $\epsilon$-Nash equilibria in which the strategies of both players are the same. Therefore, they suggest that the search of such Nash equilibria can be phrased as finding fixed points for the map
$ \Lambda:L^\infty(\Omega)\longrightarrow L^\infty(\Omega)$
defined as
$$\Lambda(K)=1-\alpha^*_K $$
where by $\alpha^*_K$ is an element of the minimizers of the single player game. Of course, for such map to be well defined, we would need to ensure uniqueness for \eqref{Pv:1Ineq}.

\item When the integral bound $V_0$ was small, the Nash equilibria were not exhibiting a bang-bang structure and instead were constants (for $K=1$, Figure \ref{Fig:1dNash} left column). This is in the line of the concavity insight of Theorems \ref{Th:Concave} and \ref{Th:AsymptoticSingle} in this paper. 

\item When $V_0$ is big, the Algorithm converges, for  both, the one and the 2 dimensional problem to a Nash equilibria that is bang-bang. We have observed that, in the asymptotic regime that there are two Nash equilibria for $V_0=\frac{1}{3}$, $(V_0,V_0)$ and $(\mathbbm{1}_{(0,V_0)},\mathbbm{1}_{(0,V_0)})$ (as a consequence of Theorem \ref{Th:NashLarge}). For every $\mu$,  $(V_0,V_0)$ is a Nash equilibria for the non-linear problem. Why does the algorithm typically prefer other Nash-equilibria instead that show bang-bang structures? But even more fundamental, do bang-bang symmetric Nash equilibria exist for general diffusivities?

\item Simulations in Figure \ref{Fig:2dNah}, also point that in the two dimensional case, there is no uniqueness. For the same diffusivity, and for the same integral bound, two different $\epsilon$-Nash equilibria were found (left and middle columns of Figure \ref{Fig:2dNah}).
\item An apparent fragmentation phenomena as observed in the simulations. The TV semi-norm of the strategies increases as $\mu\to 0^+$. This is a phenomenon observed in the maximisation of the total population \cite{MRBSIAP,Mazari2021}. However, the phenomenology is quite surprising with respect to the previous studies since, in this problem, we are dealing with Nash-equilibria for a game whose pay-offs are different from maximizing the total population. Moreover, when one considers the optimal harvesting problem, for $K(x)=1$ as done in the simulations, the optimal controls do not manifest a fragmentation property when $\mu\to 0$.
\end{enumerate}
}

\subsubsection{Non-symmetric bounds}

{\color{blue}
In this subsection we introduce some asymmetry in the problem by considering different capacities for the players Figures \ref{Fig:nonsym1d} and \ref{Fig:nonsym2d}. We remark the following

\begin{enumerate}
\item As observed in the previous case, when the integral bound is low, the observed $\epsilon$-Nash equilibria is constituted of a pair of constants (left column in Figure \ref{Fig:nonsym1d}). As before, this is a manifestation of Theorem \ref{Th:Concave} and Theorem \ref{Th:NashExists}.

\item In contrast with the symmetric case, we no longer observe a full bang-bang strategy. Both in the one dimensional case in Figure \ref{Fig:nonsym1d} and in the 2 dimensional one, Figure \ref{Fig:nonsym2d}, we observed that the player with higher capacity adopts a bang-bang strategy while the player with less capacity is not showing this feature.

\item Figure \ref{Fig:nonsym1d} also shows that for high integral bound, the players do not necessarly share the supports of their strategies. In contrast, the simulation done in the 2 dimensional case, Figure \ref{Fig:nonsym2d} is not showing this particularity. There, it is observed that the player with less capacity fishes in the same area than the player with higher capacity but at a "less" intensity in some areas.

\item Note that all the comments made for \eqref{Pv:1Ineq} in the previous subsection regarding Figure \ref{Fig:optimalH} apply in the context of Nash equilibria for understanding a posteriori its geometrical properties.

\item Furthermore, we also observe an apparent fragmentation of the Nash equilibria shown for high capacity.

\end{enumerate}

}

\begin{figure}
\begin{center}
\includegraphics[scale=0.2]{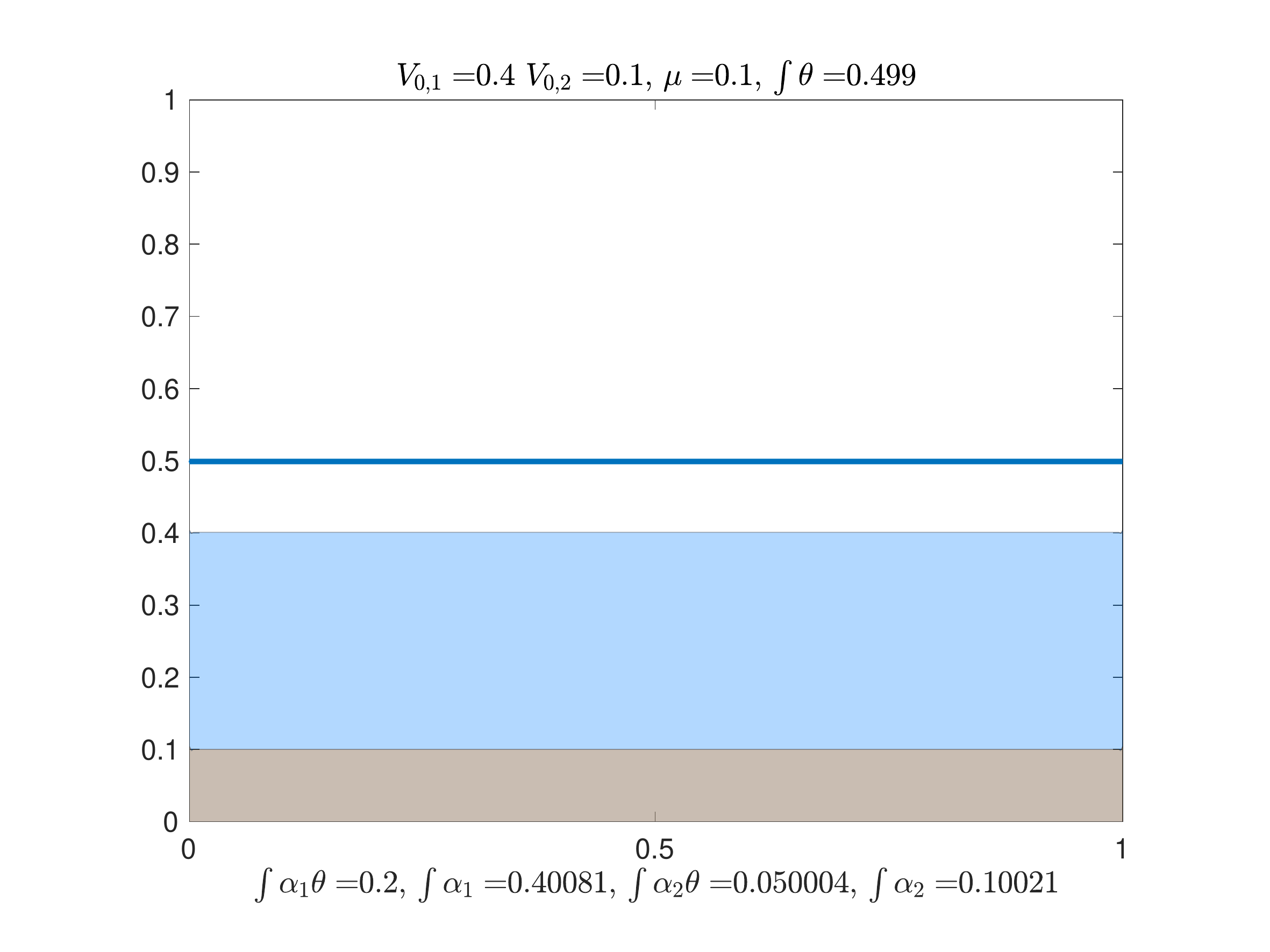}
\includegraphics[scale=0.2]{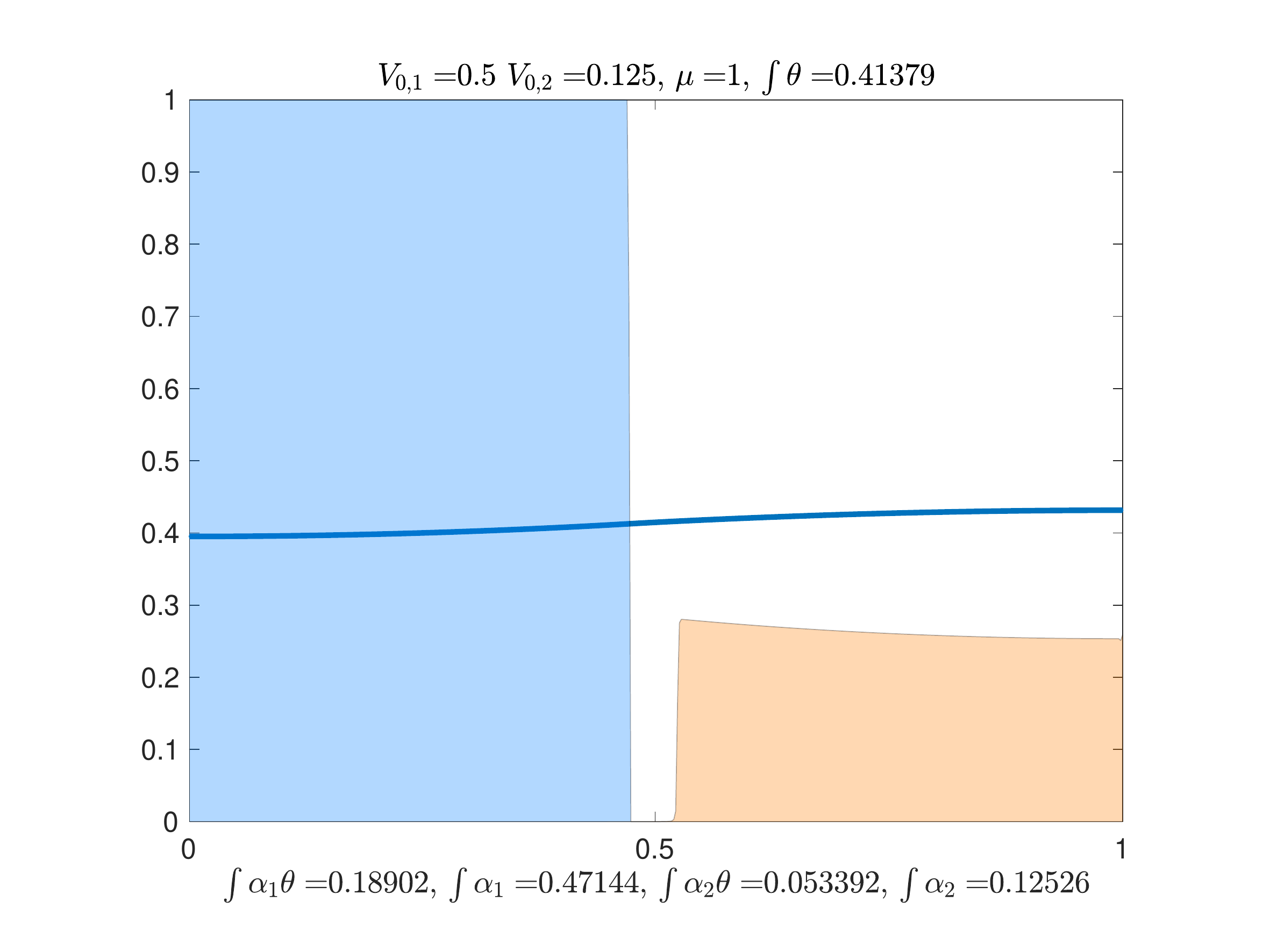}
\includegraphics[scale=0.2]{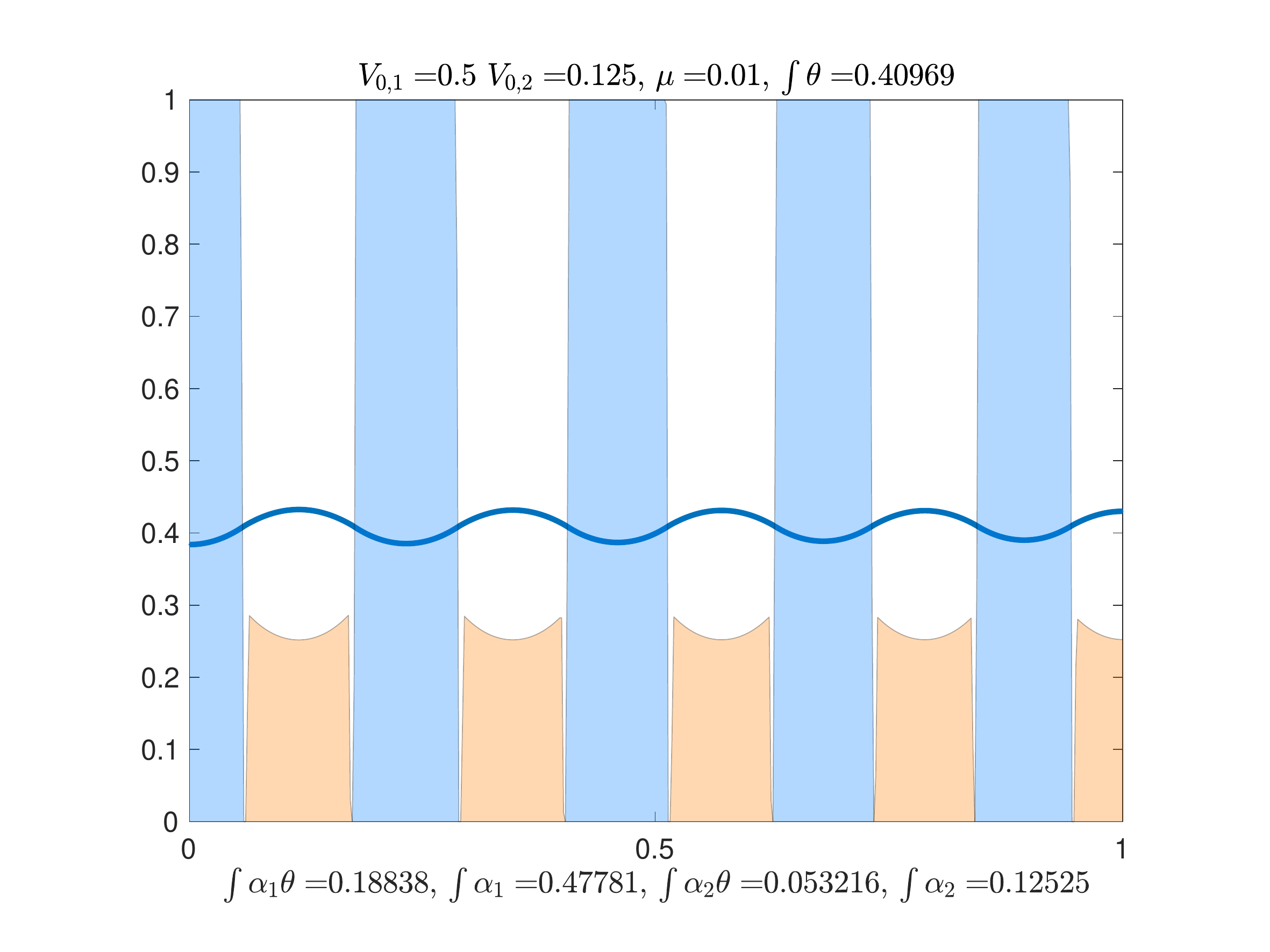}

\includegraphics[scale=0.2]{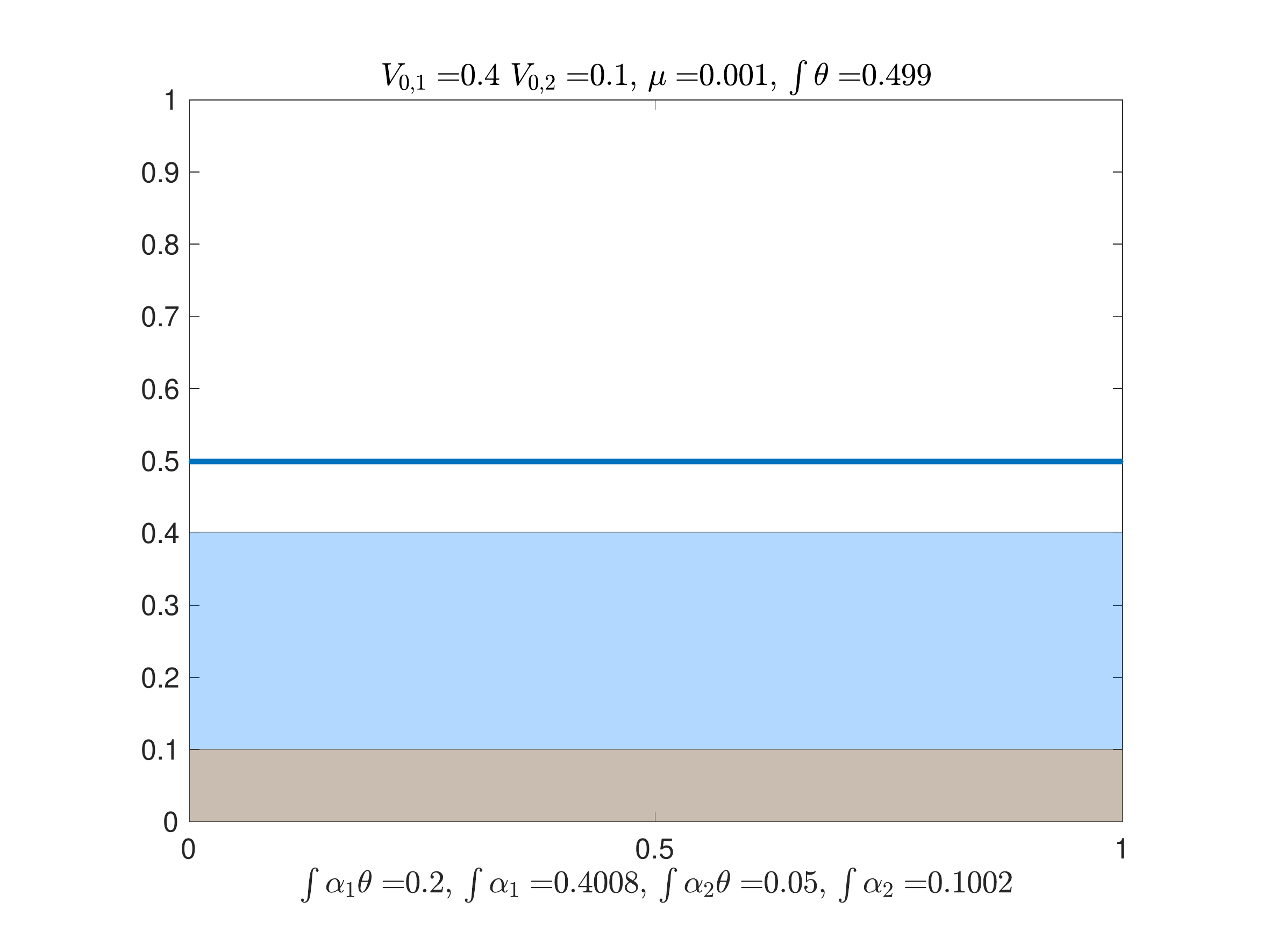}
\includegraphics[scale=0.2]{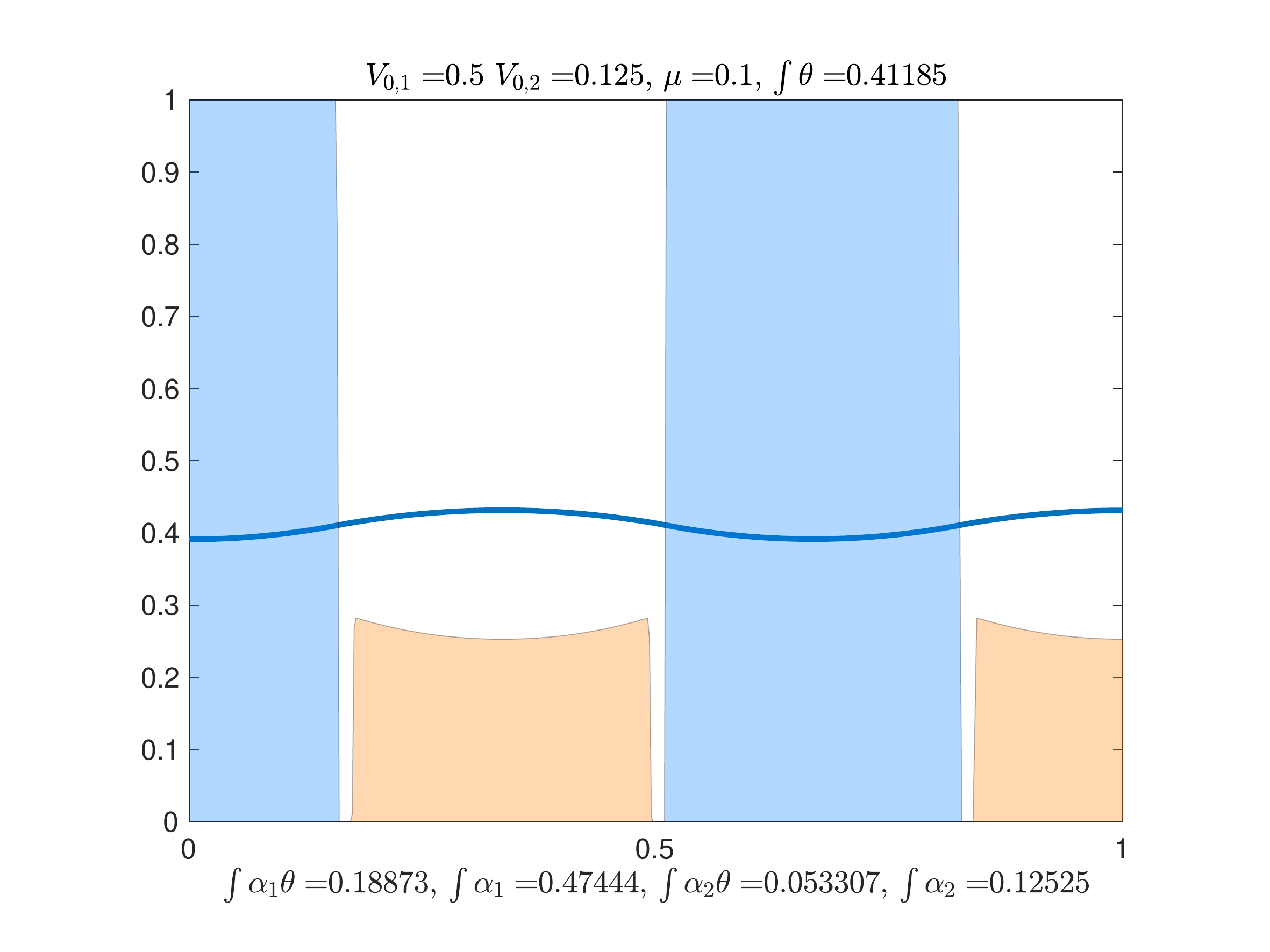}
\includegraphics[scale=0.2]{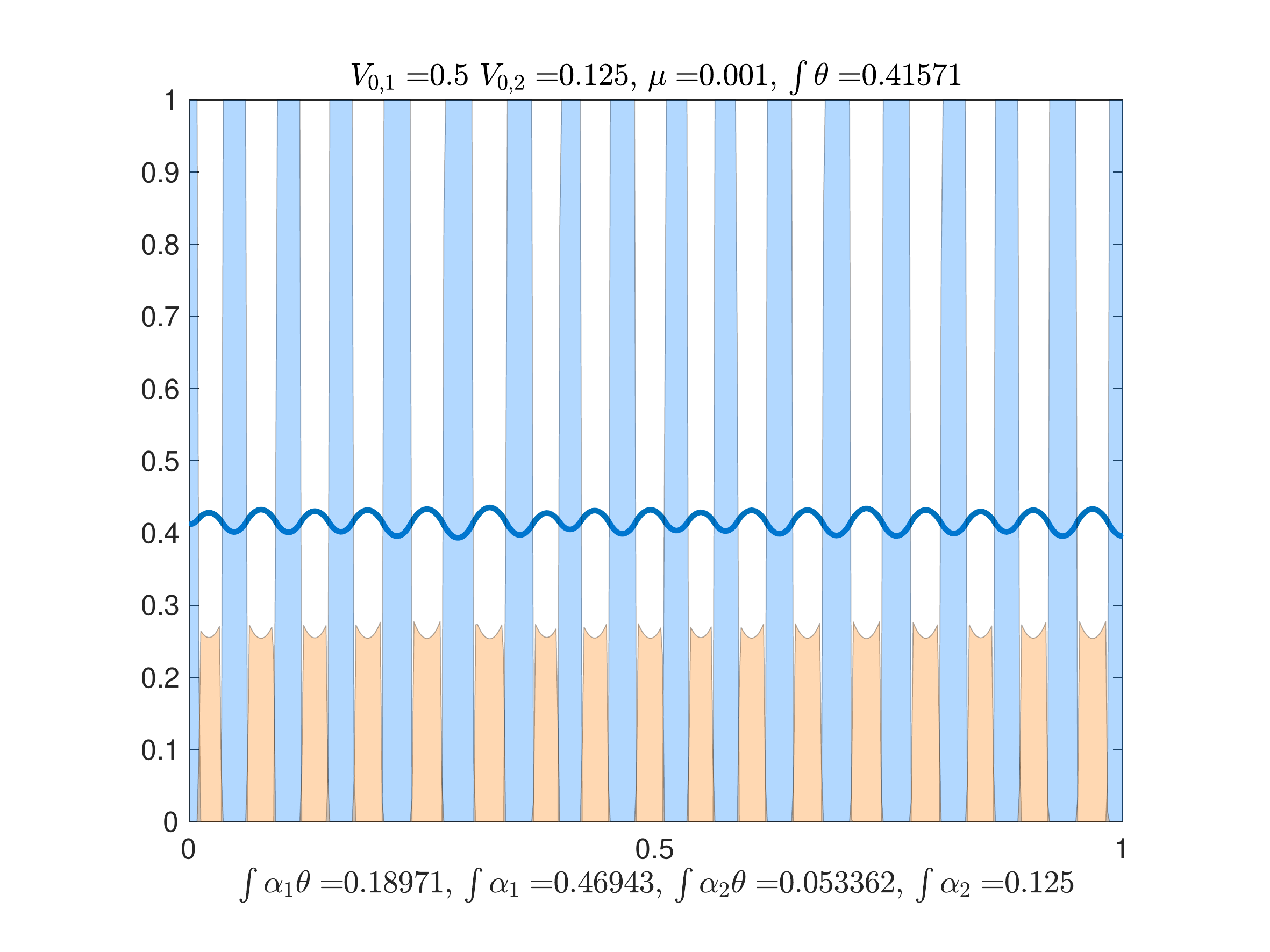}

\end{center}

\caption{The blue line represents the state $\theta_{\alpha_1,\alpha_2}$. The strategy of the first (and second) player, with higher (lower) capacity, has been depicted as a blue (orange) subgraph. The second player has a lower integral bound than the first player. $K(x)=1$.}\label{Fig:nonsym1d}
\end{figure}

\begin{figure}
\begin{center}
\includegraphics[scale=0.3]{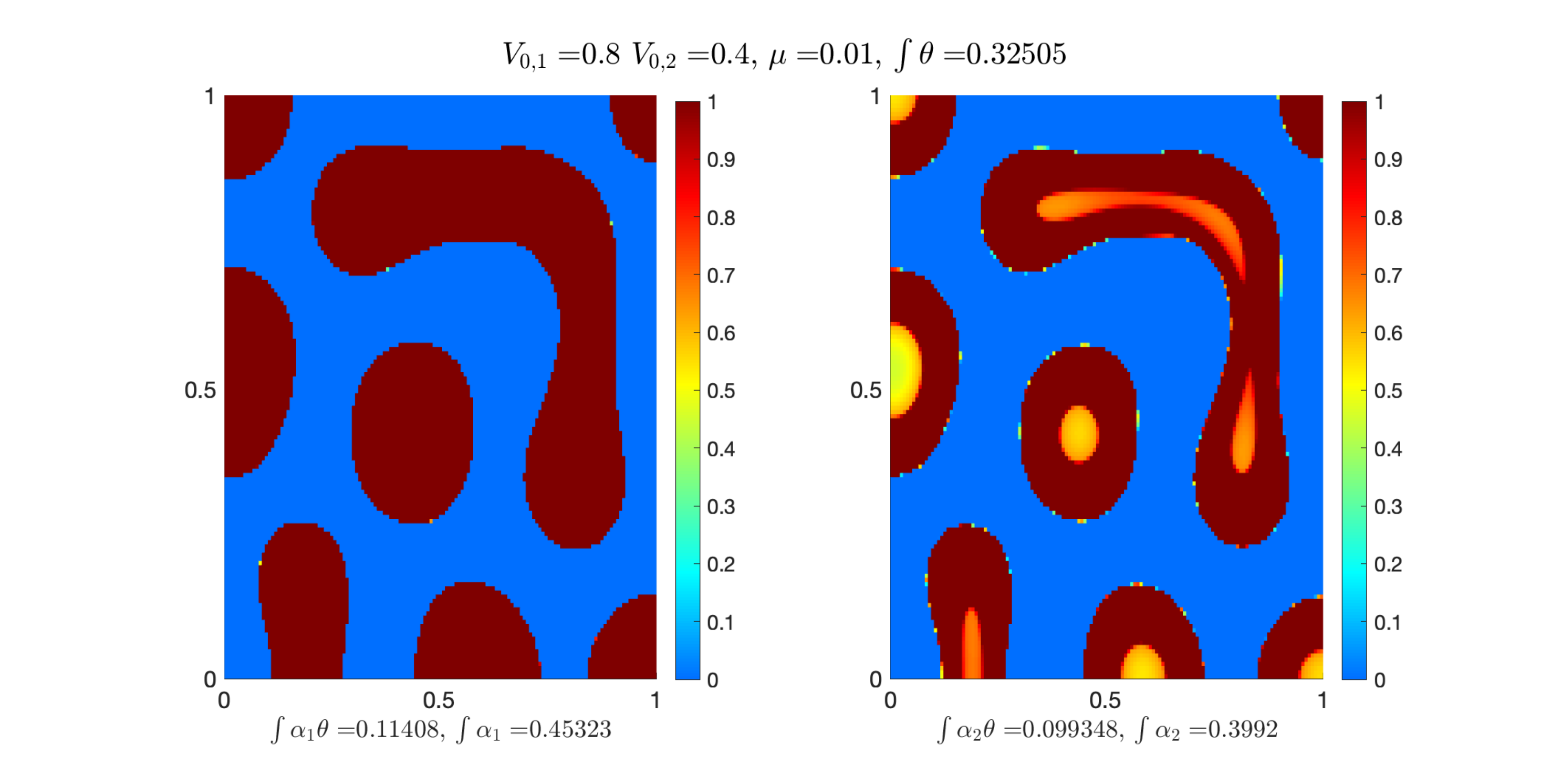}
\end{center}
\caption{At the left the strategy of the first player, at the right the strategy of the second player. The second player has a lower integral bound than the first player. $K(x)=1$.}\label{Fig:nonsym2d}
\end{figure}

\section{Open problems}
\paragraph{Concavity for low fishing abilities in higher dimensions}
One of the main drawbacks of Theorem \ref{Th:Concave} is the fact that it holds only in one-dimension or, in higher dimensions, if the resources distribution $K$ is close to a constant in the $L^1$ norm. As was seen during the proof, the main possibility to derive a result is so far to establish that
\begin{equation}\label{Eq:BHL}1-\frac\mu4\cdot\frac{|\n \overline\theta|^2}{\overline \theta^3}>0\end{equation} for any $K\in \mathcal K(\O)$, $\overline \theta$ being the solution of \eqref{Eq:LFAEq}. In the on-dimensional case this was obtained through an estimate of \cite{BaiHeLi}. In the higher-dimensional setting, however, it is quite likely that there is some serious difficulty in obtaining such an estimate for the following reason: in \cite{InoueKuto}, it is proved that, if we simply assume that $K\geq 0\,, K\neq 0$, the quantity $\sup_{\mu>0\,, K\,, K\neq 0\,, K\geq 0}\fint_\O \overline \theta/\fint_\O K$ is infinite. Should an estimate of the form \eqref{Eq:BHL}, such a result could not be true (as one could then apply the technique of \cite{BaiHeLi} and obtain  $\sup_{\mu,K\,, K\neq 0\,, K\geq 0}\fint_\O \overline \theta/\fint_\O K\leq 4,$ an obvious contradiction. Of course, in constructing a sequence such that the biomass to resources ratio diverges, the authors of \cite{InoueKuto} blow the $L^\infty$ bound up, but the fact that such phenomena occur in higher dimensions indicates the potentially very intricate nature of the problem.

\paragraph{The question of fragmentation for Nash equilibria}
We have also observed a clear fragmentation phenomenon of Nash equilibria in the low diffusivity limits. Building on \cite{MRBSIAP,HeoKim}, is it possible to prove a theorem of the form
\[ \lim_{\mu \to 0^+}\inf_{(\alpha_1^*,\alpha_2^*)\text{ Nash equilibria}}\min\left(\Vert \alpha_1^*\Vert_{BV},\Vert \alpha_2^*\Vert_{BV}\right)=\infty?\]
At this stage, it seems thoroughly unclear how one could approach that question, as this would require a very fine knowledge of the set of all Nash equilibria of the problem. We plan on tackling this question in future works.

{\color{blue}
\paragraph{Optimal Game Regulation Problem}

In this article we have studied several regimes for which Nash equilibria exist. Furthermore, we also illustrated how Nash equilibria lead to an under-performance of resources, in the sense that there are Nash equilibria for which the sum of the pay-offs of the players is strictly lower than what is optimal to fish. This also has been illustrated in the numerical simulations. Behind this lines, there is a relevant problem to be addressed. What is the optimal regulation so that we avoid overfishing as much as possible? 

In Figure \ref{nashvsbound} the total fish harvested is depicted against the volume constraint. One can observe that, for the Nash equilibria found, there is an optimal volume constraint for maximizing the total harvested fish. This allows us to propose an optimal regulation problem for the harvesting problem. Let us first define  the set of all Nash equilibria given a volume constraint $V_0$
$$ \mathcal{N}(V_0):=\left\{(\alpha_1^*,\alpha_2^*)\in \mathcal{M}_{\leq}(\Omega,V_0)\text{ such that } (\alpha_1^*,\alpha_2^*) \text{ is a Nash equilibria}\right\}.$$
Now, the \textit{optimal game regulation problem} for the harvesting game is the maximisation of \textit{the worst} Nash equilibria with respect to $V_0$, mathematically
$$ \max_{V_0} \min_{(\alpha_1,\alpha_2)\in \mathcal{N}(V_0)} \int_\Omega (\alpha_1(x)+\alpha_2(x))\theta_{\alpha_1,\alpha_2}(x) dx$$
where $\theta$ follows \eqref{Igame}. 
 For addressing this problem, it is necessary to characterize all Nash equilibria given a volume constraint $V_0$. In Figure \ref{nashvsbound}, we only used the Nash equilibria found with Algorithm \ref{algo1}, but we do not know if there are other Nash equilibria. 
It is worth noting that, in the case of Figure \ref{nashvsbound} ($K(x)=1$), it would be sufficient if one could prove that the unique Nash equilibria for $V_0=0.25$ is  $\alpha_1(x)=V_0$, $\alpha_2(x)=V_0$.

\begin{figure}
\centering
\includegraphics[scale=0.25]{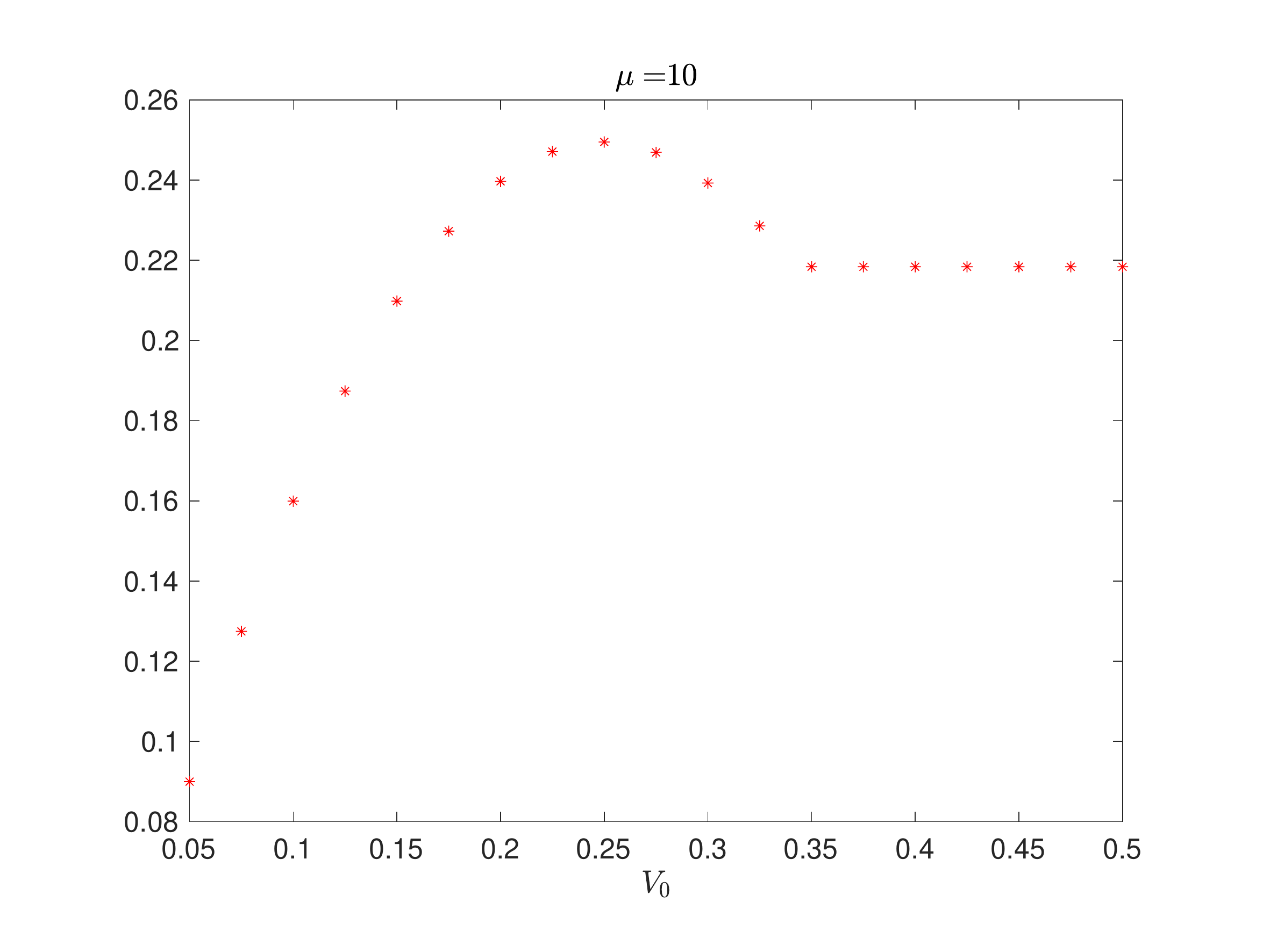}
\caption{In the vertical axis, the sum of the total fish harvested for both players at the Nash equilibria found, in the horizontal axis the volume constraint for both players. $K(x)=1$. }\label{nashvsbound}
\end{figure}
}

\bibliographystyle{abbrv}
\bibliography{BiblioNash}

\begin{appendix}
\section{Proof of technical results}
\subsection{Proof of \eqref{Eq:Ode}}\label{Ap:Ode}
\begin{proof}[Proof of \eqref{Eq:Ode}]
We argue by contradiction. Thus there exists $\eta>0$, a sequence $\{V_k\}_{k\in \N}$ converging to 0, and, for any $k\in \N$, there exists $\alpha_k\in \mathcal M_\leq(\kappa,V_k)$ such that 
\[ \left|\lambda(K-\alpha_k-2\theta_{\alpha_k})-\lambda(K-2\overline \theta)\right|\geq \eta.\] For the sake of simplicity, define 
\[ \lambda_k:=\lambda(K-\alpha_k-2\theta_{\alpha_k}).\]
For any $k\in \N$, define $\p_k$ as  the principal eigenfunction of $-\mu \Delta-(K-\alpha_k-2\theta_{\alpha_k})$. Up to renormalisation we may thus assume that $\p_k$ satisfies
\begin{equation}\label{Eq:PhiK}
\begin{cases}
-\mu \Delta \p_k=(K-\alpha_k-2\theta_{\alpha_k})\p_k+\lambda_k\p_k&\text{ in }\O\,, 
\\ \frac{\partial \p_k}{\partial \nu}=0&\text{ on }\partial \O\,, 
\\ \p_k>0\text{ in }\O\,, \int_\O \p_k^2=1.\end{cases}\end{equation} Let $V_k=K-\alpha_k-2\theta_{\alpha_k}.$ By the maximum principle there exists $M_0$ such that
\[ \forall k\in \N\,, \Vert V_k\Vert_{L^\infty(\O)}\leq M_0\] whence we derive that there exists $M_1\in \R$ such that
\[ \sup_{k\in \N}|\lambda_k|\leq M_1.\]
In particular, by the weak formulation of \eqref{Eq:PhiK} there exists $M_2\in \R$ such that 
\[ \sup_{k\in \N}\Vert \p_k\Vert_{W^{1,2}(\O)}\leq M_2.\]Let $\lambda_\infty$ be a closure point of $\{\lambda_k\}_{k\in \N}$ and $\p_\infty$ be a (weak $W^{1,2}$, strong $L^2$) closure point of $\{\p_k\}_{k\in \N}$. As 
\[V_k\underset{k\to \infty}{\overset{L^2(\O)}\longrightarrow }K-2\overline \theta.\] Passing to the limit in the weak formulation of \eqref{Eq:PhiK}, as well as in the normalisation conditions, we obtain, on $\p_\infty$, the equation 
\begin{equation*}
\begin{cases}
-\mu \Delta =(K-2\overline \theta)\p_\infty+\lambda_\infty\p_\infty&\text{ in }\O\,, 
\\ \frac{\partial \p_\infty}{\partial\nu}=0&\text{ on }\partial \O\,, 
\\ \p_\infty\geq 0\text{ in }\O\,, \int_\O \p_\infty^2=1.\end{cases}\end{equation*} It thus appears that $\p_\infty$ is a constant-sign eigenfunction of the operator $-\mu \Delta-(K-2\overline \theta)$. As the first eigenfunction of an operator is the only one having constant sign we deduce that $\p_\infty$ is a first eigenfunction of $-\mu \Delta-(K-2\overline \theta)$, so that $\lambda_\infty=\lambda(K-2\overline \theta)$. As $\lambda_\infty=\lim_{k\to \infty}\lambda(K-\alpha_k-2\theta_{\alpha_k})$, this is a contradiction.

\end{proof}
\subsection{Proof of \eqref{Eq:CvEi1}}\label{Ap:ConvEigen}
\begin{proof}[Proof of \eqref{Eq:CvEi1}]
We argue by contradiction and assume that \eqref{Eq:CvEi1} does not hold. In particular there exists $\eta>0$, a sequence $\{V_{0,k}\}_{k\in \N}$ converging to zero and, for any $k\in \N$, $\alpha_k\in \mathcal M_\leq(\kappa,V_{0,k})$ such that 
\[\forall k\in \N\,, \left|\xi(\alpha_k)-\overline\xi \right|\geq \eta>0.\]
As $W_\alpha$ is uniformly bounded in $L^\infty(\O)$ for $V_0$ small enough from \eqref{Eq:CvUni}-\eqref{Bloodborne}, the sequence $\{\xi(\alpha_k)\}_{k\in \N}$ is uniformly bounded, say by a constant $M_0>0$:
\[ \forall k \in \N\,, \vert \xi(\alpha_k)\vert\leq M_0\] and thus, up to extracting a subsequence, it converges to some $\xi^*$.

In turn this implies that, if we define, for any $k\in \N$, the normalised eigenfunction $\p_k$ as the solution of 
\[
\begin{cases}
-\mu\nabla \cdot\left(\frac{|\n \p_k|}{\theta_{\alpha_k}}\right)-W_{\alpha_k}\p_k=\xi(\alpha_k)\p_k&\text{ in }\O\,, 
\\ \p_k\geq 0\,, \fint_\O \p_k^2=1\,, 
\\ \frac{\partial \p_k}{\partial \nu}=0&\text{ on }\partial \O,
\end{cases}
\]
the estimate \eqref{Eq:CvUni} and the fact that $\inf_{\overline \O}\overline\theta>0$ yield the existence of a constant $M_1$ such that 
\[ \forall k\in \N\,, \Vert \p_k\Vert_{W^{1,2}(\O)}\leq M_1.\]
We can thus extract a converging  (weakly in $W^{1,2}(\O)$, strongly in $L^2(\O)$) subsequence of $\{\p_k\}_{k\in \N}$, and, without relabelling we assume that the entire sequence thus converges to a $\p^*\in W^{1,2}(\O)$. Passing to the limit in the normalisation conditions provides us with 
\[ \p^*\geq 0\text{ in }\O\,, \fint_\O (\p^*)^2=1.\] Since $W_{\alpha_k}\underset{k\to \infty}\rightarrow \overline W$ strongly (in particular, weakly) in $L^2(\O)$ we finally obtain, passing to the limit in the eigenequation, that $\p^*$ solves
\[
\begin{cases}
-\mu \n\cdot\left(\frac{\n \p^*}{\overline \theta}\right)-\overline W\p^*=\xi^*\p^*&\text{ in }\O\,, 
\\ \frac{\partial \p^*}{\partial \nu}=0&\text{ on }\partial \O\,, 
\\ \p^*\geq 0\,, \fint_\O (\p^*)^2=1.
\end{cases}
\]
In particular, $\p^*$ is a positive eigenfunction of $\overline L$. As an eigenfunction of $\overline L$ has a constant sign if, and only if, it corresponds to the first eigenvalue, we deduce that $\xi^*=\overline \xi$, a contradiction.
\end{proof}

\subsection{Proof of convergence to an $\epsilon$-Nash equilibria}
{\color{blue}
\begin{proposition}\label{prop-enash}
Algorithm \ref{algo1} with \texttt{tol}$=\epsilon>0$, in case of convergence it converges to an $\epsilon^{1/3}$-Nash equilibria.
\end{proposition}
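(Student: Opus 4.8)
The plan is to exploit the two best-response identities that Algorithm \ref{algo1} produces at the step where the stopping criterion is triggered, and to combine them with a quantitative continuity estimate for the state equation. Suppose the algorithm stops at iteration $k$ and set $(\alpha_1^*,\alpha_2^*):=(\alpha_1^{(k)},\alpha_2^{(k)})$. By construction of the recursion, $\alpha_1^{(k)}$ is a best response of the first player against $\alpha_2^{(k-1)}$ and $\alpha_2^{(k)}$ a best response of the second player against $\alpha_1^{(k-1)}$, that is,
\begin{equation}\label{Eq:BRk}
 I_1(\alpha_1^{(k)},\alpha_2^{(k-1)})\geq I_1(\alpha,\alpha_2^{(k-1)}) \quad\text{and}\quad I_2(\alpha_1^{(k-1)},\alpha_2^{(k)})\geq I_2(\alpha_1^{(k-1)},\alpha)
\end{equation}
for every $\alpha\in \mathcal M_\leq(\kappa,V_0)$, while the stopping criterion gives $\|\alpha_i^{(k)}-\alpha_i^{(k-1)}\|_{L^2(\Omega)}\leq \epsilon$ for $i=1,2$. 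The crucial observation is that the optimality in \eqref{Eq:BRk} is phrased against the \emph{previous} opponent strategy, which the stopping criterion makes $L^2$-close to the current one; hence everything reduces to controlling the variation of a payoff when the opponent's control is moved by $\epsilon$ in $L^2$.

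The key estimate I would establish is a Hölder continuity of the payoffs with respect to the opponent's control. Observing that $\theta_{\beta,\gamma}$ solves the single-species logistic equation $-\mu\Delta\theta=\theta\big((K-\beta)-\gamma-\theta\big)$, moving $\gamma$ merely perturbs the harvesting term of a logistic-diffusive equation with resources $K-\beta$. I would then argue that the estimate \cite[Eq.~(2.4)]{LouInfluence}, already used in the proof of Lemma \ref{Le:LFA1}, adapts (replacing the reference harvesting $0$ by an arbitrary admissible one and $K$ by $K-\beta$) to furnish a constant $c_0=c_0(\mu,\Omega,\kappa)$ with, uniformly in the admissible $\beta,\gamma,\gamma'$,
\begin{equation}\label{Eq:HolderState}
 \|\theta_{\beta,\gamma}-\theta_{\beta,\gamma'}\|_{L^1(\Omega)}\leq c_0\,\|\gamma-\gamma'\|_{L^1(\Omega)}^{1/3}.
\end{equation}
Since $\|\alpha_1\|_{L^\infty}\leq \kappa$ and the opponent's control enters $I_1$ only through the state, \eqref{Eq:HolderState} would give, uniformly in the admissible $\alpha_1$,
\begin{equation}\label{Eq:PayoffCont}
 |I_1(\alpha_1,\gamma)-I_1(\alpha_1,\gamma')|=\Big|\fint_\Omega \alpha_1\big(\theta_{\alpha_1,\gamma}-\theta_{\alpha_1,\gamma'}\big)\Big|\leq \frac{\kappa\,c_0}{\mathrm{Vol}(\Omega)}\,\|\gamma-\gamma'\|_{L^1(\Omega)}^{1/3},
\end{equation}
and symmetrically for $I_2$. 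Together with the bound $\|\cdot\|_{L^1(\Omega)}\leq \mathrm{Vol}(\Omega)^{1/2}\|\cdot\|_{L^2(\Omega)}$ and the stopping criterion, this produces a constant $C=C(\mu,\Omega,\kappa)$ such that $|I_1(\alpha_1,\alpha_2^{(k)})-I_1(\alpha_1,\alpha_2^{(k-1)})|\leq C\epsilon^{1/3}$ uniformly in $\alpha_1$, and likewise for $I_2$.

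I would then conclude by chaining these inequalities: for any $\alpha\in \mathcal M_\leq(\kappa,V_0)$, using \eqref{Eq:PayoffCont} at $\alpha_1^{(k)}$, then the optimality \eqref{Eq:BRk}, then \eqref{Eq:PayoffCont} at $\alpha$,
\begin{align*}
 I_1(\alpha_1^{(k)},\alpha_2^{(k)})&\geq I_1(\alpha_1^{(k)},\alpha_2^{(k-1)})-C\epsilon^{1/3}\\
 &\geq I_1(\alpha,\alpha_2^{(k-1)})-C\epsilon^{1/3}\geq I_1(\alpha,\alpha_2^{(k)})-2C\epsilon^{1/3},
\end{align*}
and symmetrically $I_2(\alpha_1^{(k)},\alpha_2^{(k)})\geq I_2(\alpha_1^{(k)},\alpha)-2C\epsilon^{1/3}$, so that $(\alpha_1^*,\alpha_2^*)$ is, up to a multiplicative constant depending only on $(\mu,\Omega,\kappa)$, an $\epsilon^{1/3}$-Nash equilibrium.

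The hard part will be the justification of \eqref{Eq:HolderState}: one must verify that the comparison argument behind \cite[Eq.~(2.4)]{LouInfluence}, which compares a harvested steady state with the unharvested one, carries over to a comparison of two harvested steady states sharing the same resources $K-\beta$, and that the resulting constant is uniform over admissible parameters. This uniformity rests on the maximum-principle bound \eqref{Eq:Estima} on $\theta$, and it is precisely this $L^1$--$1/3$ modulus of continuity of the state map that is responsible for the exponent $1/3$ in the statement; the remaining manipulations are elementary.
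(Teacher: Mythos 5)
Your argument is correct and rests on exactly the same two ingredients as the paper's proof: the best-response property of the iterates produced by Algorithm \ref{algo1} together with the $L^1$--$1/3$ H\"older continuity of the state map adapted from \cite[Eq.~(2.4)]{LouInfluence}, which is also where the exponent $1/3$ comes from in the paper. The only (immaterial) difference is bookkeeping: the paper compares $I_1(\alpha_1^{(k+1)},\alpha_2^{(k)})$ with $I_1(\alpha_1^{(k)},\alpha_2^{(k)})$, so the perturbed control is the player's own and the decomposition carries an extra linear term $\int\theta\,(\alpha_1^{(k+1)}-\alpha_1^{(k)})$, whereas you perturb the opponent's control and chain through the best response at step $k$, picking up a factor $2$ in the constant.
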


\begin{proof}
Assume that one has set the tolerance of the algorithm up to $\epsilon>0$ and that the algorithm has converged. Then one has that
\begin{align*} 
I_1(\alpha_1^{k+1},\alpha_2^k) -&I_1(\alpha_1^k,\alpha_2^k)=\int_{\Omega} \alpha_1^{k+1}\theta_{\alpha_1^{k+1},\alpha_2^{k}}dx-\int_\Omega \alpha_1^{k} \theta_{\alpha_1^{k},\alpha_2^{k}}dx \\
&=\int_{\Omega} \alpha_1^{k+1}\theta_{\alpha_1^{k+1},\alpha_2^{k}}dx-\int_{\Omega} \alpha_1^{k+1}\theta_{\alpha_1^{k},\alpha_2^{k}}dx+\int_{\Omega} \alpha_1^{k+1}\theta_{\alpha_1^{k},\alpha_2^{k}}dx-\int_\Omega \alpha_1^{k} \theta_{\alpha_1^{k},\alpha_2^{k}}dx\\
&=\int_\Omega \alpha^{k+1}\left(\theta_{\alpha_1^{k+1},\alpha_2^{k}}-\theta_{\alpha_1^{k},\alpha_2^{k}}\right)dx+\int_\Omega \theta_{\alpha_1^{k},\alpha_2^{k}}\epsilon(x)dx
\end{align*}
where $\epsilon(x)$ is a function in $L^1$ such that $\|\epsilon(x)\|_{L^1}\leq \epsilon$. Then, by adapting the estimate obtained in \cite[Equation 2.4]{LouInfluence} one has the follow estimate
$$\forall (\alpha,\alpha')\in \mathcal{M}(\Omega)^2\quad \|\theta_{\alpha',\alpha_2^{k}}-\theta_{\alpha,\alpha_{2}^k}\|_{L^1}\leq C \| \alpha-\alpha'\|_{L^1}^{1/3} $$
where $C$ is independent of $\mu$ and $\alpha$. This allows us to state
$$I_1(\alpha_1^{k+1},\alpha_2^k) -I_1(\alpha_1^k,\alpha_2^k)\leq C\epsilon^{1/3}$$
The same argument applies for the second player and with that the proposition is proved.
\end{proof}
}
\end{appendix}
\end{document}